\theoremstyle{change}
\newtheorem{theo}{Theorem}[section]
\newtheorem*{theo*}{Theorem}
\newtheorem{lemma}[theo]{Lemma}
\newtheorem{corollary}[theo]{Corollary}
\newtheorem{conj}[theo]{Conjecture}
\newtheorem*{conj*}{Conjecture}
\newtheorem{rem}[theo]{Remark}
\newtheorem{proposition}[theo]{Proposition}  
\numberwithin{equation}{section}
\newcommand{\ir}{\mathrm{r}} 
\newcommand{\cR}{\mathrm{R}} 
\newcommand{\inte}{\mathrm{int}\,}  
\newcommand{\conv}{\mathrm{conv}\,} 
\DeclareMathOperator{\lin}{lin}
\newcommand{\vol}{\mathrm{vol}} 
\newcommand{\grass}{\mathcal{G}} 
\newcommand{\Lat}{\mathcal{L}} 
\newcommand{\Kn}{{\mathcal K}^n} 
\newcommand{\R}{\mathbb{R}} 
\newcommand{\Z}{\mathbb{Z}} 
\newcommand{\disuni}{\mathbin{\setbox0\hbox{$\bigcup$}\rlap{\copy0}\raise.3%
  \ht0\hbox to \wd0{\hfil$\cdot$\hfil}}}
\newcommand{\ip}[2]{\left\langle #1,#2\right\rangle}
\newcommand{\dual}[1]{{[#1]}^\star}
\newcommand{\ov}{\overline}
\newcommand{\ban}{B_n}
\newcommand{\trans}{\intercal}
\newcommand{\va}{{\boldsymbol a}}
\newcommand{\vb}{{\boldsymbol b}}
\newcommand{\ve}{{\boldsymbol e}}
\newcommand{\vr}{{\boldsymbol r}}
\newcommand{\vs}{{\boldsymbol s}}
\newcommand{\vt}{{\boldsymbol t}}
\newcommand{\vu}{{\boldsymbol u}}
\newcommand{\vv}{{\boldsymbol v}}
\newcommand{\vw}{{\boldsymbol w}}
\newcommand{\vx}{{\boldsymbol x}}
\newcommand{\vy}{{\boldsymbol y}}
\newcommand{\vz}{{\boldsymbol z}}
\newcommand{\vnull}{{\boldsymbol 0}}
\newcommand{\suk}{\mathrm{h}}
\newcommand{\vbeta}{{\boldsymbol \beta}}
\begin{document}

\title{Packing minima and lattice points in convex bodies} 

\author{Martin Henk}
\address{Institut f\"ur Mathematik\\
  Technische Universit\"at Berlin\\
  Stra\ss e des 17. Juni 136\\
  D-10623 Berlin\\
  Germany}
\email{henk@math.tu-berlin.de}

\author{Matthias Schymura}
\address{Institut f\"ur Mathematik\\
  BTU Cottbus-Senftenberg\\
  Platz der Deutschen Einheit 1\\
  D-03046 Cottbus\\
  Germany}
\email{schymura@b-tu.de}

\author{Fei Xue}
\address{Institut f\"ur Mathematik\\
  Technische Universit\"at Berlin\\
  Stra\ss e des 17. Juni 136\\
  D-10623 Berlin\\
  Germany}
\email{xue@math.tu-berlin.de}

\date{\today}


\begin{abstract}
Motivated by long-standing conjectures on the discretization of classical inequalities in the Geometry of Numbers, we investigate a new set of parameters, which we call \emph{packing minima}, associated to a convex body $K$ and a lattice $\Lambda$.
These numbers interpolate between the successive minima of $K$ and the inverse of the successive minima of the polar body of $K$, and can be understood as packing counterparts to the covering minima of Kannan \& Lov\'{a}sz (1988).

As our main results, we prove sharp inequalities that relate the volume and the number of lattice points in~$K$ to the sequence of packing minima.
Moreover, we extend classical transference bounds and discuss a
natural class of examples in detail.
\end{abstract}

\maketitle

\section{Introduction}

%
%
%
%
%
%
%
%
%
%
%
%
%
%

Let $\Kn$ be the set of all \emph{convex bodies}, that is, compact convex sets in the $n$-dimensional Euclidean space $\R^n$ with non-empty interior,
and let $\Lat^n$ be the set of all $n$-dimensional \emph{lattices}
$\Lambda \subseteq \R^n$. Here, a lattice $\Lambda\in\Lat^n$ is understood
as the linear image of the standard lattice $\Z^n$, that is, there exists
an invertible matrix $B \in \R^{n \times n}$ such that
$\Lambda=B\Z^n$. The matrix~$B$, or more precisely the set of its
column vectors, is called a  basis of~$\Lambda$ and $|\det(B)|$ is called the determinant of $\Lambda$, which will be denoted by $\det(\Lambda)$.  
 
One of the classical functionals in the Geometry of Numbers is the {\em covering radius} $\mu(K,\Lambda)$ defined for $K \in \Kn$ and
$\Lambda\in\Lat^n$ as the smallest positive dilation factor $\mu$
such that $\Lambda$ is a \emph{covering lattice} of $\mu\,K$, that is,  
\begin{equation}
  \label{eq:coveringradius}
    \mu(K,\Lambda)=\min\{\mu>0 : \Lambda+\mu\,K=\R^n\}.  
\end{equation}
In their studies of lattice point free convex bodies, Kannan \&
Lov{\'a}sz \cite{kannanlovasz1988covering} embedded this functional 
into a series of functionals, called {\em covering minima}.
In order to define them, let $\grass(\R^n,k)$ be the Grassmannian of all
$k$-dimensional linear subspaces of~$\R^n$, and for a given lattice
$\Lambda\in\Lat^n$ let
\begin{equation*}
         \grass(\Lambda,k) = \left\{L\in\grass(\R^n,k) : \dim(L\cap\Lambda)=k\right\}
\end{equation*}
be the set of all $k$-dimensional linear {\em lattice planes} of
$\Lambda$.
As usual, the dimension $\dim(S)$ of a set $S \subseteq \R^n$ is defined as the dimension of the affine hull of~$S$.
Now, the \emph{$i$th covering minimum} $\mu_i(K,\Lambda)$ is defined as
the smallest positive number $\mu$ such that for every $L \in \grass(\R^n,n-i)$ the lattice $\Lambda$ is a covering lattice of $\mu\,K+L$, that is,  
\begin{equation}
  \mu_i(K,\Lambda)=\inf\{\mu>0 :
  (\Lambda+\mu\,K+L)=\R^n\text{ for all }L\in\grass(\R^n,n-i)\},
\label{eq:defcoveringminima}  
\end{equation}
for $1\leq i\leq n$.  For $i=n$, we retrieve the covering radius $\mu_n(K,\Lambda)=\mu(K,\Lambda)$,
and as it was pointed out in \cite[Lem.~(2.3)]{kannanlovasz1988covering},  the infimum in the above definition is attained for a certain lattice
plane~$L$. Hence, we may also write
\begin{equation}
  \mu_i(K,\Lambda)=\max\left\{\mu(K|L^\perp,\Lambda|L^\perp) :
  L\in\grass(\Lambda,n-i)\right\},
  \label{eq:def-covering-minima-projection}
\end{equation}
where $L^\perp$ denotes the orthogonal complement of $L$, and $\cdot|L^\perp$ the orthogonal projection onto it.
Note that since $L$ is a lattice plane, the projection $\Lambda|L^\perp$ is indeed again a lattice.

The counterpart to the covering radius in the Geometry of Numbers is the
{\em packing radius} $\rho(K,\Lambda)$, which is defined as the maximal $\rho>0$ such that a given lattice $\Lambda$ is a {\em packing lattice} of $\rho\,K$, that is, we have
$\inte(\vx+\rho\,K)\cap\inte(\vy+\rho\,K)=\emptyset$, for all
$\vx,\vy\in \Lambda$ with $\vx \neq \vy$, and where $\inte(\cdot)$ denotes the topological interior. Since
a lattice is an additive group this is equivalent to saying that
\begin{equation*} 
\rho(K,\Lambda)=\max\{\rho>0 :
\inte(\rho(K-K))\cap\Lambda=\{\vnull\}\}.
\end{equation*}  
The set $K-K = \{\vx-\vy:\vx,\vy\in K\}$ is called the \emph{difference body} of~$K$, and it clearly belongs to the set of \emph{origin-symmetric} convex bodies, that is, those $M \in \Kn$ satisfying $M = -M$.
For the sake of brevity and since we use the difference body oftentimes in the sequel, we denote it by $K_c = K-K$. We may thus also write
\begin{equation*}
  \rho(K,\Lambda)=\lambda(K_c,\Lambda):=\min\{\lambda>0 : \lambda\,K_c\cap\Lambda\ne\{\vnull\}\},
\end{equation*}
where $\lambda(K_c,\Lambda)$  is the length of a
shortest non-trivial lattice vector in~$\Lambda$, and that length
being measured with respect to the norm $\|\vx\|_{K_c} = \min\{r \geq
0 : \vx \in r\, K_c\}$ induced by $K_c$.

Now, in analogy to \eqref{eq:defcoveringminima},
we define the \emph{$i$th packing minimum} as 
\begin{equation*}
  \rho_i(K,\Lambda)=\inf\{\rho>0:
  (\rho\, K_c+L)\cap\Lambda \ne  L \cap \Lambda \text{ for all }L\in\grass(\R^n,n-i)\},
\end{equation*}
for $1\leq i\leq n$. 
 As we will see in Lemma \ref{lem:rho-i-attained}, and in analogy with
\eqref{eq:def-covering-minima-projection}, the infimum is attained
at a certain lattice plane, and we may write 
\begin{align}
  \rho_i(K,\Lambda)=\max\left\{\lambda(K_c|L^\perp,\Lambda|L^\perp):L\in\grass(\Lambda,n-i)\right\}.\label{eq:def-packing-minima}
\end{align}
In words, $\rho_i(K,\Lambda)$ is the smallest number $\rho > 0$ such that every projection of $\rho\, K_c$ along an $(n-i)$-dimensional lattice plane~$L$ of $\Lambda$ contains a non-trivial lattice point of the $i$-dimensional lattice~$\Lambda|L^\perp$. In particular,
 \begin{equation} 
   \rho_n(K,\Lambda)=\rho(K,\Lambda) \quad\text{ and }\quad \rho_1(K,\Lambda)=\mu_1(K,\Lambda).
 \label{eq:rho-1-n}   
\end{equation} 
In order to understand the geometric meaning of the packing minima, we need some more definitions. First, for an origin-symmetric convex body $M\in\Kn$, the functional $\lambda(M,\Lambda)$ was embedded by Minkowski in a series of functionals which are known as Minkowski's successive minima. Here, the \emph{$i$th successive minimum} $\lambda_i(M,\Lambda)$ is defined as the smallest dilation factor $\lambda>0$ such that $\lambda\,M$ contains at least~$i$ linearly independent lattice points of $\Lambda$, that is, for $1\leq i\leq n$   
\begin{equation*}
  \lambda_i(M,\Lambda)=\min\{\lambda>0:
  \dim(\lambda\,M\cap\Lambda)\geq i\}. 
\end{equation*}
Hence, we retrieve $\lambda_1(M,\Lambda)=\lambda(M,\Lambda)$.
Furthermore, we need the {\em polar body} $\dual{K}$ of $K\in\Kn$ defined by
 \begin{equation*}
   \dual{K}=\{\vy\in\R^n : \ip{\vx}{\vy}\leq 1, \text{ for all }
   \vx\in K\}, 
 \end{equation*}
 where $\ip{\vx}{\vy}$ denotes the standard inner product on
 $\R^n$. The counterpart for lattices is the so-called {\em polar lattice} $\dual{\Lambda}$ of $\Lambda\in\Lat^n$ given by
\begin{equation*}
   \dual{\Lambda}=\{\va\in\R^n : \ip{\vb}{\va}\in\Z, \text{ for all }
   \vb\in \Lambda\}. 
 \end{equation*}
The first successive minimum $\lambda_1(\dual{K_c},\dual{\Lambda})$ in
this polar setting is also known as the {\em lattice width} of $K$ (with respect to~$\Lambda$):
\begin{equation*}
  \lambda_1(\dual{K_c},\dual{\Lambda})=\min_{\vv \in \dual{\Lambda} \setminus\{\vnull\}}\left(\max_{\vx\in
  K}\ip{\vv}{\vx}-\min_{\vy\in
  K}\ip{\vv}{\vy}\right).
\end{equation*}
In \cite[Prop.~(2.4)]{kannanlovasz1988covering} it is shown that
\begin{align}
  \mu_1(K,\Lambda)=\frac{1}{\lambda_1(\dual{K_c},\dual{\Lambda})},\label{eq:mu1-lat-width} 
\end{align}
and so, in view of~\eqref{eq:rho-1-n}, we also have $\rho_1(K,\Lambda)=\lambda_1(\dual{K_c},\dual{\Lambda})^{-1}$.       
%
For more information on the Geometry of Numbers, the theory of convex bodies, lattices and polarity, we refer to the excellent textbooks~\cite{gruber2007convex,gruberlekker1987geometry,martinet2003perfect}.

The discussion above shows that the packing minima interpolate between
$\rho_1(K,\Lambda)=\lambda_1(\dual{K_c},\dual{\Lambda})^{-1}$ and the
first successive minimum $\rho_n(K,\Lambda) = \lambda_1(K_c,\Lambda)$
of~$K_c$. In fact, this holds more generally as 
\begin{proposition} Let $K\in\Kn$ and $\Lambda\in\Lat^n$. Then for
  $1\leq i\leq n$  
\begin{align}
\frac{1}{\lambda_i(\dual{K_c},\dual{\Lambda})} \leq \rho_i(K,\Lambda) \leq \lambda_{n-i+1}(K_c,\Lambda),
\end{align}
Moreover, the upper bound is always attained for $i=n$ and the lower
bound for $i=1$.
\label{prop:succmin-bounds-intro}
\end{proposition}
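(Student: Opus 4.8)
The plan is to prove the two bounds separately, in each case using the projection formula \eqref{eq:def-packing-minima} to reduce a statement about $\rho_i(K,\Lambda)$ to a statement about a first successive minimum inside a single lattice plane.

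For the upper bound I would fix an arbitrary $L\in\grass(\Lambda,n-i)$ and put $\lambda=\lambda_{n-i+1}(K_c,\Lambda)$. By definition there exist linearly independent lattice vectors $\vv_1,\dots,\vv_{n-i+1}\in\lambda\,K_c\cap\Lambda$. The orthogonal projection onto $L^\perp$ has kernel $L$ of dimension $n-i$, so the $(n-i+1)$-dimensional linear hull of the $\vv_j$ cannot lie entirely in $L$; hence at least one projection $\vv_j|L^\perp$ is a non-zero vector of $\Lambda|L^\perp$, and it lies in $(\lambda\,K_c)|L^\perp=\lambda\,(K_c|L^\perp)$. This gives $\lambda(K_c|L^\perp,\Lambda|L^\perp)\le\lambda$, and taking the maximum over all $L$ in \eqref{eq:def-packing-minima} yields $\rho_i(K,\Lambda)\le\lambda_{n-i+1}(K_c,\Lambda)$.

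For the lower bound the idea is to pass to the polar picture inside a suitable lattice plane. I would rely on two standard duality facts: if $L\in\grass(\Lambda,n-i)$ then $L^\perp\in\grass(\dual{\Lambda},i)$, and inside $L^\perp$ one has $\dual{(K_c|L^\perp)}=\dual{K_c}\cap L^\perp$ and $\dual{(\Lambda|L^\perp)}=\dual{\Lambda}\cap L^\perp$. Together with the elementary transference inequality $\lambda_1(M,\Gamma)\,\lambda_d(\dual{M},\dual{\Gamma})\ge 1$, valid for any origin-symmetric $M$ and lattice $\Gamma$ in a $d$-dimensional space (pair a shortest vector of $\Gamma$ with the $d$ independent dual vectors and use integrality of the inner product), applying this in the $i$-dimensional space $L^\perp$ shows
\begin{equation*}
  \lambda(K_c|L^\perp,\Lambda|L^\perp)\ge\frac{1}{\lambda_i(\dual{K_c}\cap L^\perp,\dual{\Lambda}\cap L^\perp)}.
\end{equation*}
It then suffices to exhibit a single lattice plane making the right-hand side at least $1/\lambda_i(\dual{K_c},\dual{\Lambda})$. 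Setting $\mu=\lambda_i(\dual{K_c},\dual{\Lambda})$ and picking linearly independent $\vb_1,\dots,\vb_i\in\mu\,\dual{K_c}\cap\dual{\Lambda}$, I take $M=\operatorname{span}(\vb_1,\dots,\vb_i)\in\grass(\dual{\Lambda},i)$ and $L=M^\perp$. Since the $\vb_j$ are independent vectors of $\dual{\Lambda}\cap M$ lying in $\mu\,(\dual{K_c}\cap M)$, we get $\lambda_i(\dual{K_c}\cap M,\dual{\Lambda}\cap M)\le\mu$, and substituting $L=M^\perp$ into the displayed inequality gives $\rho_i(K,\Lambda)\ge 1/\mu$.

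The two extremal cases are then immediate from the introduction: for $i=n$ the upper bound is $\lambda_1(K_c,\Lambda)=\rho_n(K,\Lambda)$ by \eqref{eq:rho-1-n}, and for $i=1$ the lower bound is $1/\lambda_1(\dual{K_c},\dual{\Lambda})=\rho_1(K,\Lambda)$ by \eqref{eq:rho-1-n} together with \eqref{eq:mu1-lat-width}. I expect the main obstacle to be the careful verification of the polar/dual identities for sections and projections inside a lattice plane, as this is precisely where the assumption that $L$ is a lattice plane (and not just a subspace) must be used.
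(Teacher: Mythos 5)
Your proof is correct and follows essentially the same route as the paper's: the identical projection argument for the upper bound, and for the lower bound the same combination of Mahler's transference inequality (the lower bound in \eqref{eq:banaszczyk}) with the polar identities $\dual{(K_c|L^\perp)}=\dual{K_c}\cap L^\perp$ and $\dual{(\Lambda|L^\perp)}=\dual{\Lambda}\cap L^\perp$ inside a lattice plane. The only cosmetic differences are that you exhibit the optimal plane $M$ spanned by the dual vectors attaining $\lambda_i(\dual{K_c},\dual{\Lambda})$ explicitly (the paper phrases this as a max--min identity over lattice planes) and that you re-derive Mahler's bound elementarily rather than citing it.
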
   
%
%
With these bounds at hand, we can now discuss a first example and determine the packing minima of axis parallel boxes.
To this end, for $\vr=(r_1,\ldots,r_n) \in \R^n$ with $0 < r_1 \leq \ldots \leq r_n$, we write $C(\vr)=[-r_1,r_1]\times\ldots\times[-r_n,r_n]$.
Then, for $1 \leq i \leq n$, we have
\begin{align}
\frac{1}{\lambda_i(\dual{2C(\vr)},\Z^n)} = \rho_i(C(\vr),\Z^n) = \lambda_{n-i+1}(2C(\vr),\Z^n) = \frac{1}{2r_i},\label{eq:ex-boxes}
\end{align}
and in particular,
\begin{align}
\rho_1(C(\vr),\Z^n) \geq \rho_2(C(\vr),\Z^n) \geq \ldots \geq \rho_n(C(\vr),\Z^n).\label{eq:ex-boxes-monotonicity}
\end{align}
In view of Proposition \ref{prop:succmin-bounds-intro} and the homogeneity of the involved functionals, it suffices to compute the
respective successive minima of $C(\vr)$ and $\dual{C(\vr)} =
\conv\!\big(\big\{\pm \frac{1}{r_1} \ve_1,\ldots,\pm \frac{1}{r_n}
  \ve_n\big\}\big)$ and see that they agree. Here, $\ve_i$ are the
canonical unit vectors.
Since $r_1 \leq \ldots \leq r_n$, we clearly have $\lambda_i(C(\vr),\Z^n)=\frac{1}{r_{n-i+1}}$, for $1 \leq i \leq n$.
Similarly, one finds that $\lambda_i(\dual{C(\vr)},\Z^n) = r_i$, and~\eqref{eq:ex-boxes} follows.

Of course, since for the boxes $C(\vr)$, the packing minima agree with the successive minima we do not obtain new information for these examples.
However, in Section~\ref{sect:reg-simplex-lattice}, we will study more involved examples that explicitly show that 
the inequalities in Proposition \ref{prop:succmin-bounds-intro} can be strict (see Corollary~\ref{cor:Lambda-ab-bounds}).

From the definition of the successive minima and the
covering minima it is clear that these two sequences are
increasing. 
In view of~\eqref{eq:ex-boxes-monotonicity}, one might suspect that
the packing minima form a decreasing sequence.
This is however not true in general, as otherwise we would always have $\rho_1(K,\Lambda)
\geq\rho_n(K,\Lambda)$ which is equivalent to
$\lambda_1(\dual{K_c},\dual{\Lambda}) \cdot \lambda_1(K_c,\Lambda) \leq 1$.
But the order of magnitude of this latter product can be linear in the dimension~$n$.
More precisely, Conway \& Thompson showed that there are self-dual lattices $\Lambda \in \Lat^n$, meaning $\dual{\Lambda}=\Lambda$, such that $\lambda_1(B_n,\Lambda)^2 \in \Omega(n)$ (cf.~\cite[Ch.~2, \S9]{milnorhusemoller1973symmetric}). 

Due to their close relation to the successive minima of $K_c$ and $\dual{K_c}$, the packing
minima provide a new tool to improve known
results involving successive minima, and this was, indeed, our main
motivation to investigate these functionals. We start with  
\emph{Minkowski's 2nd Theorem on successive minima} that establishes a
relation between the successive minima and the \emph{volume} $\vol(K)$, that is, the $n$-dimensional Lebesgue measure, of a convex body: 
Let $K\in \Kn$ and $\Lambda\in\Lat^n$. Then
\begin{equation}
\frac{1}{n!} \prod_{i=1}^n \frac{1}{\lambda_i(K_c,\Lambda)} \leq
\frac{\vol(K)}{\det(\Lambda)} \leq \prod_{i=1}^n
\frac{1}{\lambda_i(K_c,\Lambda)}.
\label{eq:minkowskisecond} 
\end{equation} 
Minkowski originally proved these bounds for origin-symmetric convex bodies (see~\cite{henk2002successive} for a modern exposition of Minkowski's original ideas, and for more background information).
However, an application of the well-known Brunn-Minkowski inequality allows to extend the upper bound to arbitrary $K \in \Kn$ as stated (cf.~\cite[p.~59]{gruberlekker1987geometry} or~\cite[(1.2)]{henkhenzecifre2016variations}).
The lower bound can be much easier derived; we refer to the discussion
in~\cite[p.~62]{gruberlekker1987geometry} in combination with the
remarks after~\cite[(1.2)]{henkhenzecifre2016variations}.

Due to Proposition \ref{prop:succmin-bounds-intro}, the upper bound in
\eqref{eq:minkowskisecond} is also valid for the packing minima, and
in fact, we also have a lower bound in terms of the packing minima. 
\begin{theo}
\label{thm:minkowskisecondrho}
Let $K\in\Kn$ and $\Lambda\in\Lat^n$. Then 
\begin{equation}
\frac{1}{n!} \prod_{i=1}^n \frac{1}{\rho_i(K,\Lambda)} \leq
\frac{\vol(K)}{\det(\Lambda)} \leq \prod_{i=1}^n
\frac{1}{\rho_i(K,\Lambda)}.  
\end{equation}
\end{theo}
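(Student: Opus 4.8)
The plan is to treat the two inequalities separately, the right-hand one being essentially immediate and the left-hand one carrying all the difficulty.

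For the upper bound I would simply combine Proposition~\ref{prop:succmin-bounds-intro} with the upper bound in Minkowski's second theorem~\eqref{eq:minkowskisecond}. Since $\rho_i(K,\Lambda)\le\lambda_{n-i+1}(K_c,\Lambda)$ for every $i$, reindexing gives
\[
\prod_{i=1}^n\frac{1}{\rho_i(K,\Lambda)}\ge\prod_{i=1}^n\frac{1}{\lambda_{n-i+1}(K_c,\Lambda)}=\prod_{i=1}^n\frac{1}{\lambda_i(K_c,\Lambda)}\ge\frac{\vol(K)}{\det(\Lambda)},
\]
which is exactly the asserted upper bound, as already announced in the paragraph preceding the theorem.

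For the lower bound I would first rewrite the claim as $\vol(K)\prod_{i=1}^n\rho_i(K,\Lambda)\ge\det(\Lambda)/n!$. Note that replacing each $\rho_i$ by the lower bound $1/\lambda_i(\dual{K_c},\dual{\Lambda})$ from Proposition~\ref{prop:succmin-bounds-intro} and then invoking Minkowski's theorem for the symmetric body $\dual{K_c}$ is too lossy: it reduces the statement to $\vol(K)\vol(\dual{K_c})\ge 2^n/n!$, which for symmetric $K$ is precisely Mahler's (unproven) conjecture, so one must keep the packing minima themselves. The approach I would take is a direct flag construction. Fix a complete flag of lattice subspaces $\{0\}=L_0\subset L_1\subset\cdots\subset L_n=\R^n$ with $L_k\in\grass(\Lambda,k)$, and let $\pi_i$ be the orthogonal projection onto the $i$-dimensional space $L_{n-i}^\perp$. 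By~\eqref{eq:def-packing-minima} and Lemma~\ref{lem:rho-i-attained} we have $\rho_i(K,\Lambda)\ge\lambda\big(K_c|L_{n-i}^\perp,\Lambda|L_{n-i}^\perp\big)$, so there is a lattice vector $u_i\in\Lambda$ whose projection $\pi_i(u_i)$ realizes this shortest-vector length, i.e.\ $\pi_i(u_i)\in\rho_i(K,\Lambda)\,\big(K_c|L_{n-i}^\perp\big)$. The plan is then to choose the flag greedily, by a Korkine--Zolotareff-type reduction adapted to the successive projected difference bodies, so that $u_1,\dots,u_n$ form a staircase basis: each $u_i$ is primitive modulo $\Lambda\cap L_{n-i}$, whence $|\det(u_1,\dots,u_n)|\ge\det(\Lambda)$ and the projected \emph{layer lengths} multiply to $\det(\Lambda)$. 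Assembling the preimages of the $\pi_i(u_i)/\rho_i(K,\Lambda)\in K_c|L_{n-i}^\perp$ into a simplex contained in a projection-compatible copy of $K_c$ should then yield, via the multiplicativity of volume along the flag, a bound of the form $\vol(K_c)\ge c_n\,\det(\Lambda)/\prod_i\rho_i(K,\Lambda)$; transferring from $K_c=K-K$ back to $K$ by the Rogers--Shephard inequality (or, better, by carrying out the simplex construction inside $K$ directly) is what should produce the sharp constant $1/n!$.

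The main obstacle is the lower bound, and within it the coordination step. Because each $\rho_i(K,\Lambda)$ is itself a \emph{maximum} over $(n-i)$-dimensional lattice planes, a single flag cannot in general realize all of these maxima simultaneously; one only controls $\rho_i$ from below by its value at the chosen plane, so the flag must be engineered so as to lose nothing in the product $\prod_i\rho_i$. Compounding this, the shortest vector of a projected lattice need not advance the flag, so extracting $n$ lattice points that are both \emph{staircase-independent} (to guarantee $|\det(u_1,\dots,u_n)|\ge\det(\Lambda)$) and simultaneously short in the respective projected norms is the delicate heart of the argument. Finally, pinning the constant to exactly $1/n!$, rather than a weaker constant coming from $\binom{2n}{n}$, requires either a careful difference-body transfer or performing the volume estimate directly with $K$ rather than $K_c$, mirroring the fact noted in the text that the lower bound in~\eqref{eq:minkowskisecond} is the \emph{easier} but constant-sensitive direction.
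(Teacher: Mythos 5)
Your upper bound argument is complete and coincides with the paper's: Proposition~\ref{prop:succmin-bounds-intro} combined with the upper bound in~\eqref{eq:minkowskisecond} settles it in one line. The lower bound, however, is a plan rather than a proof, and the two steps you yourself flag as the ``delicate heart'' are genuine gaps. First, the simplex step fails as described. If $x_i \in K_c$ is a lift of the projected vector $\pi_i(u_i)/\rho_i(K,\Lambda) \in K_c|L_{n-i}^\perp$, then $\rho_i(K,\Lambda)\, x_i = u_i + \ell_i$ with an \emph{uncontrolled real} (not lattice) vector $\ell_i \in L_{n-i}$, and $\det(x_1,\ldots,x_n)$ has no a priori lower bound; already in the plane the two terms in its expansion can cancel, so ``multiplicativity of volume along the flag'' is not available. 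The identity $|\det(x_1,\ldots,x_n)| = |\det(u_1,\ldots,u_n)|/\prod_i\rho_i(K,\Lambda) \geq \det(\Lambda)/\prod_i\rho_i(K,\Lambda)$ only holds if the flag is spanned by the chosen lattice vectors themselves, $L_k = \lin\{u_n,\ldots,u_{n-k+1}\}$ (the Korkine--Zolotarev flag), because then each $\ell_i$ lies in the span of the later columns and is eliminated by column operations; you gesture at a ``KZ-type reduction'' but never set up this alignment, which is precisely the point. Second, the constant: Rogers--Shephard converts $\vol(K_c)$ into $\vol(K)$ only at the cost of $\binom{2n}{n}$, which destroys $1/n!$; to carry out the construction ``inside $K$ directly'' you need the lemma that $\vol(K) \geq \frac{1}{n!}\,|\det(p_1-q_1,\ldots,p_n-q_n)|$ whenever $p_i,q_i \in K$ --- this is true, but you neither state nor prove it, and without it the sharp constant is out of reach.

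For comparison, the paper avoids all of this coordination with a short induction on~$n$: take $\vv$ realizing $\lambda_1(K_c,\Lambda)=\rho_n(K,\Lambda)$ and write $\vv/\lambda_1(K_c,\Lambda) = \vv_1-\vv_2$ with $\vv_1,\vv_2 \in K$; then $\vol(K) \geq \frac{1}{n}\|\vv_1-\vv_2\|\cdot\vol(K|L_\vv^\perp)$ by Steiner symmetrization, $\det(\Lambda\cap L_\vv)\det(\Lambda|L_\vv^\perp)=\det(\Lambda)$ factors the determinant, and the induction hypothesis applied to $(K|L_\vv^\perp,\Lambda|L_\vv^\perp)$ closes the argument via Lemma~\ref{lem:proj}, i.e.\ $\rho_i(K|L_\vv^\perp,\Lambda|L_\vv^\perp)\leq\rho_i(K,\Lambda)$. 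That monotonicity lemma --- which your sketch never invokes --- is exactly what legitimizes a greedy, one-step-at-a-time choice of the flag and makes the global ``staircase'' engineering unnecessary; the $1/n$ gained per projection step accumulates to the sharp $1/n!$. Your ingredients can be completed into a correct one-shot proof (KZ-aligned flag plus the difference-determinant lemma above), but as written the argument does not go through.
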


\noindent Observe, that by Proposition~\ref{prop:succmin-bounds-intro} the lower bound improves upon~\eqref{eq:minkowskisecond}.
Moreover, both bounds are best possible as shown by $C(\vr)$ for the upper bound and by $\dual{C(\vr)}$ for the lower bound.

Regarding lattice points and successive minima we have (cf.~\cite[Thm.~2.1]{betkehenkwills1993successive})
\begin{align}
  \#(K\cap\Lambda)\leq \left\lfloor
    \frac{1}{\lambda_1(K_c,\Lambda)}+1\right\rfloor^n = \left\lfloor
    \frac{1}{\rho_n(K,\Lambda)}+1\right\rfloor^n.
\label{eq:disc-lambda1-bound}  
\end{align}
In the same paper it has been conjectured that, for origin-symmetric
convex bodies, this inequality holds in a much stronger fashion. 
Later, in~\cite{malikiosis2010adiscrete}, the conjecture was extended
to arbitrary convex bodies  $K \in \Kn$.
\begin{conj*}[{\cite{betkehenkwills1993successive,malikiosis2010adiscrete}}]
\label{conj:discrete-2nd-minkowski}
Let $K \in \Kn$ and $\Lambda \in \Lat^n$.
Then,
\begin{align}
\#(K\cap\Lambda) \leq \prod_{i=1}^n
      \left\lfloor\frac{1}{\lambda_i(K_c,\Lambda)}+1 \right\rfloor.\label{eq:conj-discrete-2nd-Mink}
\end{align}
\end{conj*}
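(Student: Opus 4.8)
The plan is to prove the bound by induction on the dimension $n$, partitioning $\Lambda$ into parallel lattice layers and estimating separately the number of occupied layers and the number of lattice points inside each one. For $n=1$ the inequality is immediate: if $K=[a,b]$ then $K_c=[-(b-a),\,b-a]$, so $1/\lambda_1(K_c,\Lambda)$ is exactly the length of $K$ measured in units of $\Lambda$, and an interval of that length contains at most $\lfloor 1/\lambda_1(K_c,\Lambda)+1\rfloor$ points of $\Lambda$.

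For the inductive step I would select an $(n-1)$-dimensional lattice plane $L\in\grass(\Lambda,n-1)$ and split $K\cap\Lambda$ according to the layers $(L+\vy)\cap\Lambda$ that its points occupy. Projecting onto $L^\perp$, the number of occupied layers is at most $\#\big((K|L^\perp)\cap(\Lambda|L^\perp)\big)$, which is a one-dimensional count controlled by $\lambda_1(K_c|L^\perp,\Lambda|L^\perp)$ via the base case and the identity $(K|L^\perp)_c=K_c|L^\perp$; choosing $L$ to maximize this quantity ties it to $\rho_1(K,\Lambda)$ through the projection description $\rho_i(K,\Lambda)=\max_L\lambda(K_c|L^\perp,\Lambda|L^\perp)$ in \eqref{eq:def-packing-minima}. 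Each nonempty layer contributes at most the number of lattice points of a section $K\cap(L+\vy)$, which is an $(n-1)$-dimensional instance on the sublattice $\Lambda\cap L$, so a bound of the conjectured product form would follow once the minima of these sections are matched against $\lambda_2(K_c,\Lambda),\dots,\lambda_n(K_c,\Lambda)$.

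The main obstacle is exactly this matching, and it is where I expect the argument to resist completion. The number of layers is a \emph{projection} quantity, whereas the per-layer counts are governed by \emph{sections}, and Minkowski's successive minima interlace sections and projections only up to factors that grow with the dimension; since the $\lambda_i(K_c,\Lambda)$ are moreover not monotone under either operation, there is no clean way to feed the lower-dimensional estimates back into the floor functions without loss. This is precisely the reason the inequality is known only in small dimensions and for special bodies. Replacing the successive minima by the packing minima $\rho_i(K,\Lambda)$, whose definition commutes with projection, neutralizes part of this difficulty, but at a price: by Proposition~\ref{prop:succmin-bounds-intro} one has $\rho_i(K,\Lambda)\le\lambda_{n-i+1}(K_c,\Lambda)$, so the induction delivers at best $\#(K\cap\Lambda)\le\prod_{i=1}^n\lfloor 1/\rho_i(K,\Lambda)+1\rfloor$, a quantity strictly larger than the conjectured right-hand side whenever the bounds of Proposition~\ref{prop:succmin-bounds-intro} are strict. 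Closing that gap — recovering the sharp successive minima inside the floors while keeping the discrete count aligned with the non-monotone minima — is the genuine difficulty, and I would not expect a clean dimensional induction alone to resolve it.
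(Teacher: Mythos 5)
You were asked to prove a statement that is not a theorem of this paper at all: it is the open conjecture of Betke--Henk--Wills and Malikiosis, which the paper records precisely as a conjecture and explicitly states is proven only for planar origin-symmetric bodies, for arbitrary bodies in dimension $n \leq 3$, and in general dimension only up to the multiplicative constant $(4/e)\sqrt{3}^{\,n-1}$. There is therefore no proof in the paper to compare against, and your proposal rightly does not claim to supply one. Your diagnosis of the obstruction is also accurate and matches the paper's own framing: the quantity controlling the number of occupied layers is a projection invariant, the per-layer counts are section invariants, and the successive minima $\lambda_i(K_c,\Lambda)$ do not behave compatibly under either operation, which is exactly why the sharp bound \eqref{eq:conj-discrete-2nd-Mink} resists a dimensional induction.

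It is worth noting that the inductive slicing scheme you sketch --- split $K\cap\Lambda$ into layers parallel to a lattice plane $L$, bound the layer count by a projected problem and each layer by Lemma~\ref{lem:slice_points}-type section estimates, and recurse --- is essentially the scheme the paper does carry out, but only for the weaker packing-minima bound: this is the content of Lemma~\ref{lem:slice_points}, Lemma~\ref{lem:main} and Theorem~\ref{theo:main2}, yielding Theorem~\ref{thm:main}, i.e.\ $\#(K\cap\Lambda) \leq \prod_{i=1}^n \lfloor 1/\rho_i(K,\Lambda)+1\rfloor$. The reason the scheme closes for the $\rho_i$ but not for the $\lambda_i$ is the one you identify: the packing minima are defined through projections and satisfy $\rho_i(K|L^\perp,\Lambda|L^\perp)\leq\rho_i(K,\Lambda)$ (the paper's Lemma~\ref{lem:proj}), so the induction hypothesis can be fed back in, whereas by Proposition~\ref{prop:succmin-bounds-intro} one only has $\rho_i(K,\Lambda)\leq\lambda_{n-i+1}(K_c,\Lambda)$, so the resulting product is in general larger than the conjectured one. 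In short: your proposal correctly reconstructs the paper's route to its actual main theorem, correctly observes that this route improves \eqref{eq:lattice-upper-dual} but falls short of the conjecture, and correctly concludes that the remaining gap is the genuine open problem rather than a defect of your argument.
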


\noindent The importance of this conjecture comes from the fact that it
discretizes the upper bound in Minkowski's 2nd Theorem
\eqref{eq:minkowskisecond}  in view of the identity
\begin{align}
\frac{\vol(K)}{\det(\Lambda)} = \lim_{s \to 0} s^n \#(K \cap s \Lambda)= \lim_{r \to \infty} r^{-n} \#(rK \cap \Lambda)\label{eq:vol-limit-numlatpoints}
\end{align}
and the homogeneity of the successive minima (see~\cite[Prop.~2.2]{betkehenkwills1993successive} for details).
Moreover, a (combinatorial) proof of the conjecture
would lead to new insights to Minkowski's deep and fundamental result.
For planar origin-symmetric convex bodies, the
conjecture holds as shown in~\cite{betkehenkwills1993successive}.
Malikiosis~\cite[Sect.~3.2]{malikiosis2010adiscrete}  proved
it for
arbitrary $K \in \Kn$ whenever $n \leq 3$, and
in~\cite[Thm.~3.1.2]{malikiosis2010adiscrete} he verified
the bound in general dimension up to the multiplicative
constant $(4/e)\sqrt{3}^{n-1}$.

While this discretization of Minkowski's classical result turns out to be quite intricate, it is easy to derive a best possible upper bound on the number of lattice points in terms of the successive minima of the polar body (cf.~Proposition~\ref{prop:second-minkowski-polarlattice}):
\begin{equation} 
  \#(K\cap\Lambda) \leq \prod_{i=1}^n
      \left\lfloor \lambda_i(\dual{K_c},\dual{\Lambda})+1
      \right\rfloor.
\label{eq:lattice-upper-dual}      
\end{equation}  
Our main result establishes the analogous upper bound in terms of the
packing minima.
Taking the inequalities in Proposition \ref{prop:succmin-bounds-intro} into account, this improves upon~\eqref{eq:lattice-upper-dual} and provides additional evidence for Conjecture~\eqref{eq:conj-discrete-2nd-Mink}.
\begin{theo} 
\label{thm:main}
Let $K\in\Kn$ and $\Lambda\in\Lat^n$. Then,
  \begin{equation*}
     \#(K\cap\Lambda) \leq \prod_{i=1}^n
      \left\lfloor\frac{1}{\rho_i(K,\Lambda)}+1 \right\rfloor, 
  \end{equation*}
  and this inequality cannot be improved in general.
  \label{theo:enumerator}
\end{theo}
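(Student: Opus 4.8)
The plan is to prove the estimate by induction on the dimension $n$, peeling off one packing minimum at a time by slicing $K$ along the hyperplanes orthogonal to the lattice‑width direction. For $n=1$ the statement is the elementary count of integer points in an interval, and it agrees with $\lfloor 1/\rho_1(K,\Lambda)+1\rfloor$. For the inductive step I fix a primitive $\vv\in\dual{\Lambda}$ realizing the lattice width, i.e.\ $\max_{\vx\in K}\ip{\vv}{\vx}-\min_{\vx\in K}\ip{\vv}{\vx}=\lambda_1(\dual{K_c},\dual{\Lambda})=1/\rho_1(K,\Lambda)$, and consider the lattice hyperplanes $H_t=\{\vx:\ip{\vv}{\vx}=t\}$, $t\in\Z$. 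Since $\vv\in\dual{\Lambda}$, every point of $\Lambda$ lies on some $H_t$, and since the range of $\ip{\vv}{\cdot}$ on $K$ has length $1/\rho_1(K,\Lambda)$, at most $\lfloor 1/\rho_1(K,\Lambda)+1\rfloor$ of the $H_t$ meet $K$. This isolates the first factor.

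The heart of the argument is an inequality relating the packing minima of a section to those of $K$ with a shift of index. Writing $\Lambda'=\Lambda\cap\vv^\perp$ for the $(n-1)$‑dimensional section lattice and $S=K\cap H_t$ for a slice, I claim
\[
\rho_j(S,\Lambda')\ \ge\ \rho_{j+1}(K,\Lambda),\qquad 1\le j\le n-1.
\]
I would derive this from the lattice‑point‑avoiding reading of the definition: $\rho_{j+1}(K,\Lambda)\ge\rho$ precisely when some $(n-j-1)$‑dimensional subspace $L$ satisfies $(\rho K_c+L)\cap\Lambda\subseteq L$, and analogously $\rho_j(S,\Lambda')\ge\rho$ when some $(n-1-j)$‑dimensional $L'\subseteq\vv^\perp$ satisfies $(\rho S_c+L')\cap\Lambda'\subseteq L'$. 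The avoiding subspace has the \emph{same} dimension $n-j-1$ in both ambient spaces, which is exactly what generates the index shift. Using $S_c=S-S\subseteq K_c\cap\vv^\perp$ and $\Lambda'\subseteq\Lambda$, whenever the optimal $L$ lies inside $\vv^\perp$ one simply takes $L'=L$, and $(\rho S_c+L)\cap\Lambda'\subseteq(\rho K_c+L)\cap\Lambda\subseteq L$ closes the estimate.

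Granting this inequality, the induction closes cleanly. The lattice points of $K$ on $H_t$ form a coset of $\Lambda'$, so by the $(n-1)$‑dimensional case applied in $\vv^\perp$,
\[
\#(K\cap H_t\cap\Lambda)\le\prod_{j=1}^{n-1}\Big\lfloor\tfrac{1}{\rho_j(S,\Lambda')}+1\Big\rfloor\le\prod_{j=1}^{n-1}\Big\lfloor\tfrac{1}{\rho_{j+1}(K,\Lambda)}+1\Big\rfloor=\prod_{i=2}^{n}\Big\lfloor\tfrac{1}{\rho_i(K,\Lambda)}+1\Big\rfloor .
\]
Summing over the at most $\lfloor 1/\rho_1(K,\Lambda)+1\rfloor$ nonempty slices multiplies in the missing factor and yields the bound; degenerate lower‑dimensional slices only help, as every factor is at least $1$.

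The step I expect to be the main obstacle is the key inequality when the optimal avoiding subspace $L$ for $\rho_{j+1}(K,\Lambda)$ is \emph{transverse} to $\vv^\perp$: then $L\cap\vv^\perp$ loses a dimension and yields only the unshifted $\rho_{j+1}(S,\Lambda')\ge\rho_{j+1}(K,\Lambda)$, which is not enough. One must show that for the lattice‑width direction $\vv$ the avoiding subspace can always be chosen inside $\vv^\perp$ — this is what happens for the boxes $C(\vr)$, where the thin direction is absorbed into the complementary projection — or else replace a transverse $L$ by a subspace of $\vv^\perp$ without destroying the avoiding property; the interplay between the section lattice $\Lambda\cap\vv^\perp$ and subspaces tilted against it is the delicate point, compounded by the fact that the $\rho_i$ need not be monotone. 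Optimality of the estimate is then witnessed by the boxes $C(\vr)$ with integer radii, where $\#(C(\vr)\cap\Z^n)=\prod_{i}(2r_i+1)=\prod_i\lfloor 1/\rho_i+1\rfloor$ by~\eqref{eq:ex-boxes}.
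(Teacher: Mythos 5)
Your plan is genuinely different from the paper's proof --- you slice along the lattice hyperplanes orthogonal to the width direction $\vv\in\dual{\Lambda}$ and shift the packing-minima index down by one per slicing step, whereas the paper projects along shortest lattice vectors of $K_c$ and runs the induction over a maximal strictly decreasing subsequence of $\rho_1(K,\Lambda),\ldots,\rho_n(K,\Lambda)$ (Lemmas~\ref{lem:slice_points}, \ref{lem:firstvector}, \ref{lem:main} and Theorem~\ref{theo:main2}). Your slice count via $\rho_1$ and the reduction of each slice to a coset of $\Lambda'=\Lambda\cap\vv^\perp$ are fine. The problem is the inequality everything rests on, namely $\rho_j(K\cap H_t,\Lambda')\geq\rho_{j+1}(K,\Lambda)$: the transverse case that you yourself flag as the expected obstacle is not merely hard, the inequality is \emph{false}, so the induction cannot be closed along these lines.

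A counterexample, in the spirit of Section~\ref{sect:reg-simplex-lattice}: let $\{\vv_1,\vv_2,\vv_3\}$ be a $(1,\frac12)$-equiangular basis with dual basis $\{\bar\vv_1,\bar\vv_2,\bar\vv_3\}$, and for $\tau\geq\frac32$ let $T$ be the symmetric linear map which is the identity on $\bar\vv_1^\perp=\lin\{\vv_2,\vv_3\}$ and multiplies $\bar\vv_1$ by $\tau$. Take $K=B_3$ and $\Lambda=T\Lambda_{1,\frac12}$. Every vector of $\dual{\Lambda}=T^{-1}\dual{\Lambda_{1,\frac12}}$ outside $\lin\{\bar\vv_1\}$ has Euclidean norm at least $2/\sqrt{3}$, since its component in $\bar\vv_1^\perp$ is a non-zero vector of the dual of the triangular lattice $\Lambda'=\Z\vv_2+\Z\vv_3$, while $\|T^{-1}\bar\vv_1\|=\sqrt{3/2}/\tau<2/\sqrt{3}$; hence the width direction $\vv$ is parallel to $\bar\vv_1$, unique up to sign, and $\vv^\perp=\lin\{\vv_2,\vv_3\}$. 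The stretch does not affect the slices at all: $\Lambda\cap\vv^\perp=\Lambda'$ and the central slice $S_0=B_3\cap\vv^\perp$ is the unit disc, so $\rho_1(S_0,\Lambda')=\sqrt{3}/4$. However, for the lattice line $\ell=\lin\{T\vv_1\}$, which is transverse to $\vv^\perp$, one has $Q:=\|T\vv_1\|^2=\frac{1+2\tau^2}{3}$ and $\ip{\vv_k}{T\vv_1}=\frac12$ for $k=2,3$, hence
\[
\|(m\vv_2+n\vv_3)|\ell^\perp\|^2 \;=\; m^2+mn+n^2-\frac{(m+n)^2}{4Q}\;\geq\;1-\frac{1}{4Q},\qquad (m,n)\in\Z^2\setminus\{(0,0)\},
\]
with equality for $(m,n)=(1,0)$. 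Since $\Lambda=\Z T\vv_1+\Z\vv_2+\Z\vv_3$, this yields
\[
\rho_2(B_3,\Lambda)\;\geq\;\lambda_1\bigl((B_3)_c|\ell^\perp,\Lambda|\ell^\perp\bigr)\;=\;\tfrac12\sqrt{1-\tfrac{1}{4Q}}\;>\;\tfrac{\sqrt{3}}{4}\;=\;\rho_1(S_0,\Lambda'),
\]
because $Q>1$. Lattice lines inside $\vv^\perp$ cannot compete: for primitive $\vw\in\Lambda'$ the projection $\Lambda'|\vw^\perp$ already contains a vector of length $\det(\Lambda')/\|\vw\|\leq\sqrt{3}/2$, so they give at most $\sqrt{3}/4$; the optimal avoiding plane really does escape $\vv^\perp$, which is exactly your transverse case. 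Moreover, the defect survives the floors, so it cannot be repaired by rounding: for $\tau=2$ and $K=\frac92 B_3$, the central slice is the disc of radius $\frac92$ and one finds $\lfloor 1/\rho_1(K\cap\vv^\perp,\Lambda')+1\rfloor=11$, while $\lfloor 1/\rho_2(K,\Lambda)+1\rfloor\leq 10$; your per-slice bound is thus strictly weaker than the factor Theorem~\ref{thm:main} requires. This asymmetry is precisely why the paper argues on the primal side: there, projecting a shortest lattice vector yields a short vector of the projected lattice, which forces every optimal plane for a packing minimum exceeding $\rho_n$ to \emph{contain} all shortest vectors (Lemma~\ref{lem:firstvector}~i)); no analogous mechanism pushes optimal planes into the width hyperplane $\vv^\perp$, and the example above shows they can in fact be pushed out of it.
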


Just as for the estimate on the volume, examples that attain this bound are the axis parallel boxes $C(\vr)$, where every $r_i$ is a positive integer.
Indeed, the number of lattice points equals $\#(C(\vr) \cap \Z^n) = \prod_{i=1}^n(2 r_i +1)$ and the packing minima are as in~\eqref{eq:ex-boxes}.
Although the lower bound on the volume in Theorem~\ref{thm:minkowskisecondrho} is not that deep, it seems to be quite challenging to find a discrete variant that lower bounds $\#(K\cap\Lambda)$ in terms of the~$\rho_i(K,\Lambda)$. 
A corresponding bound for the successive minima has been obtained in ~\cite[Cor.~2.1]{betkehenkwills1993successive}, who proved that
\[
\#(K \cap \Lambda) \geq \frac{1}{n!}\prod_{i=1}^n\left(\frac{1}{\lambda_i(K_c,\Lambda)}-1\right),
\]
for origin-symmetric convex bodies $K \in \Kn$ such that $\lambda_n(K_c,\Lambda) \leq 1$.
Based on the fact that $\rho_1(K,\Lambda) = 1 / \lambda_1(\dual{K_c},\dual{\Lambda})$ and $\rho_n(K,\Lambda) = \lambda_1(K_c,\Lambda)$, we obtain a satisfactory answer in the plane and prove

\begin{theo}
\label{thm:discrete-lower-bound-plane}
Let $K \in \mathcal{K}^2$, $\Lambda \in \Lat^2$ be such that $\rho_2(K,\Lambda) \leq \frac12$.
Then,
\[
\#(K \cap \Lambda) \geq \frac12\left\lfloor \frac{1}{\rho_1(K,\Lambda)}-2 \right\rfloor \left( \frac{1}{\rho_2(K,\Lambda)} - 2 \right) - 2.
\]
If $K$ is moreover origin-symmetric, then
\[
\#(K \cap \Lambda) \geq \left\lfloor \frac{1}{2\rho_1(K,\Lambda)} \right\rfloor \left( \frac{1}{\rho_2(K,\Lambda)} - 2 \right).
\]
\end{theo}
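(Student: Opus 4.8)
The plan is to read off the two boundary cases of Proposition~\ref{prop:succmin-bounds-intro}: in the plane one has $\rho_1(K,\Lambda)^{-1}=\lambda_1(\dual{K_c},\dual{\Lambda})$, the lattice width of $K$, and $\rho_2(K,\Lambda)^{-1}=\lambda_1(K_c,\Lambda)^{-1}$; abbreviate $w=1/\rho_1(K,\Lambda)$ and $\mu=1/\rho_2(K,\Lambda)$, so that the hypothesis $\rho_2(K,\Lambda)\le\tfrac12$ reads $\mu\ge 2$. First I would choose a primitive shortest vector $\vb_1\in\Lambda\setminus\{\vnull\}$ for the norm $\|\cdot\|_{K_c}$, so that $\|\vb_1\|_{K_c}=\lambda_1(K_c,\Lambda)=1/\mu$, and extend it to a basis $\vb_1,\vb_2$ of $\Lambda$ with dual basis vector $\vv\in\dual{\Lambda}$ satisfying $\ip{\vv}{\vb_1}=0$ and $\ip{\vv}{\vb_2}=1$. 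Then the lattice lines parallel to $\vb_1$ are precisely $L_m=\{\vx\in\R^2:\ip{\vv}{\vx}=m\}$ for $m\in\Z$, and the lattice points on $L_m$ are exactly $m\vb_2+\Z\vb_1$; the whole count $\#(K\cap\Lambda)$ thus splits as a sum over these lines.

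Two estimates feed the count. On one hand, $\mu\vb_1\in\partial K_c=\partial(K-K)$ by the choice of $\vb_1$, so $K$ has a chord of length $\mu$ (in units of $\vb_1$) in direction $\vb_1$, and no longer one; hence the slice length $g(m)$, defined as the length of $K\cap L_m$ measured in units of $\vb_1$, satisfies $0\le g(m)\le\mu$, and $g$ is a concave function of $m$ on the (closed) interval where it is positive. On the other hand, the lines $L_m$ that meet $K$ correspond to the integers in the interval $\{\ip{\vv}{\vx}:\vx\in K\}$, whose length is the width of $K$ in direction $\vv$ and is therefore at least $\lambda_1(\dual{K_c},\dual{\Lambda})=w$ by the definition of the lattice width; so at least about $w$ of the lines meet $K$. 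Finally, the lattice points $m\vb_2+j\vb_1$ lying in $K$ are those with $j$ an integer in an interval of length $g(m)$, of which there are at least $g(m)-1$ whenever $g(m)\ge 1$. Summing over $m$ gives $\#(K\cap\Lambda)\ge\sum_{m\in\Z}\max\{0,g(m)-1\}$.

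It remains to bound this sum below, and here concavity does the work. On each of the two monotone branches of $g$ the slice lengths dominate the linear interpolation between the end values; since a linear ramp from (about) $0$ up to the peak (about) $\mu$ has average value about $\mu/2$, summing over the interior lines produces a contribution of order $\tfrac12 w\mu$, which matches the lower bound $\tfrac{1}{n!}\prod_i\rho_i(K,\Lambda)^{-1}$ from Theorem~\ref{thm:minkowskisecondrho}. Making this precise while discarding the two extremal lines (on which $g$ may be tiny) and accounting for the loss of one point per line yields the stated $\tfrac12\lfloor w-2\rfloor(\mu-2)-2$. In the origin-symmetric case I would first translate so that the center $\vnull\in\Lambda$ of $K$ lies on $L_0$; then the longest chord lies on the lattice line $L_0$ and the profile is symmetric, $g(-m)=g(m)$ with $g(0)=\mu$. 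The central chord is now centered at a lattice point and so carries at least $\mu-1$ points with no offset loss, and pairing $L_m$ with $L_{-m}$ and applying the concavity estimate on $[0,w/2]$ removes the extra corrections and gives the cleaner $\lfloor 1/(2\rho_1(K,\Lambda))\rfloor(\mu-2)$.

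The crux is the discrete bookkeeping of this last paragraph rather than any single inequality. One must simultaneously control: the loss of up to one lattice point per line caused by the unknown position of the chord relative to $\Z\vb_1$; the two boundary lines, where $g$ can be arbitrarily small and must simply be dropped; and the gap between the concave profile $g$ and its piecewise-linear minorant. Each of these produces an $O(1)$ correction that is exactly what the factors $w-2$, $\mu-2$ and the trailing $-2$ absorb, so the difficulty is to keep all three sharp enough to reach the claimed constants—and, in the symmetric case, to verify that centering $K$ at a lattice point genuinely cancels the offset loss on the central line. The boxes $C(\vr)$ with integral entries, where every slice attains the full length $\mu$, provide a convenient check that the constants lie in the correct range.
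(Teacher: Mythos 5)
Your proposal follows essentially the same route as the paper's proof: reduce via $\rho_1(K,\Lambda)^{-1}=\lambda_1(\dual{K_c},\dual{\Lambda})$ and $\rho_2(K,\Lambda)=\lambda_1(K_c,\Lambda)$, slice $K$ by the lattice lines parallel to a $K_c$-shortest lattice vector, count at least (chord length)${}-1$ points per line, bound the chord lengths from below linearly between the peak chord of length $1/\rho_2(K,\Lambda)$ and the extreme heights, and absorb the $O(1)$ losses into the constants. The paper implements your concavity step by inscribing the quadrilateral $Q=\conv\{\vv_1,\vv_2,\vs,\vt\}$ in $K$ and computing its slices via the intercept theorem --- which is exactly the linear minorant you invoke --- and the bookkeeping you defer is carried out there and closes as you predict, with the symmetric case likewise treated as a minor adjustment.
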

\noindent Approximating the volume of~$K$ by the number of lattice points in large dilates $r K$ (see~\eqref{eq:vol-limit-numlatpoints}) and looking at the lower bound in Theorem~\ref{thm:minkowskisecondrho}, we see that the factor $1/2$ is best possible.

We finally want to mention the related concept of a (general)
Korkine-Zolotarev reduced basis, as introduced in
\cite[Sect.~4]{kannanlovasz1988covering}: A basis $\vb_1,\dots,\vb_n$
of~$\Lambda$ is called \emph{Korkine-Zolotarev reduced} (with respect
to $K$) if, for $1\leq i\leq n$ and
$L_i=\lin\{\vb_1,\dots,\vb_{i-1}\} \in \grass(\Lambda,i-1)$, holds 
\begin{equation*}
           \|\vb_i|L_i^\perp\|_{K_c|L_i^\perp} =   \lambda(K_c|L_i^\perp,\Lambda|L_i^\perp).
\end{equation*}
In other words, $\vb_i|L_i^\perp$ is a shortest non-zero vector in the projected lattice $\Lambda|L_i^\perp$ with respect to the norm induced by $K_c|L_i^\perp$.
Hence, we have 
\[
\rho_{n-i+1}(K,\Lambda)\geq
\|\vb_{i}|L_{i}^\perp\|_{K_c|L_{i}^\perp},
\]
for each $1\leq i \leq n$.

The paper is organized as follows: In
Section~\ref{sect:transference-bounds} we study some basic properties
of the packing minima, in particular, relations to successive minima
and covering minima. The proof of the volume inequalities, that is, of Theorem~\ref{thm:minkowskisecondrho} is given in
Section~\ref{sect:volume-inequalities}, whereas Section~\ref{sect:main-result} deals with relations of the packing minima to the lattice point enumerator and the proofs of Theorem~\ref{theo:enumerator} and Theorem~\ref{thm:discrete-lower-bound-plane}.
In the last section, we study the packing minima for a specific family of lattices in order to show that they are different from the successive minima
and that they neither form a decreasing nor an increasing sequence in general (cf.~Proposition~\ref{prop:ex-ball-symmetric-lattice} and Corollary~\ref{cor:Lambda-ab-bounds}).

\section{Transference bounds for packing minima}
\label{sect:transference-bounds}

\noindent We first prove that the infimum in the definition of the packing minima is always attained by a lattice plane, and we use this fact without further reference below.

\begin{lemma}
\label{lem:rho-i-attained}
Let $K \in \Kn$ and $\Lambda \in \Lat^n$.
Then, for every $1 \leq i \leq n$, there exists a lattice plane $L \in \grass(\Lambda,n-i)$ such that
\[
  \begin{split} 
\rho_i(K,\Lambda) & = \min\{\rho>0:
(\rho\, K_c+L)\cap\Lambda \neq L \cap \Lambda\} \\ 
& = \lambda_1(K_c|L^\perp,\Lambda|L^\perp).
\end{split} 
\]
\end{lemma}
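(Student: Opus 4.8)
The plan is to reduce the infimum in the definition of $\rho_i(K,\Lambda)$ to a supremum of first minima taken \emph{only} over lattice planes, and then to show that this supremum is actually a maximum. For a fixed subspace $L\in\grass(\R^n,n-i)$ I set $f(L)=\inf\{\rho>0:(\rho\,K_c+L)\cap\Lambda\neq L\cap\Lambda\}$ and note that $v\in\rho\,K_c+L$ is equivalent to $v|L^\perp\in\rho\,(K_c|L^\perp)$; hence $f(L)$ is the length, in the gauge $\|\cdot\|_{K_c|L^\perp}$, of a shortest non-zero vector of the projected group $\Lambda|L^\perp$. I would then split into two cases. If $L$ is a lattice plane, then $\dim(L\cap\Lambda)=n-i$, so $\Lambda|L^\perp$ is a genuine full-rank lattice in $L^\perp$ and $f(L)=\lambda_1(K_c|L^\perp,\Lambda|L^\perp)$; moreover, since $\rho\,K_c+L$ is closed and $\Lambda$ is discrete, a shortest vector realizes the value, so the infimum is attained as a minimum. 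If $L$ is \emph{not} a lattice plane, then $\dim(L\cap\Lambda)\le n-i-1$, so $\Lambda|L^\perp$ is a finitely generated subgroup of $L^\perp\cong\R^i$ of rank at least $i+1$, hence non-discrete; as $K_c|L^\perp$ is a neighbourhood of the origin in $L^\perp$, every dilate $\rho\,(K_c|L^\perp)$ with $\rho>0$ meets $(\Lambda|L^\perp)\setminus\{\vnull\}$, whence $f(L)=0$.

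Consequently, the condition ``$(\rho\,K_c+L)\cap\Lambda\neq L\cap\Lambda$ for all $L$'' holds for every non-lattice plane as soon as $\rho>0$, and for a lattice plane exactly when $\rho\ge f(L)$. This identifies $\rho_i(K,\Lambda)=\sup\{\lambda_1(K_c|L^\perp,\Lambda|L^\perp):L\in\grass(\Lambda,n-i)\}$, and it only remains to prove that this supremum is attained; this is the heart of the matter.

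To obtain attainment I would bound $f(L)$ from above in terms of the covolume of the sublattice $\Lambda\cap L$. Using $\det(\Lambda|L^\perp)=\det(\Lambda)/\det(\Lambda\cap L)$ together with Minkowski's first theorem applied to the $i$-dimensional lattice $\Lambda|L^\perp$ and the body $K_c|L^\perp$, and fixing a Euclidean ball $r\,B_n\subseteq K_c$ so that $\vol_i(K_c|L^\perp)\ge r^i\,\omega_i$ with $\omega_i>0$ a constant independent of $L$, one gets $f(L)^i\le 2^i\det(\Lambda)/(\det(\Lambda\cap L)\,\vol_i(K_c|L^\perp))\le C/\det(\Lambda\cap L)$ for a constant $C=C(K,\Lambda,i)$. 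Fixing any single lattice plane $L_0$, every lattice plane $L$ with $f(L)\ge f(L_0)$ then satisfies $\det(\Lambda\cap L)\le C/f(L_0)^i=:D$. The final input is finiteness: there are only finitely many lattice planes with $\det(\Lambda\cap L)\le D$. I would see this by passing to the exterior power, representing the primitive sublattice $\Lambda\cap L$ by the decomposable vector $c_1\wedge\dots\wedge c_{n-i}\in\bigwedge^{n-i}\Lambda$ for a basis $c_1,\dots,c_{n-i}$ of $\Lambda\cap L$; its Euclidean norm equals $\det(\Lambda\cap L)$, and distinct lattice planes give distinct primitive vectors of the lattice $\bigwedge^{n-i}\Lambda$ up to sign. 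Since a lattice has only finitely many points of bounded norm, the family of lattice planes with $f(L)\ge f(L_0)$ is finite, so the supremum defining $\rho_i(K,\Lambda)$ is the maximum over this finite set and is attained at some $L\in\grass(\Lambda,n-i)$. For this $L$ both displayed equalities hold, by the first step. The main obstacle is exactly this attainment, that is, the passage from a supremum over the infinite and dense family of lattice planes to a genuine maximum, which the Minkowski bound together with the finiteness of bounded-covolume sublattices resolves.
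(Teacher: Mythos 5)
Your proof is correct and follows essentially the same route as the paper: non-lattice planes are dismissed because the projected group $\Lambda|L^\perp$ is then non-discrete, and attainment of the supremum follows from Minkowski's bound together with $\det(\Lambda|L^\perp)=\det(\Lambda)/\det(\Lambda\cap L)$, an inscribed ball $r B_n\subseteq K_c$, and the finiteness of lattice planes of bounded determinant. The only difference is cosmetic: you prove directly the two facts the paper cites, namely the non-discreteness of projections along non-lattice planes (where the paper invokes Bourbaki's structure theorem) and, via the exterior-power embedding, the finiteness of bounded-determinant lattice planes (where the paper cites Stuhler).
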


\begin{proof}
First, for  $L \in \grass(\R^n,n-i)$, we consider   
\begin{align*}
\bar\rho(L) &= \inf\{\rho>0 : (\rho\, K_c+L)\cap\Lambda \neq L \cap \Lambda\}\\
&= \inf\{\rho>0 : \rho\, (K_c|L^\perp)\cap\Lambda|L^\perp \neq \{\vnull\}\}.
\end{align*}
Every closed additive subgroup of~$\R^n$ is isomorphic to the cartesian product $\R^a \times \Z^b$, for some non-negative integers $a,b$ (cf.~\cite[Chapt.~VII]{bourbaki1966generaltop}).  
Therefore, if~$L$ is not a lattice plane, i.e., $L \notin
\grass(\Lambda,n-i)$, then the group~$\Lambda|L^\perp$ contains a
dense subset.
Since $K_c|L^\perp$ is full-dimensional in $L^\perp$, this means that $\bar\rho(L)=0$ for every $L\in
\grass(\R^n,n-i)\setminus \grass(\Lambda,n-i)$.
Hence
\begin{equation*}
  \begin{split}
  \rho_i(K,\Lambda) & = \sup\{\bar\rho(L) : L\in
  \grass(\Lambda,n-i)\}\\ & =
  \sup\{\lambda_1(K_c|L^\perp,\Lambda|L^\perp) : L \in \grass(\Lambda,n-i)\}.
  \end{split}
\end{equation*}
In order to show that the $\sup$ is indeed a $\max$, let 
$r\ban\subseteq K_c$ for some $r>0$, and we fix a lattice plane $\ov L\in
\grass(\Lambda,n-i)$ and set $\lambda := \lambda_1(K_c|\ov L^\perp,\Lambda|\ov L^\perp)$. 
By Minkowski's upper bound in \eqref{eq:minkowskisecond}, for any $L\in
\grass(\Lambda,n-i)$, we have the bound 
\begin{equation*}
  \lambda_1(K_c|L^\perp,\Lambda|L^\perp)\leq
  \left(\frac{\det(\Lambda|L^\perp)}{\vol_i(K_c|L^\perp)}\right)^\frac{1}{i}, 
\end{equation*}
where $\vol_i(\cdot)$ denotes the $i$-dimensional volume.
Because $\vol_i(K_c|L^\perp) \geq \vol_i(rB_n|L^\perp) = r^i\,\vol_i(B_i)=:\gamma(i,r)$ and
$\det(\Lambda|L^\perp)=\det(\Lambda)/\det(\Lambda\cap L)$ (cf.~\cite[Prop.~1.9.7]{martinet2003perfect}), we get the
upper bound
\begin{equation*}
  \lambda_1(K_c|L^\perp,\Lambda|L^\perp) \leq
  \left(\frac{1}{\gamma(i,r)}\frac{\det(\Lambda)}{\det(\Lambda\cap L)}\right)^\frac{1}{i}.
\end{equation*}
Thus, if
\begin{equation}
   \det(\Lambda\cap L) > \frac{\det(\Lambda)}{\gamma(i,r)\lambda^i}
\label{eq:obbound}
\end{equation}
then $\lambda_1(K_c|L^\perp,\Lambda|L^\perp) < \lambda=\lambda_1(K_c|\ov
L^\perp,\Lambda|\ov L^\perp)$. Hence, in order to determine
$\sup\{\lambda_1(K_c|L^\perp,\Lambda|L^\perp) : L \in
\grass(\Lambda,n-i)\}$ it suffices to consider only those lattice planes $L \in \grass(\Lambda,n-i)$ such that $\det(\Lambda\cap L)$ is upper bounded by the right hand side in \eqref{eq:obbound}.
We are now done since there are only finitely many lattice planes with bounded determinant (cf.~\cite[Prop.~1]{stuhler1976eine}).
\end{proof}

A fundamental result in the Geometry of Numbers is due to Banaszczyk~\cite{banaszczyk1996inequalities} who proved that for every origin-symmetric convex body~$K \in \Kn$, every lattice~$\Lambda \in \Lat^n$, and every $1 \leq i \leq n$, one has
\begin{align}
1 \leq \lambda_i(K,\Lambda) \cdot \lambda_{n-i+1}(\dual{K},\dual{\Lambda}) \leq \mathrm{c} n(1+\log(n)),\label{eq:banaszczyk}
\end{align}
for some absolute constant~$\mathrm{c} \geq 0$.
The lower bound is actually a classical result due to Mahler (cf.~\cite[Thm.~23.2]{gruber2007convex}).
Such relations between primal and polar bodies and lattices are usually called \emph{transference bounds}.

In the following we study similar relations between the packing minima of a convex body and its successive minima and covering minima.
We first prove Proposition~\ref{prop:succmin-bounds-intro}, illustrating that the sequence of packing minima interpolates between the successive minima of a convex body and the successive minima of its polar.


\begin{proof}[Proof of Proposition~\ref{prop:succmin-bounds-intro}]
Fix $i \in \{1,\ldots,n\}$.
By definition, $\rho_i(K,\Lambda)$ is the smallest number~$\rho \geq 0$ such that all projections of $\rho K_c$ along $(n-i)$-dimensional lattice planes $L$ of $\Lambda$ contain a non-trivial lattice point of the $i$-dimensional lattice $\Lambda|L^\perp$.
Now, let $\vv_1,\ldots,\vv_{n-i+1} \in \lambda_{n-i+1}(K_c,\Lambda) K_c \cap \Lambda$ be linearly independent lattice points.
Then, for every lattice plane $L \in \grass(\Lambda,n-i)$, we have
$\vv_j|L^\perp \in \Lambda|L^\perp \setminus \{\vnull\}$, for at least one index $j \in \{1,\ldots,n-i+1\}$.
Thus,
\begin{align*}
\rho_i(K,\Lambda) \leq \lambda_{n-i+1}(K_c,\Lambda).
\end{align*}
Moreover, $\rho_n(K,\Lambda)=\lambda_{1}(K_c,\Lambda)$ is clear from the definition.

On the other hand, for any  $L \in
\grass(\Lambda,n-i)$ we have by \eqref{eq:banaszczyk}
\begin{equation}
  \lambda_i(\dual{(K_c|L^\perp)},\dual{(\Lambda|L^\perp)}) \cdot 
  \lambda_1(K_c|L^\perp,\Lambda|L^\perp) \geq 1.
\label{eq:3}   
\end{equation}
Basic identities for polar bodies and lattices are $\dual{(K_c|L^\perp)} = \dual{K_c}\cap L^\perp$ and $\dual{(\Lambda|L^\perp)} = \dual{\Lambda}\cap
  L^\perp$ (see, e.g.,~\cite[Prop.~1.3.4]{martinet2003perfect}).
  Thus~\eqref{eq:3} reads
\begin{equation}
  \lambda_1(K_c|L^\perp,\Lambda|L^\perp)
  \geq \frac{1}{\lambda_i(\dual{K_c}\cap L^\perp,\dual{\Lambda}\cap
    L^\perp)}.
\label{eq:2}  
\end{equation} 
Hence, for $1\leq i \leq n$, 
\begin{equation*}
  \begin{split}
\rho_i(K,\Lambda) & = \max\left\{
   \lambda_1(K_c|L^\perp,\Lambda|L^\perp) : L \in
   \grass(\Lambda,n-i)\right\} \\ & \geq  
   \frac{1}{\min\{\lambda_i(\dual{K_c}\cap L',\dual{\Lambda}\cap L') : L' \in \grass(\dual{\Lambda},i)\}}  = \frac{1}{\lambda_i(\dual{K_c},\dual{\Lambda})}.
\end{split} 
\end{equation*} 
For $i=1$ we have equality in~\eqref{eq:3} and~\eqref{eq:2}, because $L^\perp$ is a line in this case.
Thus, $\rho_1(K,\Lambda)=1/\lambda_1(\dual{K_c},\dual{\Lambda})$ as
claimed (cf.~\eqref{eq:rho-1-n} and~\eqref{eq:mu1-lat-width}).
\end{proof}

We conjecture that there is actually a complete analog of the lower bound in~\eqref{eq:banaszczyk}, meaning that \emph{both} successive minima can be replaced by the corresponding packing minimum.

\begin{conj}
\label{conj:rho-transference}
Let $K \in \Kn$ and $\Lambda \in \Lat^n$.
Then, for $1 \leq j \leq n$, we have
\[
\rho_j(K,\Lambda) \cdot \rho_{n-j+1}(\dual{K_c},\dual{\Lambda}) \geq \frac12.
\]
\end{conj}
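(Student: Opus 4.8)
The plan is to exploit the two maximum representations in~\eqref{eq:def-packing-minima} together with the fact that for the \emph{origin-symmetric} body $\dual{K_c}$ one has $(\dual{K_c})_c = 2\dual{K_c}$. Writing out $\rho_{n-j+1}(\dual{K_c},\dual\Lambda)$ via~\eqref{eq:def-packing-minima} and pulling the factor $2$ out by homogeneity gives
\begin{equation*}
\rho_{n-j+1}(\dual{K_c},\dual\Lambda) = \tfrac12\max\left\{\lambda_1(\dual{K_c}|L^\perp,\dual\Lambda|L^\perp) : L \in \grass(\dual\Lambda,j-1)\right\},
\end{equation*}
while, after renaming $N = L^\perp$ and using that $L \in \grass(\Lambda,n-j)$ if and only if $L^\perp \in \grass(\dual\Lambda,j)$,
\begin{equation*}
\rho_j(K,\Lambda) = \max\left\{\lambda_1(K_c|N,\Lambda|N) : N \in \grass(\dual\Lambda,j)\right\}.
\end{equation*}
Consequently, it suffices to produce \emph{one} pair of lattice planes $N \in \grass(\dual\Lambda,j)$ and $L \in \grass(\dual\Lambda,j-1)$ with
\begin{equation}
\lambda_1(K_c|N,\Lambda|N)\cdot\lambda_1(\dual{K_c}|L^\perp,\dual\Lambda|L^\perp) \geq 1,\label{eq:proposal-dagger}
\end{equation}
since then $2\,\rho_j(K,\Lambda)\,\rho_{n-j+1}(\dual{K_c},\dual\Lambda)$ is bounded below by the left-hand side, yielding the claim. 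I also note that the conjecture is invariant under the substitution $(K,\Lambda,j) \mapsto (\dual{K_c},\dual\Lambda,n-j+1)$ (using $\dual{\dual\Lambda}=\Lambda$ and $\rho_j(\tfrac12 K_c,\Lambda)=\rho_j(K,\Lambda)$), so I may assume $j \leq (n+1)/2$, which makes the dimension $n-j+1$ of $L^\perp$ at least the dimension $j$ of $N$.

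To analyse~\eqref{eq:proposal-dagger} I would first convert the section and the projection factors into a single lattice. Using the polarity identities $K_c|N = \dual{(\dual{K_c}\cap N)}$ and $\Lambda|N = \dual{(\dual\Lambda\cap N)}$ (polars taken inside $N$) and applying Mahler's lower bound from~\eqref{eq:banaszczyk} in the $j$-dimensional space $N$ to the symmetric body $\dual{K_c}\cap N$, one gets
\begin{equation*}
\lambda_1(K_c|N,\Lambda|N) \geq \frac{1}{\lambda_j(\dual{K_c}\cap N,\dual\Lambda\cap N)}.
\end{equation*}
For the second factor I would fix a Korkine--Zolotarev reduced basis $\vb_1,\dots,\vb_n$ of $\dual\Lambda$ with respect to $\dual{K_c}$ and choose $L = \lin\{\vb_1,\dots,\vb_{j-1}\}$ and $N = \lin\{\vb_1,\dots,\vb_j\}$. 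By the defining property of such a basis, $\vb_j|L^\perp$ is a shortest non-zero vector of $\dual\Lambda|L^\perp$, so the second factor equals $\|\vb_j|L^\perp\|_{\dual{K_c}|L^\perp}$. Since $\vb_j \in N$, this is precisely the length of the last Gram--Schmidt-type vector of the basis inside the $j$-dimensional lattice $(\dual{K_c}\cap N,\dual\Lambda\cap N)$, and~\eqref{eq:proposal-dagger} reduces to the single comparison
\begin{equation}
\|\vb_j|L^\perp\|_{\dual{K_c}|L^\perp} \geq \lambda_j(\dual{K_c}\cap N,\dual\Lambda\cap N)\label{eq:proposal-crux}
\end{equation}
in dimension $j$. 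For the boundary indices $j=1$ and (by the symmetry above) $j=n$ the relevant space $N\cap L^\perp$ is a line, both estimates are one-dimensional identities, and one recovers the sharp value $\rho_j(K,\Lambda)\,\rho_{n-j+1}(\dual{K_c},\dual\Lambda) = \tfrac12$.

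The hard part is exactly~\eqref{eq:proposal-crux}, and with the nested choice $L\subseteq N$ it is in fact \emph{false} in general: a last Gram--Schmidt vector can be strictly shorter than the $j$-th successive minimum because successive-minima directions need not be orthogonal, and then the two estimates above compound in the wrong direction. This is the same non-orthogonality phenomenon that is responsible for the logarithmic loss in the upper transference bound~\eqref{eq:banaszczyk}, and it is the reason the statement is only a conjecture. To obtain the clean constant I would therefore abandon the nested choice and look for a \emph{coordinated but non-nested} pair $(N,L)$ --- for instance, selecting $N$ so that $\dual\Lambda\cap N$ attains its successive minima on well-separated directions and $L$ so that the complementary projection isolates the direction carrying $\lambda_j$ --- or, more promisingly, replacing the crude Mahler step by a sharper transference that already carries the constant $\tfrac12$ at the level of the projected body $K_c|N$ and the dual section $\dual{K_c}\cap N$. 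Establishing~\eqref{eq:proposal-dagger} for a suitable pair, uniformly over the interior indices $2 \leq j \leq n-1$, is the main obstacle; techniques in the spirit of Banaszczyk (Gaussian measures on lattices), which are tailored precisely to such transference inequalities, seem the most likely route to push the argument through without losing the factor.
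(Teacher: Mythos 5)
The statement you were asked to prove is, in the paper itself, only a \emph{conjecture}: the paper contains no proof of it, and offers as evidence the equality cases $j\in\{1,n\}$ (which follow from~\eqref{eq:rho-1-n} and~\eqref{eq:mu1-lat-width}, since $\rho_1(K,\Lambda)\cdot\rho_n(\dual{K_c},\dual{\Lambda})=\frac12=\rho_n(K,\Lambda)\cdot\rho_1(\dual{K_c},\dual{\Lambda})$) together with an explicit verification for the Euclidean ball and the equiangular lattices $\Lambda_{a,b}$ in Corollary~\ref{cor:Lambda-ab-bounds}, proved there by computing the projected lattices $\Lambda_{a,b}|L_j^\perp$ and the polar lattice $\dual{\Lambda_{a,b}}=\Lambda_{\bar a,\bar b}$ by hand. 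Your proposal, accordingly, does not (and explicitly does not claim to) establish the statement, so it cannot be counted as a proof; but its honest conclusion --- that the statement remains open --- is exactly the status the paper assigns to it.

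That said, the partial content of your attempt is sound and worth recording. The dual reformulations of both packing minima are correct (including the factor $\frac12$ coming from $(\dual{K_c})_c = 2\dual{K_c}$, and the fact that $L\in\grass(\Lambda,n-j)$ iff $L^\perp\in\grass(\dual{\Lambda},j)$), as is the symmetry reduction to $j\le(n+1)/2$ via $\dual{(2\dual{K_c})}=\frac12 K_c$ and the invariance of $\rho_j$ under replacing $K$ by any body with the same difference body. The Mahler step $\lambda_1(K_c|N,\Lambda|N)\ge \lambda_j(\dual{K_c}\cap N,\dual{\Lambda}\cap N)^{-1}$ is the same mechanism the paper uses in proving Proposition~\ref{prop:succmin-bounds-intro}. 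Your identification of the obstruction is also correct: for a Korkine--Zolotarev reduced basis, projection can only shorten vectors, so $\|\vb_j|L^\perp\|_{\dual{K_c}|L^\perp}=\lambda_1(\dual{K_c}|L^\perp,\dual{\Lambda}|L^\perp)\le\lambda_j(\dual{K_c}\cap N,\dual{\Lambda}\cap N)$ holds with the \emph{opposite} inequality to the one your argument needs, so the nested choice $L\subset N$ compounds the two estimates in the wrong direction. In short: no gap relative to the paper (there is nothing in the paper to fall short of), but also no proof --- consistent with the conjectural status of the statement.
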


Compared to~\eqref{eq:banaszczyk}, the lower bound equals $1/2$ here, because with our definition of the difference body, we have $K_c = 2 K$, when $K$ is already origin-symmetric.
We provide some evidence for this inequality in Section~\ref{sect:reg-simplex-lattice}, in particular in Corollary~\ref{cor:Lambda-ab-bounds}.

Concerning relations between packing minima and covering minima, we establish the following linear inequalities.
In view of Proposition~\ref{prop:succmin-bounds-intro}, these bounds improve the corresponding relations between the covering minima and the successive minima (cf.~\cite[Thm.~23.4]{gruber2007convex} and~\cite[Lem.~(2.4) \& (2.5)]{kannanlovasz1988covering}).

\begin{proposition}
\label{prop:covmin-bounds}
Let $K \in \Kn$ and $\Lambda \in \Lat^n$.
\begin{enumerate}[label=\roman*)]
 \item For $1 \leq i \leq n-1$, we have
  \begin{align*}
      \mu_{i+1}(K,\Lambda)\leq \mu_i(K,\Lambda) + \rho_{i+1}(K,\Lambda).
  \end{align*}
 \item For $1 \leq i \leq n$, we have
  \begin{align*}
      \max_{1 \leq j \leq i}\rho_j(K,\Lambda) \leq \mu_i(K,\Lambda) \leq \sum_{j=1}^i \rho_j(K,\Lambda).
  \end{align*}
\end{enumerate}
\end{proposition}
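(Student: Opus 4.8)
The plan is to derive part~ii) from part~i) together with two elementary facts, and to prove the harder part~i) by a one-step dimension reduction along a shortest lattice vector.

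Granting part~i), I would first settle part~ii). The upper bound $\mu_i(K,\Lambda) \le \sum_{j=1}^i \rho_j(K,\Lambda)$ follows by induction on $i$, the base case being $\mu_1 = \rho_1$ from \eqref{eq:rho-1-n} and the inductive step being precisely part~i). For the lower bound $\max_{1 \le j \le i} \rho_j(K,\Lambda) \le \mu_i(K,\Lambda)$, I would use that the covering minima form an increasing sequence, so that it suffices to prove $\rho_j(K,\Lambda) \le \mu_j(K,\Lambda)$ for each $j$. By \eqref{eq:def-packing-minima} and \eqref{eq:def-covering-minima-projection} both sides are maxima over the same Grassmannian $\grass(\Lambda, n-j)$, of $\lambda_1(K_c|L^\perp, \Lambda|L^\perp)$ and of $\mu(K|L^\perp, \Lambda|L^\perp)$ respectively; hence it is enough to establish, in each $j$-dimensional projected space, the inequality $\lambda_1(B_c, \Gamma) \le \mu(B,\Gamma)$ for $B = K|L^\perp$ and $\Gamma = \Lambda|L^\perp$ (using $K_c|L^\perp = B_c$). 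This is the statement that the packing radius never exceeds the covering radius: if $\mu(B,\Gamma)\,B_c$ contained no nonzero lattice vector, then the closed translates $\{\vv + \mu(B,\Gamma)\,B : \vv \in \Gamma\}$ would be pairwise disjoint and, by definition of the covering radius, cover $\R^m$; but a locally finite family of pairwise disjoint translates of a compact convex body cannot cover the connected space $\R^m$, a contradiction.

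For part~i) I would reduce to a single lattice plane. By \eqref{eq:def-covering-minima-projection} there is $M \in \grass(\Lambda, n-i-1)$ with $\mu_{i+1}(K,\Lambda) = \mu(B,\Gamma)$, where $B = K|M^\perp$ and $\Gamma = \Lambda|M^\perp$ live in the $(i+1)$-dimensional space $M^\perp$. Choose a shortest nonzero $\vb \in \Gamma$ with respect to $B_c = K_c|M^\perp$, so that $\|\vb\|_{B_c} = \lambda_1(B_c,\Gamma) \le \rho_{i+1}(K,\Lambda)$, and let $\pi$ be the orthogonal projection of $M^\perp$ onto $W := \vb^\perp \cap M^\perp$. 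Lifting $\vb$ to a vector of $\Lambda$ shows that $M' := M \oplus \R\vb$ is a lattice plane in $\grass(\Lambda, n-i)$ with $(M')^\perp = W$, so that $\mu(\pi(B),\pi(\Gamma)) = \mu(K|W, \Lambda|W) \le \mu_i(K,\Lambda)$. Thus part~i) reduces to the one-step covering inequality
\[
\mu(B,\Gamma) \le \mu(\pi(B),\pi(\Gamma)) + \lambda_1(B_c,\Gamma).
\]

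I expect this one-step inequality to be the main obstacle, the difficulty being that $B$ need not be origin-symmetric: decomposing a covering point into a part in $\mu_0\,B$ (with $\mu_0 := \mu(\pi(B),\pi(\Gamma))$) plus a residual multiple of $\vb$ leaves an error term of the shape $\mu_0\,B + [-\tfrac12,\tfrac12]\,\vb$, which is not a dilate of $B$. To handle this I would use the translation-invariance of the covering radius to center the longest $\vb$-parallel chord of $B$ at the origin; this chord has $\vb$-length $1/\lambda$, where $\lambda := \lambda_1(B_c,\Gamma)$, and the function assigning to a base point $\vw \in \pi(B)$ the $\vb$-length of the chord of $B$ over $\vw$ is concave and equals $1/\lambda$ at $\vw = \vnull$. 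Writing an arbitrary point as a lattice vector plus a point of $\mu_0\,B$ plus a multiple of $\vb$, I would slide along the fiber by an integer multiple of $\vb$ to land in $\mu\,B$ with $\mu = \mu_0 + \lambda$. Concavity guarantees that over any base point arising from $\mu_0\,B$ the chord of $\mu\,B$ has $\vb$-length at least $(\mu - \mu_0)/\lambda = 1$, i.e.\ at least the spacing of the fiber lattice $\Z\vb$; hence a suitable integer translate lands in $\mu\,B$. This proves the covering at level $\mu_0 + \lambda$ and completes the argument.
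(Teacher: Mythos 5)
Your proposal is correct and follows essentially the same route as the paper: part ii) is proved identically (induction via part i) for the upper bound; the packing-radius $\leq$ covering-radius fact in the projected space plus monotonicity for the lower bound), and for part i) the paper simply observes that Kannan--Lov\'{a}sz's proof of $\mu_{i+1}(K,\Lambda)\leq\mu_i(K,\Lambda)+\lambda_{n-i}(K_c,\Lambda)$ applies verbatim with $\rho_{i+1}$, which is exactly your argument---project along a shortest vector of the optimal plane's projected lattice (whose $K_c|M^\perp$-length is by definition at most $\rho_{i+1}(K,\Lambda)$) and use the chord-concavity trick to handle non-symmetric bodies. The only difference is one of self-containment: you write out the two ingredients that the paper outsources to references, namely the Kannan--Lov\'{a}sz one-step covering inequality and the basic inequality $\lambda_1(M_c,\Lambda)\leq\mu(M,\Lambda)$, both of which you prove correctly.
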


\begin{proof}
i): The weaker bound $\mu_{i+1}(K,\Lambda) \leq \mu_i(K,\Lambda) + \lambda_{n-i}(K_c,\Lambda)$ has been proven by Kannan \& Lov\'{a}sz~\cite[Lem.~(2.5)]{kannanlovasz1988covering}.
However, their proof applies verbatim to the packing minima and establishes the claimed upper bound.

ii): The upper bound follows from the identity $\mu_1(K,\Lambda) =\rho_1(K,\Lambda)$ in~\eqref{eq:rho-1-n} and a successive application of the bound in~i).
For the lower bound, let $j \in \{1,\ldots,i\}$ and let $L \in \grass(\Lambda,n-j)$ be such that $\rho_j(K,\Lambda) = \lambda_1(K_c|L^\perp,\Lambda|L^\perp)$.
Applying the basic inequality $\lambda_1(M_c,\Lambda) \leq \mu_n(M,\Lambda)$ for $M \in \Kn$ and $\Lambda \in \Lat^n$ (cf.~\cite[\S 13.5]{gruberlekker1987geometry}) to $K_c|L^\perp$ and
$\Lambda|L^\perp$, and combining this with the definition and the monotonicity of the covering minima, we have 
\[
\rho_j(K,\Lambda) = \lambda_1(K_c|L^\perp,\Lambda|L^\perp) \leq \mu_j(K|L^\perp,\Lambda|L^\perp) \leq \mu_j(K,\Lambda) \leq \mu_i(K,\Lambda),
\]
as claimed.
\end{proof}

Based on the transference bound~\eqref{eq:banaszczyk}, we may complement the bounds in Proposition~\ref{prop:succmin-bounds-intro} by reverse inequalities.

\begin{proposition}
\label{prop:succmin-bounds-reverse}
Let $K \in \Kn$ and $\Lambda \in \Lat^n$.
Then, for $1\leq i\leq n$,
  \begin{equation*}
   \frac{\lambda_{n-i+1}(K_c,\Lambda)}{\mathrm{c} n(1+\log(n))} \leq \rho_i(K,\Lambda) \leq \frac{\mathrm{c} i (1+\log(i))}{\lambda_i(\dual{K_c},\dual{\Lambda})},
  \end{equation*}
where $\mathrm{c}$ is an absolute constant that is independent from~$K$ and~$\Lambda$.
\end{proposition}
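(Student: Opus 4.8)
The plan is to derive both inequalities by combining Proposition~\ref{prop:succmin-bounds-intro} with Banaszczyk's transference bound~\eqref{eq:banaszczyk}, applied in the appropriate dimension. Fix $i\in\{1,\dots,n\}$. For the \emph{lower bound}, I would start from the left inequality in Proposition~\ref{prop:succmin-bounds-intro}, namely $\rho_i(K,\Lambda)\geq 1/\lambda_i(\dual{K_c},\dual{\Lambda})$, and then estimate the right-hand side from below. Since $K_c$ is origin-symmetric, I apply~\eqref{eq:banaszczyk} to $K_c$ and $\Lambda$ with the index $n-i+1$ in place of~$i$; using $n-(n-i+1)+1=i$, this gives
\[
\lambda_{n-i+1}(K_c,\Lambda)\cdot\lambda_i(\dual{K_c},\dual{\Lambda})\leq \mathrm{c}\,n(1+\log(n)).
\]
Rearranging yields $1/\lambda_i(\dual{K_c},\dual{\Lambda})\geq \lambda_{n-i+1}(K_c,\Lambda)/(\mathrm{c}\,n(1+\log(n)))$, and chaining this with the lower bound on $\rho_i$ produces exactly the claimed left inequality.

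For the \emph{upper bound}, the crucial observation is that the dimensional factor $i(1+\log(i))$ forces us to invoke Banaszczyk in the projected dimension~$i$ rather than in the ambient dimension~$n$. By Lemma~\ref{lem:rho-i-attained} I would choose a lattice plane $L\in\grass(\Lambda,n-i)$ realizing $\rho_i(K,\Lambda)=\lambda_1(K_c|L^\perp,\Lambda|L^\perp)$. Since $L^\perp$ is $i$-dimensional, $\Lambda|L^\perp$ is a lattice in $L^\perp\cong\R^i$ and $K_c|L^\perp$ is an origin-symmetric convex body there, so applying~\eqref{eq:banaszczyk} in dimension~$i$ with the first index gives
\[
\rho_i(K,\Lambda)=\lambda_1(K_c|L^\perp,\Lambda|L^\perp)\leq \frac{\mathrm{c}\,i(1+\log(i))}{\lambda_i(\dual{(K_c|L^\perp)},\dual{(\Lambda|L^\perp)})}.
\]

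It then remains to replace the slice data by the global polar data. Using the identities $\dual{(K_c|L^\perp)}=\dual{K_c}\cap L^\perp$ and $\dual{(\Lambda|L^\perp)}=\dual{\Lambda}\cap L^\perp$ (exactly as in the proof of Proposition~\ref{prop:succmin-bounds-intro}), the denominator becomes $\lambda_i(\dual{K_c}\cap L^\perp,\dual{\Lambda}\cap L^\perp)$. I would then establish the elementary monotonicity estimate
\[
\lambda_i(\dual{K_c}\cap L^\perp,\dual{\Lambda}\cap L^\perp)\geq \lambda_i(\dual{K_c},\dual{\Lambda}),
\]
which holds because any $i$ linearly independent points of $\dual{\Lambda}\cap L^\perp$ contained in $\lambda(\dual{K_c}\cap L^\perp)$ are in particular $i$ linearly independent points of $\dual{\Lambda}$ contained in $\lambda\,\dual{K_c}$. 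Substituting this lower bound on the denominator yields the claimed right inequality. The only genuinely delicate point is the realization that Banaszczyk must be applied within the $i$-dimensional projected lattice, so that the dimensional factor is $i(1+\log(i))$ rather than $n(1+\log(n))$; once the optimal slice $L$ is fixed, the polar identities and the sublattice comparison are routine.
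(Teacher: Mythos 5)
Your proposal is correct and follows essentially the same route as the paper: both halves combine Banaszczyk's bound~\eqref{eq:banaszczyk} with Proposition~\ref{prop:succmin-bounds-intro} for the lower bound, and for the upper bound both fix an attaining lattice plane via Lemma~\ref{lem:rho-i-attained}, apply Banaszczyk in the projected dimension~$i$, and pass to the global polar data via the identities $\dual{(K_c|L^\perp)}=\dual{K_c}\cap L^\perp$, $\dual{(\Lambda|L^\perp)}=\dual{\Lambda}\cap L^\perp$ and the monotonicity of successive minima under slicing. Your write-up even spells out (correctly, with $L^\perp$) the slice-versus-ambient comparison that the paper leaves implicit.
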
 

\begin{proof}
Let $L$ be an $(n-i)$-dimensional lattice plane of~$\Lambda$ attaining
$\rho_i(K,\Lambda)$.  
Then, $K_c|L^\perp$ is $i$-dimensional and by Banaszczyk's inequality~\eqref{eq:banaszczyk}, we have
\begin{align*}
\rho_i(K,\Lambda) &= \lambda_1(K_c|L^\perp,\Lambda|L^\perp) \leq \frac{\mathrm{c} i(1+\log(i))}{\lambda_i(\dual{(K_c|L^\perp)},\dual{(\Lambda|L^\perp)})} \\
&= \frac{\mathrm{c} i(1+\log(i))}{\lambda_i(\dual{K_c}\cap L,\dual{\Lambda}\cap L)} \leq \frac{\mathrm{c} i(1+\log(i))}{\lambda_i(\dual{K_c},\dual{\Lambda})}.
\end{align*}
The claimed lower bound follows by combining Banaszczyk's estimate~\eqref{eq:banaszczyk} with the lower bound in Proposition~\ref{prop:succmin-bounds-intro}.
In fact, we have
\[
\rho_i(K,\Lambda) \geq \frac{1}{\lambda_i(\dual{K_c},\dual{\Lambda})} \geq \frac{\lambda_{n-i+1}(K_c,\Lambda)}{\mathrm{c} n(1+\log(n))},
\]
for every $1 \leq i \leq n$.
\end{proof}

\section{Volume inequalities for packing minima}
\label{sect:volume-inequalities}

\noindent In this section, we are concerned with relating the volume of a convex body to its sequence of packing minima.
First, we strengthen some classical results from the Geometry of
Numbers, and second we discuss how they relate to
long-standing and famous conjectures of Mahler and Makai Jr.~in the field.

We begin with observing that due to $\lambda_1(K_c,\Lambda) = \rho_n(K,\Lambda)$, we may formulate Minkowski's 1st Theorem (cf.~\cite[Sect.~22 \& 23]{gruber2007convex}) as
\begin{align}
\frac{\vol(K)}{\det(\Lambda)} \leq \left(\frac{1}{\rho_n(K,\Lambda)}\right)^n = \left(\frac{1}{\lambda_1(K_c,\Lambda)}\right)^n,\label{eq:minkowskis-1st-rho-n}
\end{align}
where $K \in \Kn$ and $\Lambda \in \Lat^n$.

Just as for the successive minima, this is the only possible upper bound on the volume involving only one of the packing minima.
Indeed, by the degree of homogeneity of each of the $\rho_i(K,\Lambda)$, any such upper bound must be of the form $\vol(K) / \det(\Lambda) \leq \mathrm{c}_n \cdot \rho_i(K,\Lambda)^{-n}$, for some constant $\mathrm{c}_n \geq 0$.
However, setting $r_1 = \ldots = r_{n-1} = M^{-1}$ and $r_n = M^{n-1}$, for $M \geq 1$, the boxes $C(\vr)$ from the introduction have parameters $\vol(C(\vr)) = 2^n r_1\cdot\ldots\cdot r_n = 2^n$ and $\rho_i(C(\vr),\Z^n) = M/2$, for $1 \leq i \leq n-1$, preventing any such bound to hold in general, when $i \neq n$.


The situation is similar with respect to lower bounds on the
volume.
Suitable instances of $C(\vr)$ show again that the only possible inequality is
\begin{align}
\mathrm{c}_n \cdot \left(\frac{1}{\rho_1(K,\Lambda)}\right)^n \leq \frac{\vol(K)}{\det(\Lambda)}.\label{eq:makai-loose}
\end{align}
Due to the identities
\[
\frac{1}{\rho_1(K,\Lambda)} = \frac{1}{\mu_1(K,\Lambda)} = \lambda_1(\dual{K_c},\dual{\Lambda}) = 2 \cdot \rho_n(\dual{K_c},\dual{\Lambda}),
\]
this inequality can be understood in various different contexts.
The determination of the best possible constant $\mathrm{c}_n$ for the inequality~\eqref{eq:makai-loose} to hold in general is still an open problem.
In fact, in the language of so-called thinnest non-separable
arrangements, that is, within the context of $\mu_1(K,\Lambda)$, it is
an old conjecture of Makai Jr.~that the best bounds are as follows
(cf.~\cite{gonzalezmerinoschymura2017ondensities}):

\begin{conj}[{Makai Jr.~\cite{makai1978on}}]
\label{conj:makai}
Let $K \in \Kn$ and let $\Lambda \in \Lat^n$ be a lattice.
Then,
\begin{align}
\frac{1}{n!} \cdot \lambda_1(\dual{K_c},\dual{\Lambda})^n  &\leq    \frac{\vol(K)}{\det(\Lambda)}, \quad\text{ if }K=-K,\quad\text{ and}\label{eq:makai-symmetric}\\
\frac{n+1}{2^nn!} \cdot \lambda_1(\dual{K_c},\dual{\Lambda})^n  &\leq    \frac{\vol(K)}{\det(\Lambda)}, \quad\text{ for general } K.\label{eq:makai-general}
\end{align}
Moreover, the first inequality is attained for $\Lambda = \Z^n$ and the standard crosspolytope $K=\conv(\{\pm \ve_1,\ldots,\pm \ve_n\})$, and the second inequality is attained for $\Lambda = \Z^n$ and the simplex $K=\conv(\{\ve_1,\ldots,\ve_n,-(\ve_1+\ldots+\ve_n)\})$.
\end{conj}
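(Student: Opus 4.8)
The plan is to establish the origin-symmetric bound \eqref{eq:makai-symmetric} first, by reducing it to a classical volume-product estimate, and then to treat the general bound \eqref{eq:makai-general} separately. For symmetric $K$ we have $K_c = 2K$, hence $\dual{K_c} = \tfrac12\dual{K}$ and $\lambda_1(\dual{K_c},\dual{\Lambda}) = 2\,\lambda_1(\dual{K},\dual{\Lambda})$. Applying Minkowski's first theorem in the form \eqref{eq:minkowskis-1st-rho-n} to the origin-symmetric body $\dual{K}$ and the lattice $\dual{\Lambda}$, and using $\det(\dual{\Lambda}) = 1/\det(\Lambda)$, I would derive $\lambda_1(\dual{K},\dual{\Lambda})^n \le 2^n/\big(\det(\Lambda)\,\vol(\dual{K})\big)$. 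Substituting this into the left-hand side of \eqref{eq:makai-symmetric} reduces the whole inequality to the volume-product estimate $\vol(K)\,\vol(\dual{K}) \ge 4^n/n!$, which is exactly the origin-symmetric \emph{Mahler conjecture}. This already explains why \eqref{eq:makai-symmetric} and Mahler's problem share the crosspolytope/cube extremizer, along which Minkowski's theorem and the volume product are simultaneously tight. Thus I obtain a clean \emph{conditional} proof, but not an unconditional one, since Mahler's conjecture is itself open.

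To argue instead with the tools of the present paper, I would invoke the lower bound of Theorem~\ref{thm:minkowskisecondrho}, namely $\vol(K)/\det(\Lambda) \ge \tfrac1{n!}\prod_{i=1}^n \rho_i(K,\Lambda)^{-1}$. Since $\rho_1(K,\Lambda)^{-1} = \lambda_1(\dual{K_c},\dual{\Lambda})$, the bound \eqref{eq:makai-symmetric} would follow at once from the single geometric-mean estimate
\[
\rho_1(K,\Lambda) \ \ge\ \Big(\textstyle\prod_{i=1}^n \rho_i(K,\Lambda)\Big)^{1/n}.
\]
For the crosspolytope all packing minima coincide ($\rho_i = \tfrac12$ for every $i$, by Proposition~\ref{prop:succmin-bounds-intro}), so the estimate holds with equality and Theorem~\ref{thm:minkowskisecondrho} is already sharp there. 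The main obstacle is that this estimate cannot hold for arbitrary $K$: for the simplex $S = \conv(\{\ve_1,\dots,\ve_n,-(\ve_1+\dots+\ve_n)\})$ and $\Lambda = \Z^n$, the conjectured equality case of \eqref{eq:makai-general} gives $\vol(S) = \tfrac{n+1}{2^n n!}\lambda_1(\dual{S_c},\Z^n)^n$, which is strictly smaller than $\tfrac1{n!}\lambda_1(\dual{S_c},\Z^n)^n$ for $n \ge 2$, forcing $\prod_i \rho_i(S,\Z^n) > \rho_1(S,\Z^n)^n$. Hence the non-monotonicity of the packing minima (cf.~Section~\ref{sect:reg-simplex-lattice}) genuinely obstructs this route, and the precise crux of \eqref{eq:makai-symmetric} is whether the displayed geometric-mean inequality survives under the hypothesis $K = -K$.

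For the general bound \eqref{eq:makai-general}, the tempting reduction — apply the symmetric estimate to $K_c$ (on which $\lambda_1(\dual{K_c},\dual{\Lambda})$ alone depends) and then compare $\vol(K_c)$ with $\vol(K)$ — cannot succeed: it would require $\vol(K_c) \le \tfrac{4^n}{n+1}\vol(K)$, whereas the extremizing simplex saturates the Rogers–Shephard inequality with $\vol(S_c) = \binom{2n}{n}\vol(S)$, and $\binom{2n}{n} > \tfrac{4^n}{n+1}$ already for $n \ge 2$. So the non-symmetric case must be treated directly, presumably by relating $\lambda_1(\dual{K_c},\dual{\Lambda})$ to the volume product of $K$ itself and invoking the (equally open) Mahler conjecture for general convex bodies, whose extremizer is again the simplex. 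I expect this, together with the symmetric geometric-mean obstruction above, to be the genuinely hard part. A realistic first target is to settle the planar case $n = 2$ and to verify rigorously that the two stated extremizers are the only equality cases.
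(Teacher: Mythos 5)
There is no proof in the paper to compare against: the statement you were asked to prove is Conjecture~\ref{conj:makai}, Makai Jr.'s conjecture from 1978, which the paper states explicitly as an open problem. The paper proves neither \eqref{eq:makai-symmetric} nor \eqref{eq:makai-general}; it only records conditional implications and partial evidence, so your (explicitly conditional) proposal has to be judged against that discussion rather than against a proof.

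Measured this way, your analysis is correct and largely reproduces the paper's own remarks. Your reduction of \eqref{eq:makai-symmetric} to Mahler's conjecture \eqref{eq:mahler} --- via $K_c=2K$, Minkowski's first theorem \eqref{eq:minkowskis-1st-rho-n} applied to $\dual{K}$ and $\dual{\Lambda}$, and $\det(\dual{\Lambda})=1/\det(\Lambda)$ --- is exactly the connection the paper states right after the conjecture; likewise, the paper notes that \eqref{eq:makai-general} would follow from the nonsymmetric Mahler variant $\vol(K)\cdot\vol(\dual{K_c})\geq (n+1)/n!$ of \cite{makaimartini2016density}, which is the ``direct treatment'' you anticipate at the end. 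Your second route, via the lower bound in Theorem~\ref{thm:minkowskisecondrho}, is also in the paper: Section~\ref{sect:volume-inequalities} observes that \eqref{eq:makai-symmetric} holds whenever $\rho_1(K,\Lambda)\geq\rho_i(K,\Lambda)$ for all $i$; your geometric-mean condition $\rho_1(K,\Lambda)\geq\bigl(\prod_{i=1}^n\rho_i(K,\Lambda)\bigr)^{1/n}$ is formally weaker, hence a mild sharpening of that remark. Your two obstruction computations go slightly beyond the paper and are both correct: the Makai simplex forces $\prod_i\rho_i(S,\Z^n)>\rho_1(S,\Z^n)^n$ (so the geometric-mean route genuinely needs the symmetry hypothesis), and Rogers--Shephard equality for the simplex blocks any reduction of \eqref{eq:makai-general} to the symmetric case through $K\mapsto K_c$. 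One small point: the first obstruction is unconditional, not ``conjectured,'' since $\vol(S)=(n+1)/n!$ and the lattice width $\lambda_1(\dual{S_c},\Z^n)=2$ can be verified directly, after which Theorem~\ref{thm:minkowskisecondrho} gives the strict inequality. Your computation $\rho_i=\tfrac12$ for all $i$ for the crosspolytope via Proposition~\ref{prop:succmin-bounds-intro} is also correct.

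The gap in your proposal is therefore the unavoidable one: neither route yields an unconditional proof, because Mahler's conjecture and the symmetric geometric-mean inequality for packing minima are themselves open --- and you say so honestly. Nothing in the paper would have closed this gap; the paper's contribution relative to this conjecture is precisely the evidence you rederived (Theorem~\ref{thm:minkowskisecondrho} plus the sufficient condition on the $\rho_i$), not a resolution of it.
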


We choose to formulate this conjecture with respect to the first successive minimum of the polar body because it illustrates that Makai Jr.'s conjecture is a polar analog of Minkowski's 1st Theorem~\eqref{eq:minkowskis-1st-rho-n}.
Informally it says that a small volume of~$K$ is a sufficient condition for the existence of non-trivial polar lattice points in the polar body~$\dual{K_c}$.

In combination with~\eqref{eq:minkowskis-1st-rho-n}, the bound~\eqref{eq:makai-symmetric} would follow from the notorious open conjecture of Mahler~\cite{mahler1939ein} about the minimal volume-product of a convex body.
It states that for every origin-symmetric $K \in \Kn$, one has
\begin{align}
\vol(K) \cdot \vol(\dual{K}) \geq \frac{4^n}{n!}.\label{eq:mahler}
\end{align}
Similarly, one would get~\eqref{eq:makai-general} from the non-symmetric variant
\[
\vol(K) \cdot \vol(\dual{K_c}) \geq \frac{n+1}{n!}, \quad\text{ for } K \in \Kn,
\]
of Mahler's conjecture formulated in~\cite[Conj.~3.17]{makaimartini2016density}.
The details of these connections have already been presented in~\cite{gonzalezmerinoschymura2017ondensities} and~\cite{henkxue2019onsuccessive}.

After this in-depth discussion of volume-inequalities for a single packing minimum, we now turn our attention to relating the volume functional to the whole sequence of packing minima.
We need to prepare our main observation with a simple but useful property, which is actually the packing minima analog of the corresponding inequality for covering minima (cf.~\cite[Lem.~4.2]{gonzalezmerinoschymura2017ondensities}).

\begin{lemma}
\label{lem:proj}
Let $K \in \Kn$, $\Lambda \in \Lat^n$, and let $1 \leq k \leq n-1$.
Then, for every lattice plane $L\in\grass(\Lambda,k)$ and every $1 \leq i \leq n-k$, we have
\[
\rho_i(K|L^\perp,\Lambda|L^\perp)\leq \rho_i(K,\Lambda).
\]
\end{lemma}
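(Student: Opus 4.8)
The plan is to use the max-formula~\eqref{eq:def-packing-minima} for the packing minima and to \emph{lift} an optimal lattice plane for the projected body to an admissible competitor for the original one. Write $\cdot|L^\perp$ for the orthogonal projection onto $L^\perp$; since $L$ is a lattice plane, $\Lambda|L^\perp$ is a lattice in the $(n-k)$-dimensional space $L^\perp$, and $K|L^\perp$ is a full-dimensional convex body therein. As $1 \leq i \leq n-k$, applying~\eqref{eq:def-packing-minima} inside $L^\perp$ produces a lattice plane $N \in \grass(\Lambda|L^\perp,(n-k)-i)$, with $N^\perp$ formed within $L^\perp$, such that
\[
\rho_i(K|L^\perp,\Lambda|L^\perp) = \lambda_1\big((K|L^\perp)_c|N^\perp,\,(\Lambda|L^\perp)|N^\perp\big).
\]
I would then set $M := N + L$, the preimage of $N$ under $\cdot|L^\perp$, a linear subspace of dimension $(n-k-i)+k = n-i$.

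The first key step is to check that $M$ is a lattice plane of $\Lambda$, i.e.\ that $\dim(M \cap \Lambda) = n-i$. This follows from the short exact sequence
\[
0 \to \Lambda \cap L \to \Lambda \cap M \to (\Lambda|L^\perp) \cap N \to 0
\]
induced by the projection: its kernel is $\Lambda \cap L$ of rank $k$ (as $L$ is a lattice plane) and its image is exactly $(\Lambda|L^\perp) \cap N$ of rank $n-k-i$ (as $N$ is a lattice plane of $\Lambda|L^\perp$). Summing ranks gives $\dim(M \cap \Lambda) = k + (n-k-i) = n-i = \dim M$, so $M \in \grass(\Lambda,n-i)$.

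The second key step is to identify the two projection data. Since $N \subseteq L^\perp$, the orthogonal complement of $M = N+L$ in $\R^n$ is $M^\perp = L^\perp \cap N^\perp = N^\perp$, the latter formed inside $L^\perp$; in particular $M^\perp \subseteq L^\perp$. Because orthogonal projections onto nested subspaces compose, projecting onto $M^\perp$ directly agrees with first projecting onto $L^\perp$ and then onto $M^\perp$ within $L^\perp$; using also that projection commutes with the difference-body operation, one gets $K_c|M^\perp = (K|L^\perp)_c|M^\perp$ and $\Lambda|M^\perp = (\Lambda|L^\perp)|M^\perp$. Hence the term for $\Lambda$ at the plane $M$ in~\eqref{eq:def-packing-minima} coincides with the term for $\Lambda|L^\perp$ at the plane $N$, so that
\[
\rho_i(K|L^\perp,\Lambda|L^\perp) = \lambda_1(K_c|M^\perp,\Lambda|M^\perp) \leq \rho_i(K,\Lambda),
\]
where the final inequality holds because $M$ is an admissible plane in the maximum~\eqref{eq:def-packing-minima} defining $\rho_i(K,\Lambda)$.

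I expect the main obstacle to be the bookkeeping around the nested orthogonal complements — in particular the identity $M^\perp = N^\perp$ inside $L^\perp$ — together with the rank count certifying that $M$ is a genuine lattice plane. Once these structural facts are in place, the asserted inequality is immediate from the max-characterization, and the boundary case $i = n-k$ (where $N = \{\vnull\}$ and $M = L$) is covered by the same argument without modification.
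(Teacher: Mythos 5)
Your proof is correct and follows essentially the same route as the paper: the paper's one-display argument likewise lifts each lattice plane $H \in \grass(\Lambda|L^\perp,n-k-i)$ to $L+H \in \grass(\Lambda,n-i)$, uses the identities $(K_c|L^\perp)|H^\perp = K_c|(L+H)^\perp$ and $(\Lambda|L^\perp)|H^\perp = \Lambda|(L+H)^\perp$, and compares the two maxima in~\eqref{eq:def-packing-minima}. You merely make explicit (via the rank count and the nested-projection identity) what the paper summarizes as ``basic properties of orthogonal projections and complements.''
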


\begin{proof}
By definition of the packing minima and basic properties of orthogonal projections and complements, we see that for any $1\leq i\leq n-k$ holds
\begin{equation*}
  \begin{split}
    &\rho_i(K|L^\perp,\Lambda|L^\perp)\\ &=\max\left\{
   \lambda_1\big( (K_c|L^\perp) |H^\perp, (\Lambda|L^\perp)|H^\perp \big) : H \in
   \grass(\Lambda|L^\perp,n-k-i)\right\}  \\
 &=\max\left\{
   \lambda_1\big( (K_c|(L +H)^\perp, (\Lambda|(L+H)^\perp \big) : H \in
   \grass(\Lambda|L^\perp,n-k-i)\right\} \\
 & \leq \max\left\{
   \lambda_1\big( K_c|G^\perp, \Lambda|G^\perp \big) : G \in
   \grass(\Lambda|L^\perp,n-i)\right\} = \rho_i(K,\Lambda).\qedhere
\end{split}
\label{eq:5}
\end{equation*}  
\end{proof} 

\noindent We can now prove the variant of Minkowski's 2nd Theorem for packing minima, stated in Theorem~\ref{thm:minkowskisecondrho}.
In view of Proposition~\ref{prop:succmin-bounds-intro} and Proposition~\ref{prop:covmin-bounds}, the lower bound is stronger than the corresponding lower bound in~\eqref{eq:minkowskisecond} for successive minima, and the one in~\cite[Thm.~4.3]{gonzalezmerinoschymura2017ondensities} for covering minima.


\begin{proof}[Proof of Theorem~\ref{thm:minkowskisecondrho}]
As discussed, only the lower bound needs proof.
We use induction on $n$, and basically follow the ideas in~\cite[Thm.~4.3]{gonzalezmerinoschymura2017ondensities} for the corresponding inequality for covering minima.
We may assume $n \geq 2$.
Let $\vv\in\Lambda\setminus\{\vnull\}\cap \lambda_1(K_c,\Lambda)K_c$, which means that we find $\vv_1,\vv_2 \in K$ such that $\frac{1}{\lambda_1(K_c,\Lambda)}\vv = \vv_1-\vv_2$.
Writing $L_\vv = \lin\{\vv\}$, it follows, for instance by Steiner-Symmetrization (cf.~\cite[Sect.~9.1]{gruber2007convex}), that $\vol(K) \geq \frac{1}{n}\|\vv_1-\vv_2\|\cdot\vol(K|L_\vv^\perp)$.
Therefore,
  \begin{equation*}
    \begin{split} 
    \vol(K) & \geq \frac{1}{n}
    \|\vv_1-\vv_2\| \cdot \vol(K|L_\vv^\perp) = \frac{1}{n}\frac{1}{\lambda_1(K_c,\Lambda)}\|\vv\| \cdot \vol(K|L_\vv^\perp)\\
    & =\frac{1}{n}\frac{1}{\rho_n(K,\Lambda)}\det(\Lambda \cap L_\vv) \cdot  \vol(K|L_\vv^\perp).
\end{split} 
  \end{equation*}
Using that $\det(\Lambda \cap L_\vv)\det(\Lambda|L_\vv^\perp) = \det(\Lambda)$ (cf.~\cite[Prop.~1.9.7]{martinet2003perfect}), we have
   \begin{equation*}
    \begin{split} 
    \frac{\vol(K)}{\det(\Lambda)} & \geq \frac{1}{n}\frac{1}{\rho_n(K,\Lambda)} \frac{\vol(K|L_\vv^\perp)}{\det(\Lambda|L_\vv^\perp)},
\end{split} 
\end{equation*}
and thus arrive at a state where we may apply the induction hypothesis to $\vol(K|L_\vv^\perp)/\det(\Lambda|L_\vv^\perp)$ and obtain
   \begin{equation*}
    \begin{split} 
    \frac{\vol(K)}{\det(\Lambda)} & \geq \frac{1}{n!} \frac{1}{\rho_n(K,\Lambda)} \prod_{i=1}^{n-1} \frac{1}{\rho_i(K|L_\vv^\perp,\Lambda|L_\vv^\perp)}.
\end{split} 
\end{equation*}
The assertion now follows in view of Lemma~\ref{lem:proj}.
\end{proof}

As a consequence we get new evidence for Conjecture~\ref{conj:makai}: If $K \in \Kn$ and $\Lambda \in \Lat^n$ are such that $\rho_1(K,\Lambda) \geq \rho_i(K,\Lambda)$, for every $1 \leq i \leq n$, then the inequality~\eqref{eq:makai-symmetric} holds.
Additionally, the lower bound in Theorem~\ref{thm:minkowskisecondrho} is a
next step towards the strongest such estimate in terms of the product
of the functionals $\rho_i(K,\Lambda)^{-1}$,
$\lambda_i(K_c,\Lambda)^{-1}$ or
$\lambda_i(\dual{K_c},\dual{\Lambda})$, which is the following
strengthening of Makai Jr.'s Conjecture:
\begin{align}
\frac{1}{n!} \prod_{i=1}^n \lambda_i(\dual{K_c},\dual{\Lambda})  &\leq    \frac{\vol(K)}{\det(\Lambda)}, \quad\text{ if } K=-K,\quad\text{ and}\label{eq:lbM2-strongest-symmetric}\\
\frac{n+1}{2^nn!} \prod_{i=1}^n \lambda_i(\dual{K_c},\dual{\Lambda})  &\leq    \frac{\vol(K)}{\det(\Lambda)}, \quad\text{ for general } K \in \Kn.\label{eq:lbM2-strongest-general}
\end{align}
Just as in Conjecture~\ref{conj:makai}, but in contrast to the lower bound in~\eqref{eq:minkowskisecond}, the potential origin-symmetry of the involved convex body matters in these estimates.
This is due to the fact that polarization $K \mapsto \dual{K}$ and symmetrization $K \mapsto K_c$ as operations on convex bodies do not commute, so that $\lambda_i(\dual{K_c},\dual{\Lambda})$ and $\lambda_i((\dual{K})_c,\dual{\Lambda})$ are not comparable in general.

The bound~\eqref{eq:lbM2-strongest-symmetric} was conjectured in~\cite{henkxue2019onsuccessive}, and shown to hold for planar convex bodies in a stronger fashion.
Mahler~\cite{mahler1974polar} already proved a weaker inequality of this type.
Again, the validity of Mahler's conjecture~\eqref{eq:mahler} combined with Minkowski's 2nd Theorem~\eqref{eq:minkowskisecond} would imply~\eqref{eq:lbM2-strongest-symmetric}.
A weaker problem than~\eqref{eq:lbM2-strongest-symmetric} is to prove
\begin{align*}
\frac{1}{n!} \prod_{i=1}^n \rho_i(\dual{K_c},\dual{\Lambda})  &\leq    \frac{\vol(K)}{\det(\Lambda)}, \quad\text{ for origin-symmetric } K \in \Kn.
\end{align*}
In case that the transference bound in Conjecture~\ref{conj:rho-transference} holds, this inequality serves as an intermediate bound between the one in Theorem~\ref{thm:minkowskisecondrho} and~\eqref{eq:lbM2-strongest-symmetric}.
In view of the relations in Proposition~\ref{prop:covmin-bounds},
another relevant conjecture in this context is the \emph{covering
  product conjecture} posed
in~\cite{gonzalezmerinoschymura2017ondensities}.

\begin{conj}[{\cite[Conj.~4.8]{gonzalezmerinoschymura2017ondensities}}]
\label{conj:covering-product}
Let $K \in \Kn$ and $\Lambda \in \Lat^n$.
Then,
\[
\frac{n+1}{2^n} \prod_{i=1}^n \frac{1}{\mu_i(K,\Lambda)} \leq    \frac{\vol(K)}{\det(\Lambda)},
\]
and this is best possible.
\end{conj}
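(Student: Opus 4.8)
The first thing to observe is that the conjecture does \emph{not} follow formally from what precedes it: since $\rho_i(K,\Lambda)\le\mu_i(K,\Lambda)$ by Proposition~\ref{prop:covmin-bounds}, the lower bound of Theorem~\ref{thm:minkowskisecondrho} only yields the weaker estimate $\frac{1}{n!}\prod_{i=1}^n\frac{1}{\mu_i(K,\Lambda)}\le\vol(K)/\det(\Lambda)$, which merely recovers \cite[Thm.~4.3]{gonzalezmerinoschymura2017ondensities}. Hence the entire content of the statement is the improvement of the constant from $1/n!$ to $(n+1)/2^n$, and a fresh argument is needed. The plan is to run the same induction on~$n$ as in the proof of Theorem~\ref{thm:minkowskisecondrho}. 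The base case $n=1$ is an equality: the covering radius of an interval $K$ with respect to a one-dimensional lattice~$\Lambda$ satisfies $1/\mu_1(K,\Lambda)=\vol(K)/\det(\Lambda)$, and the constant $(n+1)/2^n$ equals~$1$ there. For the inductive step I would select a lattice line $L=\lin\{\vv\}\in\grass(\Lambda,1)$ adapted to the covering radius $\mu_n(K,\Lambda)$, project onto $L^\perp$, use the factorization $\det(\Lambda)=\det(\Lambda\cap L)\det(\Lambda|L^\perp)$, and invoke the covering analogue of Lemma~\ref{lem:proj}, namely $\mu_i(K|L^\perp,\Lambda|L^\perp)\le\mu_i(K,\Lambda)$ for $1\le i\le n-1$ (this is \cite[Lem.~4.2]{gonzalezmerinoschymura2017ondensities}), in order to apply the induction hypothesis to $(K|L^\perp,\Lambda|L^\perp)$.

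The crux is a sharp one-step volume inequality
\[
\frac{\vol(K)}{\det(\Lambda)}\ \ge\ \frac{n+1}{2n}\cdot\frac{1}{\mu_n(K,\Lambda)}\cdot\frac{\vol(K|L^\perp)}{\det(\Lambda|L^\perp)}
\]
for a suitable direction~$\vv$. Once this is available, the per-step constants telescope, $\prod_{k=1}^n\frac{k+1}{2k}=\frac{n+1}{2^n}$, reproducing the target constant, and the projection monotonicity above closes the recursion. The difficulty is that the factor $\frac{n+1}{2n}$ is far larger than the factor $\frac{1}{n}$ that the crude Steiner-symmetrization bound of Theorem~\ref{thm:minkowskisecondrho} produces (the required improvement per dimension is $\frac{n+1}{2}$), so one cannot simply transplant that argument.

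I expect the main obstacle to be exactly this sharp one-step estimate. Unlike the packing setting, the argument must genuinely exploit that $\mu_n(K,\Lambda)$ is a \emph{covering} radius, so that $K$ fills $\R^n$ efficiently along~$\vv$, and it must permit the extremal body to be the non-symmetric simplex; this is precisely the feature that separates the constant $(n+1)/2^n$ from $1/n!$ and ties the statement to the general Makai Jr.~bound~\eqref{eq:makai-general}, whose extremizer is the simplex $\conv(\{\ve_1,\ldots,\ve_n,-(\ve_1+\ldots+\ve_n)\})$. Concretely, I would try to choose~$\vv$ as a shortest vector realizing the covering radius in an appropriate quotient, and bound $\vol(K)$ from below by the volume of a cone over $K|L^\perp$ whose height is controlled by $1/\mu_n(K,\Lambda)$, with equality forced at a simplex cross-section. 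Making this rigorous in every dimension is what keeps the covering product conjecture open; the planar case $n=2$ is the natural first target, where the single inductive step should be tractable by a careful direct analysis.
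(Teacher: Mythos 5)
This statement is an open problem: the paper records it verbatim as Conjecture~4.8 of \cite{gonzalezmerinoschymura2017ondensities} and offers no proof, so there is no argument of the authors to compare yours against; your proposal can only be judged on its own terms. Your framing is accurate as far as it goes: by Proposition~\ref{prop:covmin-bounds} one has $\rho_i(K,\Lambda)\leq\mu_i(K,\Lambda)$, so Theorem~\ref{thm:minkowskisecondrho} only yields the constant $1/n!$; your base case $n=1$ is an identity; the telescoping $\prod_{k=1}^n\frac{k+1}{2k}=\frac{n+1}{2^n}$ is correct; and the projection monotonicity $\mu_i(K|L^\perp,\Lambda|L^\perp)\leq\mu_i(K,\Lambda)$ is indeed \cite[Lem.~4.2]{gonzalezmerinoschymura2017ondensities}. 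But, as you concede, everything hinges on the unproven one-step inequality
\[
\frac{\vol(K)}{\det(\Lambda)}\ \geq\ \frac{n+1}{2n}\cdot\frac{1}{\mu_n(K,\Lambda)}\cdot\frac{\vol(K|L^\perp)}{\det(\Lambda|L^\perp)},
\]
and since this inequality together with the projection lemma \emph{implies} the conjecture, your reduction does not lower the difficulty; it reformulates the open problem.

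Moreover, the concrete mechanism you propose for that step provably fails. A cone over $K|L_\vv^\perp$ whose height is a chord of $K$ in direction $\vv$ gives $\vol(K)\geq\frac{1}{n}\,c_K(\vv)\,\vol(K|L_\vv^\perp)$, where $c_K(\vv)$ denotes the length of a longest chord of $K$ parallel to $\vv$; matching the required constant forces $c_K(\vv)\geq\frac{n+1}{2\,\mu_n(K,\Lambda)}\|\vv\|$ for some primitive $\vv\in\Lambda$. Since $c_K(\vv)=\|\vv\|/\|\vv\|_{K_c}$, this is equivalent to $\lambda_1(K_c,\Lambda)\leq\frac{2}{n+1}\,\mu_n(K,\Lambda)$, a transference inequality that is false: for $K=[0,1]^n$ and $\Lambda=\Z^n$ one has $\mu_n(K,\Z^n)=\lambda_1(K_c,\Z^n)=1$, and indeed $c_K(\vv)/\|\vv\|=1/\max_i|v_i|\leq 1<\frac{n+1}{2}$ for \emph{every} nonzero integer vector $\vv$, so no choice of direction works. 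Note that the one-step inequality itself still holds for the cube ($\vol(K)=1\geq\frac{n+1}{2n}$); what breaks down is the cone bound, which discards all volume of $K$ away from a single chord. (It is only at the simplex of~\eqref{eq:makai-general}, where $c_K(\ve_n)=\frac{n+1}{n}$ and $\mu_n=\frac{n}{2}$, that the cone estimate is tight, which is why the example misleadingly suggests the mechanism could work.) So any viable attack on your one-step estimate must exploit the covering property of $K$ globally, across many directions simultaneously, rather than along one lattice line --- and that is precisely the difficulty that keeps the covering product conjecture open.
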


There is again no distinction between the case of origin-symmetric convex bodies and general ones.
In fact, there are supposed to be origin-symmetric convex bodies as well as certain simplices that attain equality in this estimate (see the discussion in~\cite{gonzalezmerinoschymura2017ondensities}).

\section{A discrete Minkowski-type inequality for packing minima}
\label{sect:main-result}

\noindent This section is devoted to proving our main result,
Theorem~\ref{thm:main}, which establishes an optimal upper bound on $\#(K\cap\Lambda)$ in
terms of the packing minima. First, however, we point
out that the weaker result, when the packing minima are replaced 
 by successive minima of the polar lattice, can be proved easily as
 claimed in the introduction. 
\begin{proposition} Let $K\in\Kn$ and $\Lambda\in\Lat^n$. Then
  \begin{equation*}
      \#(K\cap\Lambda) \leq \prod_{i=1}^n
      \left\lfloor \lambda_i(\dual{K_c},\dual{\Lambda})+1
      \right\rfloor
    \end{equation*}
    and the inequality is best possible. 
\label{prop:second-minkowski-polarlattice}
\end{proposition}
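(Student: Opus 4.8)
The plan is to realise the successive minima of $\dual{K_c}$ by a linear change of coordinates that simultaneously sends $\Lambda$ into the integer lattice and traps $K$ inside an axis-parallel box whose side lengths are governed by the $\lambda_i(\dual{K_c},\dual{\Lambda})$. Concretely, I would first pick linearly independent vectors $\va_1,\dots,\va_n \in \dual{\Lambda}$ with $\va_i \in \lambda_i(\dual{K_c},\dual{\Lambda})\,\dual{K_c}$, which exist by the very definition of the successive minima. These furnish the linear map $\Phi \colon \R^n \to \R^n$, $\Phi(\vx) = \big(\ip{\va_1}{\vx},\dots,\ip{\va_n}{\vx}\big)$.

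Next I would record three properties of $\Phi$. Since the $\va_i$ are linearly independent, $\Phi$ is a linear bijection, and in particular injective on $K \cap \Lambda$. Since each $\va_i \in \dual{\Lambda}$, every coordinate $\ip{\va_i}{\vx}$ is an integer for $\vx \in \Lambda$, so $\Phi(\Lambda) \subseteq \Z^n$. Finally, the containment $\va_i \in \lambda_i(\dual{K_c},\dual{\Lambda})\,\dual{K_c}$ unravels to $\ip{\va_i}{\vx-\vy} \le \lambda_i(\dual{K_c},\dual{\Lambda})$ for all $\vx,\vy \in K$ (because $K_c = K-K$), that is, $\max_{\vx \in K}\ip{\va_i}{\vx} - \min_{\vy \in K}\ip{\va_i}{\vy} \le \lambda_i(\dual{K_c},\dual{\Lambda})$. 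Hence $\Phi(K)$ is contained in a box $\prod_{i=1}^n [m_i,M_i]$ with $M_i - m_i \le \lambda_i(\dual{K_c},\dual{\Lambda})$.

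Combining these, I would estimate
\[
\#(K \cap \Lambda) = \#\big(\Phi(K) \cap \Phi(\Lambda)\big) \le \#\big(\Phi(K) \cap \Z^n\big) \le \prod_{i=1}^n \#\big([m_i,M_i] \cap \Z\big).
\]
A real interval of length $\ell$ meets $\Z$ in at most $\lfloor \ell\rfloor + 1$ points (the integer count is bounded by $\ell+1$ and is itself an integer), and $\lfloor \ell\rfloor + 1 = \lfloor \ell + 1\rfloor \le \lfloor \lambda_i(\dual{K_c},\dual{\Lambda}) + 1\rfloor$ by monotonicity of the floor; multiplying over $i$ yields the claimed bound. For optimality, I would invoke the boxes from the introduction: for $K=C(\vr)$ with positive integer entries $\vr$ one has $K_c = 2C(\vr)$, whence $\lambda_i(\dual{K_c},\Z^n) = 2r_i$ by~\eqref{eq:ex-boxes}, while $\#(C(\vr)\cap\Z^n) = \prod_{i=1}^n(2r_i+1) = \prod_{i=1}^n\lfloor 2r_i+1\rfloor$, so equality holds.

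The argument is essentially elementary, so I do not expect a genuine obstacle; the only point requiring care is the identification of the $i$th side length of the bounding box with $\max_{\vx \in K}\ip{\va_i}{\vx} - \min_{\vy \in K}\ip{\va_i}{\vy}$ and the recognition that $\va_i \in \lambda_i(\dual{K_c},\dual{\Lambda})\,\dual{K_c}$ bounds exactly this quantity by $\lambda_i(\dual{K_c},\dual{\Lambda})$, together with the elementary floor identity $\lfloor \ell\rfloor+1 = \lfloor \ell+1\rfloor$. Once these are in place, the injectivity of $\Phi$ and the inclusion $\Phi(\Lambda)\subseteq\Z^n$ reduce the whole problem to counting integer points in an axis-parallel box, which is immediate. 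This transparency is precisely what contrasts the polar estimate with the much more delicate primal Conjecture~\eqref{eq:conj-discrete-2nd-Mink}.
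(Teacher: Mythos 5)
Your proof is correct and is essentially the paper's own argument: both choose linearly independent $\vv_i \in \dual{\Lambda}$ with $\vv_i \in \lambda_i(\dual{K_c},\dual{\Lambda})\,\dual{K_c}$, use the injective map $\va \mapsto (\ip{\vv_1}{\va},\dots,\ip{\vv_n}{\va})$ from $\Lambda$ into $\Z^n$, and trap $K$ in a parallelotope whose widths are controlled by $\suk(K-K,\vv_i) \leq \lambda_i(\dual{K_c},\dual{\Lambda})$ (your box $\prod_{i}[m_i,M_i]$ is precisely the image under your $\Phi$ of the paper's parallelotope $P$). Your explicit verification of sharpness via the integer boxes $C(\vr)$ is a nice addition, since the paper's proof itself does not spell that part out.
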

\begin{proof} For short we write $\lambda^\star_i$ instead of
  $\lambda_i(\dual{K_c},\dual{\Lambda})$, and let
  $\vv_1,\dots,\vv_n\in \dual{\Lambda}$ be linearly independent with
  $\vv_i\in \lambda^\star_i\cdot \dual{K_c}$, $1\leq i\leq n$. Then, by
  the definition of the polar body and the symmetry of $K_c$, we conclude 
  \begin{equation*}
    K-K\subseteq \{\vx\in\R^n : |\ip{\vv_i}{\vx}|\leq
    \lambda^\star_i, 1\leq i\leq n\}.
  \end{equation*}
Hence, if $\suk(M,\vu)=\max\{\ip{\vu}{\vx}: \vx\in M\}$, $\vu\in\R^n$,
denotes the support function of the convex body $M \in \Kn$, then
\begin{equation}
  \suk(K,\vv_i)+\suk(K,-\vv_i)=\suk(K-K,\vv_i)\leq\lambda^\star_i,\text{ for }  1\leq i\leq n.
  \label{eq:support-function}
\end{equation}
On the other hand, if we consider the parallelotope 
\begin{equation*} 
P= \{\vx\in\R^n :
-\suk(K,-\vv_i)\leq  \ip{\vv_i}{\vx}\leq \suk(K,\vv_i),\, 1\leq i\leq n\},
\end{equation*} 
we certainly have $K\subseteq P$  and on account of \eqref{eq:support-function} it suffices to observe that $\#(P\cap\Lambda)\leq
\prod_{i=1}^n  \lfloor \suk(K,\vv_i)+\suk(K,-\vv_i)+1\rfloor$. This,
however, follows from the fact that the map $\Lambda\to\Z^n$   given
by 
$\va\mapsto(\ip{\vv_1}{\va},\dots, \ip{\vv_n}{\va})^\trans$ is injective,
since  
$\vv_1,\dots,\vv_n\in\dual{\Lambda}$ are linearly independent.
\end{proof}   

For the proof of Theorem  \ref{thm:main} we need a few lemmas which we
state next. The first one extends the bound
~\eqref{eq:disc-lambda1-bound} to slices.

\begin{lemma}
\label{lem:slice_points} 
Let $K\in\Kn$, $\Lambda \in \Lat^n$, $L\in\grass(\Lambda,k)$,
$k\in\{1,\dots,n-1\}$, and $\vt\in\R^n$. Then
  \begin{equation*}
          \#(K\cap(\vt+L) \cap \Lambda) \leq \left\lfloor
            \frac{1}{\lambda_1(K_c\cap
              L,\Lambda\cap L)}+1\right\rfloor^k\leq \left\lfloor
            \frac{1}{\lambda_1(K_c,\Lambda)}+1\right\rfloor^k.
        \end{equation*}
\end{lemma}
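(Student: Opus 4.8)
The plan is to reduce the count to a genuinely $k$-dimensional lattice point problem inside the plane $\vt+L$ and then invoke the already-known one-minimum bound~\eqref{eq:disc-lambda1-bound}. If $(\vt+L)\cap\Lambda=\emptyset$, the left-hand side is $0$ and there is nothing to prove, since every floor factor on the right is at least $1$. Otherwise I would fix a lattice point $\va\in\Lambda\cap(\vt+L)$, so that $\vt+L=\va+L$ and $\Lambda\cap(\vt+L)=\va+(\Lambda\cap L)$, where $\Lambda\cap L$ is a $k$-dimensional lattice in the $k$-dimensional space $L$. Translating by $-\va$ then identifies $\#(K\cap(\vt+L)\cap\Lambda)$ with $\#\big(K'\cap(\Lambda\cap L)\big)$, where $K'=(K-\va)\cap L$ is a compact convex set contained in $L$.

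Next I would apply the bound~\eqref{eq:disc-lambda1-bound}, now in the $k$-dimensional ambient space $L$, to the body $K'$ and the lattice $\Lambda\cap L$, which yields
\[
\#\big(K'\cap(\Lambda\cap L)\big)\le\left\lfloor\frac{1}{\lambda_1((K')_c,\Lambda\cap L)}+1\right\rfloor^{k}.
\]
The two asserted inequalities then follow from two monotonicity comparisons of first minima. For the first, observe that $(K')_c=K'-K'\subseteq(K-K)\cap(L-L)=K_c\cap L$, using that $L$ is a linear subspace; since a smaller origin-symmetric body has a larger first minimum, this gives $\lambda_1((K')_c,\Lambda\cap L)\ge\lambda_1(K_c\cap L,\Lambda\cap L)$, and passing through the monotone floor and $k$-th power yields the first bound. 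For the second, every nonzero vector of $\Lambda\cap L$ lying in a dilate of $K_c\cap L$ is also a nonzero vector of $\Lambda$ lying in the same dilate of $K_c$, because $K_c\cap L\subseteq K_c$ and $\Lambda\cap L\subseteq\Lambda$; hence $\lambda_1(K_c\cap L,\Lambda\cap L)\ge\lambda_1(K_c,\Lambda)$, and monotonicity again closes the chain.

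The only point requiring genuine care is that $K'=(K-\va)\cap L$ need not be full-dimensional in $L$, whereas~\eqref{eq:disc-lambda1-bound} is stated for convex bodies. I would first note that $K_c\cap L$ is always full-dimensional in $L$, since it is origin-symmetric with $\vnull$ in its relative interior, so the right-hand sides of the claimed inequalities are well defined. I expect the degeneracy of $K'$ itself to be the main technical nuisance, and I would dispose of it by restricting to the affine hull of $K'$: if $\dim(K')=d<k$, then the lattice points of $K'$ lie in a $d$-dimensional affine subspace, and applying~\eqref{eq:disc-lambda1-bound} there gives a bound with exponent $d\le k$, which only strengthens the estimate because each floor factor is at least $1$. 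With this caveat handled, the two displayed inequalities hold as stated.
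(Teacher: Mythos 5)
Your strategy is genuinely different from the paper's, and its core is correct, but the patch you propose for the degenerate case has a real hole. For comparison: the paper does not reduce to~\eqref{eq:disc-lambda1-bound} at all; it reruns the pigeonhole argument behind that bound directly in the slice. With $\lambda_1=\lambda_1(K_c\cap L,\Lambda\cap L)$ and $m=\lfloor 1/\lambda_1+1\rfloor$, if the slice contained more than $m^k$ lattice points, then---since all such points lie in a single coset of the rank-$k$ lattice $\Lambda\cap L$---two of them, say $\vv,\vw$, would be congruent modulo $m(\Lambda\cap L)$, so that $\frac{1}{m}(\vv-\vw)$ is a non-zero point of $\Lambda\cap L$ lying in $\frac{1}{m}(K_c\cap L)$, contradicting $1/m<\lambda_1$. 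The crucial feature is that this argument only ever uses the difference body $K_c\cap L$ of the \emph{full} body $K$, never the difference body of the slice, so no degeneracy question can arise. Your route instead black-boxes~\eqref{eq:disc-lambda1-bound}, which forces you to manufacture a full-dimensional convex body inside $L$; that is exactly the source of your difficulty. The non-degenerate part of your argument is fine: the identification $\Lambda\cap(\vt+L)=\va+(\Lambda\cap L)$, the inclusion $(K')_c\subseteq K_c\cap L$, and the two monotonicity comparisons of first minima are all correct.

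The gap is in the sentence handling $\dim(K')=d<k$. After translating the affine hull of $K'$ through a lattice point $\vb\in K'\cap\Lambda$, the induced lattice there is $\Lambda\cap(\mathrm{aff}(K')-\vb)$, which is a discrete subgroup of a $d$-dimensional space but need \emph{not} have rank $d$; so~\eqref{eq:disc-lambda1-bound} cannot be applied ``there'' as stated, since it requires a full-rank lattice. If you repair this by passing to the linear span of that subgroup, the intersected body may again fail to be full-dimensional in that span, so one step does not suffice; you would need an induction on the ambient dimension. A cleaner one-step fix: let $P=K\cap(\vt+L)\cap\Lambda$, pick $\va\in P$, and let $N=\lin(P-\va)\subseteq L$. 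Then $\Lambda\cap N$ is a full-rank lattice in $N$ (it contains the spanning set $P-\va$), and $C=(K-\va)\cap N$ \emph{is} full-dimensional in $N$, because $P-\va$ contains $\vnull$ and linearly spans $N$, hence affinely spans it, so $\conv(P-\va)\subseteq C$. Now apply~\eqref{eq:disc-lambda1-bound} in $N$ to $C$ and $\Lambda\cap N$, and conclude with your monotonicity comparisons, using $C_c\subseteq K_c\cap N\subseteq K_c\cap L$ and $\Lambda\cap N\subseteq\Lambda\cap L$, together with $\dim N\leq k$ and the fact that each floor factor is at least $1$. With this replacement your proof is complete.
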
 

\begin{proof}
For short, let $\lambda_1=\lambda_1(K_c\cap
              L,\Lambda\cap L)$ and  $m=\lfloor
              1/\lambda_1+1\rfloor$. Assume that
              $\#(K\cap(\vt+L)\cap \Lambda)> m^k$. Then there exist two
              different lattice points $\vv,\vw\in K\cap(\vt+L)\cap \Lambda$ belonging
              to the same residue class modulo $m$, which means that
              \begin{equation*}          
                \frac{1}{m}(\vv-\vw)
                \in \Lambda\setminus\{\vnull\}\cap\left[\frac{1}{m} (K_c\cap L)\right].
              \end{equation*}
              Since $1/m<\lambda_1$, this contradicts the definition
              of $\lambda_1$, and we have shown the first inequality. Obviously, $\lambda_1=\lambda_1(K_c\cap
              L,\Lambda\cap L)\geq \lambda_1(K_c,\Lambda)$ which gives
              the second inequality.
\end{proof} 

The next lemma establishes relationships between optimal lattice
planes for which  $\rho_k(K,\Lambda)$ is achieved  and lattice points
corresponding to the first successive minimum, i.e., $\lambda_1(K_c,\Lambda)$.

\begin{lemma}
\label{lem:firstvector}   
Let $K\in\Kn$, $\Lambda \in \Lat^n$ and $k\in\{1,\dots,n-1\}$.
Furthermore, let $L \in \grass(\Lambda,n-k)$ be such that
$\rho_k(K,\Lambda)=\lambda_1(K_c|L^\perp,\Lambda|L^\perp)$, and let $\vv \in (\lambda_1(K_c,\Lambda)\,K_c) \cap \Lambda\setminus\{\vnull\}$.
  \begin{enumerate}
  \item If $\rho_k(K,\Lambda)>\lambda_1(K_c,\Lambda)$, then $\vv\in
    L$.
  \item If $\vv\in L$, then $\rho_k(K,\Lambda)=\rho_k(K|\vv^\perp,\Lambda|\vv^\perp)$.
  \end{enumerate}
\end{lemma}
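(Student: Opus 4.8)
The plan is to handle the two parts separately: part (1) reduces to a one-line projection argument, while part (2) combines Lemma~\ref{lem:proj} with an explicit identification of two projected configurations.

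For part (1) I would argue by contradiction. Consider the projected vector $\vv|L^\perp\in\Lambda|L^\perp$. Since $\vv\in\lambda_1(K_c,\Lambda)\,K_c$ and orthogonal projection is linear, we get $\vv|L^\perp\in\lambda_1(K_c,\Lambda)\,(K_c|L^\perp)$, so $\|\vv|L^\perp\|_{K_c|L^\perp}\leq\lambda_1(K_c,\Lambda)$. If $\vv\notin L$, then $\vv|L^\perp\neq\vnull$, so it is a nonzero vector of the projected lattice $\Lambda|L^\perp$ of length at most $\lambda_1(K_c,\Lambda)$. This would force $\rho_k(K,\Lambda)=\lambda_1(K_c|L^\perp,\Lambda|L^\perp)\leq\|\vv|L^\perp\|_{K_c|L^\perp}\leq\lambda_1(K_c,\Lambda)$, contradicting the hypothesis $\rho_k(K,\Lambda)>\lambda_1(K_c,\Lambda)$. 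Hence $\vv\in L$.

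For part (2), note first that $\vv\in\Lambda\setminus\{\vnull\}$ gives $\lin\{\vv\}\in\grass(\Lambda,1)$, so $\Lambda|\vv^\perp$ is a lattice. The inequality $\rho_k(K|\vv^\perp,\Lambda|\vv^\perp)\leq\rho_k(K,\Lambda)$ is then exactly Lemma~\ref{lem:proj} applied with the one-dimensional lattice plane $\lin\{\vv\}$ and index $i=k$. For the reverse inequality I would exhibit a single lattice plane of $\Lambda|\vv^\perp$ that already realizes the value $\rho_k(K,\Lambda)$, namely $L':=L|\vv^\perp\subseteq\vv^\perp$. Since $\vv\in L$, projecting along $\vv$ drops exactly one dimension, so $\dim L'=(n-k)-1=(n-1)-k$; and because $(\Lambda\cap L)|\vv^\perp$ is a rank-$((n-1)-k)$ sublattice of $(\Lambda|\vv^\perp)\cap L'$ spanning $L'$, the plane $L'$ is a lattice plane of $\Gamma:=\Lambda|\vv^\perp$ of the correct dimension. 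The crucial observation is that $\vv\in L$ forces $L^\perp\subseteq\vv^\perp$, and a short computation shows that the orthogonal complement of $L'$ taken inside $\vv^\perp$ equals precisely $L^\perp$. As nested orthogonal projections compose, $(K_c|\vv^\perp)|L^\perp=K_c|L^\perp$ and $(\Lambda|\vv^\perp)|L^\perp=\Lambda|L^\perp$; since $(K|\vv^\perp)_c=K_c|\vv^\perp$, this yields $\lambda_1\big((K|\vv^\perp)_c|L'^\perp,\Gamma|L'^\perp\big)=\lambda_1(K_c|L^\perp,\Lambda|L^\perp)=\rho_k(K,\Lambda)$. Taking the maximum over lattice planes in the definition of $\rho_k(K|\vv^\perp,\Lambda|\vv^\perp)$ gives the reverse inequality, and equality follows.

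The only real obstacle is the bookkeeping in part (2): verifying that $L'$ is genuinely a lattice plane of $\Lambda|\vv^\perp$ of dimension $(n-1)-k$, and, above all, that its orthogonal complement computed within the hyperplane $\vv^\perp$ coincides with the complement $L^\perp$ computed in $\R^n$. This identification hinges entirely on the inclusion $L^\perp\subseteq\vv^\perp$ supplied by $\vv\in L$, and it is what makes the two projected configurations literally equal and thereby transfers the value of $\rho_k$ without loss.
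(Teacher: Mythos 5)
Your proof is correct and takes essentially the same route as the paper: part (1) by projecting $\vv$ onto $L^\perp$ and reading off a contradiction with $\rho_k(K,\Lambda)>\lambda_1(K_c,\Lambda)$, and part (2) by combining Lemma~\ref{lem:proj} with the decomposition $L=\lin\{\vv\}\oplus(L\cap\vv^\perp)$ — your $L'=L|\vv^\perp$ coincides with the paper's $L\cap\vv^\perp$ precisely because $\vv\in L$ — so that projecting along $L'$ inside $\vv^\perp$ reproduces exactly the projection along $L$ in $\R^n$. The paper compresses this into the single identity $\lambda_1(K_c|L^\perp,\Lambda|L^\perp)=\lambda_1\big((K_c|\vv^\perp)|(L')^\perp,(\Lambda|\vv^\perp)|(L')^\perp\big)$; your verification that $L'$ is a lattice plane of $\Lambda|\vv^\perp$ and that its orthogonal complement within $\vv^\perp$ equals $L^\perp$ is exactly the bookkeeping that identity leaves implicit.
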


\begin{proof}
For i) we suppose $\vv\notin L$.
Then,
\begin{equation*} 
\vv|L^\perp \in \lambda_1(K_c,\Lambda)\left(K_c|L^\perp\right)
\cap\left(\Lambda|L^\perp \setminus\{\vnull\}\right)
\end{equation*}
and thus  
\begin{equation*} 
\rho_k(K,\Lambda)=\lambda_1(K_c|L^\perp,\Lambda|L^\perp)\leq
\lambda_1(K_c,\Lambda).
\end{equation*}
For ii) we write $L = \lin\{\vv\} \oplus L^\prime$  with
$L^\prime=L\cap \vv^\perp\in \grass(\Lambda|\vv^\perp,n-k-1)$. Then 
\begin{equation*}
  \begin{split}
    \rho_k(K,\Lambda) & =
    \lambda_1(K_c|L^\perp,\Lambda|L^\perp)\\ & =\lambda_1((K_c|\vv^\perp)|(L^\prime)^\perp,(\Lambda|\vv^\perp)|(L^\prime)^\perp)
    \leq \rho_k(K|\vv^\perp,\Lambda|\vv^\perp). 
\end{split} 
\end{equation*}
Together with Lemma \ref{lem:proj} we get ii). 
\end{proof} 

We have already seen in the introduction that the packing minima do not necessarily form a decreasing sequence.
However, the next observation will help us to identify useful properties of a maximal strictly decreasing subsequence, which will be essential for our proof of Theorem~\ref{thm:main}.

\begin{lemma}
\label{lem:main}
Let  $K\in\Kn$, $\Lambda \in \Lat^n$, and let 
\begin{equation*} 
S = \left\{j\in\{1,\dots,n-1\} :
  \rho_j(K,\Lambda)>\rho_n(K,\Lambda)\right\}\ne\emptyset. 
\end{equation*} 
Let $k^*=\max\{i: i\in S\}$  be the maximal element in $S$. 
Then, there exists an $L \in \grass(\Lambda,n-k^*)$ such that for all
$j\in S$ 
  \begin{equation*}
           \rho_j(K,\Lambda)=\rho_j(K|L^\perp,\Lambda|L^\perp).
         \end{equation*}
\end{lemma}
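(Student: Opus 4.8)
The plan is to reach the desired plane~$L$ by iterating the projection along a shortest vector, which is precisely the operation governed by Lemma~\ref{lem:firstvector}. Starting from $(K,\Lambda)$ I would build a flag of lattice planes $\{\vnull\}=M_0\subset M_1\subset\dots\subset M_{n-k^*}=:L$ with $M_t\in\grass(\Lambda,t)$, where $M_{t+1}$ arises from $M_t$ by adjoining a lift of a shortest vector of the projected lattice $\Lambda|M_t^\perp$ with respect to the norm induced by $K_c|M_t^\perp$. Since orthogonal projections along nested lattice planes compose to a single projection, writing $K^{(t)}:=K|M_t^\perp$ and $\Lambda^{(t)}:=\Lambda|M_t^\perp$ (of dimension $n-t$), the claim to establish is that $L=M_{n-k^*}$ satisfies $\rho_j(K,\Lambda)=\rho_j(K|L^\perp,\Lambda|L^\perp)$ for all $j\in S$.

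To steer the construction I would carry along, for $0\le t\le n-k^*-1$, two invariants: (A)~$\rho_j(K^{(t)},\Lambda^{(t)})=\rho_j(K,\Lambda)$ for every $j\in S$; and (B)~$\lambda_1(K^{(t)}_c,\Lambda^{(t)})\le\lambda_1(K_c,\Lambda)=\rho_n(K,\Lambda)$. Both hold trivially at $t=0$. Given (A) and (B) at stage~$t$, for each $j\in S$ we get $\rho_j(K^{(t)},\Lambda^{(t)})=\rho_j(K,\Lambda)>\rho_n(K,\Lambda)\ge\lambda_1(K^{(t)}_c,\Lambda^{(t)})=\rho_{n-t}(K^{(t)},\Lambda^{(t)})$, which is exactly the hypothesis of Lemma~\ref{lem:firstvector} in dimension $n-t$ for the index~$j$ (note $j\le k^*\le n-t-1$) applied to the chosen shortest vector. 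The lemma then yields $\rho_j(K^{(t)},\Lambda^{(t)})=\rho_j(K^{(t+1)},\Lambda^{(t+1)})$, which propagates~(A). For invariant~(B) at stage $t+1$ I would invoke Lemma~\ref{lem:proj} for the projection of $(K,\Lambda)$ along the $(t+1)$-dimensional plane $M_{t+1}$ with $i=n-t-1$, giving $\lambda_1(K^{(t+1)}_c,\Lambda^{(t+1)})=\rho_{n-t-1}(K^{(t+1)},\Lambda^{(t+1)})\le\rho_{n-t-1}(K,\Lambda)$; since $t+1\le n-k^*-1$ forces $n-t-1>k^*=\max S$, the index $n-t-1$ lies outside~$S$, whence $\rho_{n-t-1}(K,\Lambda)\le\rho_n(K,\Lambda)=\lambda_1(K_c,\Lambda)$ and (B) follows.

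This last step---bounding the top packing minimum of a projected lattice against the \emph{original} $\lambda_1(K_c,\Lambda)$ rather than against its own shortest vector---is the crux, and I expect it to be the main obstacle. It is precisely what prevents the maximal index of the projected analogue of~$S$ from drifting above~$k^*$ (the packing minima are not monotone, so this genuinely needs proof) and thereby forcing the final projection plane to the wrong dimension. Running all $n-k^*$ steps, invariant~(A) preserves $\rho_j(K,\Lambda)$ for every $j\in S$, and $L=M_{n-k^*}\in\grass(\Lambda,n-k^*)$ is the desired plane. The remaining verifications are routine: each $M_{t+1}$ is again a lattice plane of dimension $t+1$, since the rank-$t$ lattice $M_t\cap\Lambda$ together with a lift of a nonzero vector of $\Lambda|M_t^\perp$ spans a sublattice of rank $t+1$; and the relevant projections compose as asserted.
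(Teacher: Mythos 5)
Your proposal is correct and takes essentially the same route as the paper: an iterated projection along shortest vectors of the successively projected lattices, where Lemma~\ref{lem:firstvector} preserves $\rho_j$ for $j\in S$, and Lemma~\ref{lem:proj} combined with the maximality of $k^*$ (so that the indices $n-t-1>k^*$ lie outside $S$) keeps the shortest vector of each projected lattice bounded by $\rho_n(K,\Lambda)$. Your invariants (A) and (B) are precisely what the paper tracks step by step before concluding that one ``may proceed iteratively.''
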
 

\begin{proof}
Let $\vv \in (\lambda_1(K_c,\Lambda) K_c) \cap \Lambda\setminus\{\vnull\}$.
According to Lemma~\ref{lem:proj} and Lemma~\ref{lem:firstvector}, we
have for $j\in\{1,\dots,n-1\}$ 
  \begin{equation}
    \begin{split} 
     \rho_j(K|\vv^\perp,\Lambda|\vv^\perp) & =
     \rho_j(K,\Lambda),\textrm{ if } j\in S,
    \\ \rho_j(K|\vv^\perp,\Lambda|\vv^\perp) &\leq \rho_j(K,\Lambda)\leq
    \rho_n(K,\Lambda),\textrm{ if } j\notin  S.
  \end{split}
\label{eq:onevector}  
  \end{equation}
  If $k^*=n-1$, we are done with $L = \lin\{\vv\}$.
  Thus, we assume that $k^*<n-1$.
  By~\eqref{eq:onevector} we have, with $K^\prime=K|\vv^\perp$ and $\Lambda^\prime=\Lambda|\vv^\perp$, that for every $j\in S$ 
\begin{equation*}
  \begin{split} 
  \rho_j(K^\prime,\Lambda^\prime) &=
  \rho_j(K,\Lambda) >\rho_n(K,\Lambda) \\ &\geq \rho_{n-1}(K,\Lambda)\geq
  \rho_{n-1}(K^\prime,\Lambda^\prime) 
   = \lambda_1(K^\prime_c,\Lambda^\prime).
 \end{split}  
\end{equation*}
We can therefore apply the same argumentation to $K^\prime$ and
$\Lambda^\prime$.
More precisely, with  $\vv^\prime \in
(\lambda_1(K^\prime_c,\Lambda^\prime) K^\prime_c) \cap
\Lambda^\prime\setminus\{\vnull\}$, we again get by
Lemma~\ref{lem:proj} and Lemma~\ref{lem:firstvector} that for
$j\in\{1,\dots,n-2\}$, 
 \begin{equation*}
    \begin{split} 
     \rho_j(K^\prime|(\vv^\prime)^\perp,\Lambda^\prime|(\vv^\prime)^\perp) & = \rho_j(K^\prime,\Lambda^\prime)=\rho_j(K,\Lambda),\textrm{ if } j\in S,
    \\
    \rho_j(K^\prime|(\vv^\prime)^\perp,\Lambda^\prime|(\vv^\prime)^\perp)
    &\leq
    \rho_j(K^\prime,\Lambda^\prime)\leq
    \rho_j(K,\Lambda)\leq
    \rho_n(K,\Lambda),\textrm{ if } j\notin S.
  \end{split}
  \end{equation*}
  If $k^*=n-2$, we are done with $L = \lin\{\vv, \vv^\prime\}$.
  Otherwise, we set $K''=K^\prime|(\vv^\prime)^\perp$ and $\Lambda''=\Lambda^\prime|(\vv^\prime)^\perp$.
  Then, for every $j\in S$
\begin{equation*}
  \begin{split} 
  \rho_j(K'',\Lambda'') &=
  \rho_j(K,\Lambda) >\rho_n(K,\Lambda) \\ &\geq \rho_{n-2}(K,\Lambda)\geq
  \rho_{n-2}(K'',\Lambda'') 
   = \lambda_1(K''_c,\Lambda'')
 \end{split}  
\end{equation*}  
and we may proceed iteratively.
\end{proof} 

\begin{rem} In view of Lemma \ref{lem:firstvector} i), 
the proof of Lemma~\ref{lem:main} also gives the following:
For $j\in S$, let $M_{n-j} \in \grass(\Lambda,n-j)$ be such that $\rho_j(K,\Lambda) = \lambda_1(K_c|M_{n-j}^\perp,\Lambda|M_{n-j}^\perp)$.
Then, the subspace $L$, iteratively constructed in the proof of the 
lemma,   satisfies  $L \subseteq M_{n-j}$. 
\end{rem}

We are now prepared to prove our main inequality regarding an upper
bound on the lattice
point enumerator in terms of the packing minima. Our main
result, i.e., Theorem \ref{thm:main}, will be an immediate consequence
of it.

\begin{theo}
\label{theo:main2} 
Let $K \in \Kn$ and $\Lambda \in \Lat^n$.
Further, let $0 = j_0 < j_1 < \ldots < j_m < j_{m+1} = n$ with $m \in
\Z_{\geq 0}$ be such that
  \begin{enumerate} 
  \item  $\rho_{j_1}(K,\Lambda) > \rho_{j_2}(K,\Lambda) > \ldots > \rho_{j_m}(K,\Lambda) > \rho_n(K,\Lambda)$, and
   \item $\rho_{j}(K,\Lambda) \leq \rho_{j_i}(K,\Lambda)$, for
     $j \in \{j_{i-1}+1,\ldots,j_i\}$ and $i \in \{1,\ldots,m+1\}$.
   \end{enumerate}
 In words, the $\rho_{j_i}(K,\Lambda)$, $i=1,\dots, m+1$ form a
 maximal strictly decreasing subsequence of $\rho_1(K,\Lambda),\ldots,\rho_n(K,\Lambda)$.
 Then, 
 \begin{equation*}
    \#(K\cap\Lambda)\leq \prod_{i=1}^{m+1} \left\lfloor\frac{1}{\rho_{j_i}(K,\Lambda)}+1\right\rfloor^{j_i-j_{i-1}}. 
  \end{equation*}
\end{theo}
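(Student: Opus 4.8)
The plan is to induct on the number $m$ of interior indices in the maximal strictly decreasing subsequence, keeping the dimension $n$ and the pair $(K,\Lambda)$ free. In the base case $m=0$ the hypotheses force $\rho_j(K,\Lambda)\le\rho_n(K,\Lambda)=\lambda_1(K_c,\Lambda)$ for every $j$, and the claimed bound reduces to $\#(K\cap\Lambda)\le\lfloor 1/\rho_n(K,\Lambda)+1\rfloor^n$, which is precisely the Betke--Henk--Wills estimate~\eqref{eq:disc-lambda1-bound}.

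For the inductive step I would first locate the projection to work with. Assuming $m\ge1$, the set $S=\{j:\rho_j(K,\Lambda)>\rho_n(K,\Lambda)\}$ is nonempty and its maximal element is $k^*=j_m$: indeed $j_m\in S$ by condition~(i), whereas any $j>j_m$ lies in the last block and thus satisfies $\rho_j(K,\Lambda)\le\rho_{j_{m+1}}(K,\Lambda)=\rho_n(K,\Lambda)$ by condition~(ii). Lemma~\ref{lem:main} then furnishes a lattice plane $L\in\grass(\Lambda,n-j_m)$ with $\rho_j(K,\Lambda)=\rho_j(K|L^\perp,\Lambda|L^\perp)$ for all $j\in S$, in particular for $j=j_1,\dots,j_m$.

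The heart of the argument is a fibrewise count along the orthogonal projection $\pi\colon\R^n\to L^\perp$. Set $K'=K|L^\perp$ and $\Lambda'=\Lambda|L^\perp$, a $j_m$-dimensional body and lattice. Every $\vx\in K\cap\Lambda$ projects into $K'\cap\Lambda'$, and the fibre over a fixed $\vy\in K'\cap\Lambda'$ is contained in the slice $K\cap(\vt+L)\cap\Lambda$ for any lift $\vt\in\Lambda$ of $\vy$. Lemma~\ref{lem:slice_points} bounds each slice by $\lfloor 1/\lambda_1(K_c,\Lambda)+1\rfloor^{n-j_m}=\lfloor 1/\rho_n(K,\Lambda)+1\rfloor^{j_{m+1}-j_m}$, so summing over the $\#(K'\cap\Lambda')$ nonempty fibres yields $\#(K\cap\Lambda)\le\#(K'\cap\Lambda')\cdot\lfloor 1/\rho_n(K,\Lambda)+1\rfloor^{j_{m+1}-j_m}$.

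It then remains to bound $\#(K'\cap\Lambda')$ by the induction hypothesis in dimension $j_m$, and this is where the bookkeeping must be done carefully. I would check that the indices $0<j_1<\dots<j_{m-1}<j_m$ form a maximal strictly decreasing subsequence for $(K',\Lambda')$: condition~(i) transfers because $\rho_{j_i}(K',\Lambda')=\rho_{j_i}(K,\Lambda)$ for $i\le m$ by Lemma~\ref{lem:main} and these values strictly decrease, while condition~(ii) follows by chaining $\rho_j(K',\Lambda')\le\rho_j(K,\Lambda)$ from Lemma~\ref{lem:proj} with condition~(ii) for $(K,\Lambda)$ and the same equality from Lemma~\ref{lem:main}. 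Since this reduced problem has only $m-1$ interior indices, the induction hypothesis gives $\#(K'\cap\Lambda')\le\prod_{i=1}^{m}\lfloor 1/\rho_{j_i}(K,\Lambda)+1\rfloor^{j_i-j_{i-1}}$, and multiplying by the slice factor closes the induction. The main obstacle is precisely this verification: one must be sure that projecting along $L$ preserves the distinguished minima indexed by $S$ exactly (via Lemma~\ref{lem:main}) while not increasing the others (via Lemma~\ref{lem:proj}), so that the reduced sequence has the identical block structure and values; the genuinely geometric input is already isolated in Lemma~\ref{lem:main} and Lemma~\ref{lem:slice_points}, leaving the fibre count routine.
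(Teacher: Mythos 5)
Your proposal is correct and follows essentially the same route as the paper's proof: base case via~\eqref{eq:disc-lambda1-bound}, then Lemma~\ref{lem:main} applied with $k^*=j_m$ to obtain the lattice plane~$L$, the fibrewise slice bound from Lemma~\ref{lem:slice_points}, and induction on~$m$ after checking via Lemma~\ref{lem:proj} that the projected sequence retains the same maximal strictly decreasing subsequence. Your write-up is in fact slightly more explicit than the paper's in two places where the paper is terse, namely in verifying that $\max S = j_m$ and in checking condition~(ii) for the reduced pair $(K|L^\perp,\Lambda|L^\perp)$.
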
 

\begin{proof}
If $m=0$, the statement follows directly from~\eqref{eq:disc-lambda1-bound}.
So let $m>0$.
Applying Lemma~\ref{lem:main} with $k^*=j_m$, gives an $L\in\grass(\Lambda,n-k^*)$ such that for every $i \in \{1,\ldots,m\}$
 \begin{equation}
    \rho_{j_i}(K,\Lambda)=\rho_{j_i}(K|L^\perp,\Lambda|L^\perp).
   \label{eq:7} 
 \end{equation}
Let $\vt \in \R^n$ be such that $\#(K \cap (\vt + L) \cap \Lambda)$ is maximal.
Then, an application of Lemma~\ref{lem:slice_points} gives us
 \begin{equation*}
   \begin{split} 
 \#(K\cap\Lambda) &\leq
 \#(K|L^\perp \cap \Lambda|L^\perp) \cdot \#(K \cap (\vt + L) \cap \Lambda)
 \\&\leq
 \#(K|L^\perp \cap \Lambda|L^\perp) \cdot \left\lfloor\frac{1}{\lambda_1(K_c,\Lambda)}+1\right\rfloor^{n-k^*}
 \\& =
 \#(K|L^\perp \cap \Lambda|L^\perp)\cdot \left\lfloor\frac{1}{\rho_{j_{m+1}}(K,\Lambda)}+1\right\rfloor^{j_{m+1}-j_m}.
\end{split} 
\end{equation*}
In view of Lemma \ref{lem:proj} and \eqref{eq:7}, the
sequence $\rho_{j_i}(K|L^\perp,\Lambda|L^\perp) =
\rho_{j_i}(K,\Lambda)$, with $i=1,\dots,m$, forms a maximal strictly decreasing
subsequence of $\rho_1(K|L^\perp,\Lambda|L^\perp),\ldots,\rho_{k^*}(K|L^\perp,\Lambda|L^\perp)$.
Hence, the assertion follows by induction on~$m$.
\end{proof}

Now the proof of Theorem \ref{thm:main} follows right from the
theorem above.
\begin{proof}[Proof of Theorem \ref{thm:main}] In view of property ii) in
  Theorem ~\ref{theo:main2}, it implies  
  \begin{equation*}
    \begin{split} 
    \#(K\cap\Lambda) \leq \prod_{i=1}^{m+1}
    \left\lfloor\frac{1}{\rho_{j_i}(K,\Lambda)}+1\right\rfloor^{j_i-j_{i-1}}
    \leq \prod_{i=1}^{n}
    \left\lfloor\frac{1}{\rho_{i}(K,\Lambda)}+1\right\rfloor.\qedhere
    \end{split}
  \end{equation*}
\end{proof}

\noindent We finish our discussion of the relationship between the number of lattice points and the packing minima with a proof of the lower bound on $\#(K \cap \Lambda)$ for planar convex bodies, stated in the introduction as Theorem~\ref{thm:discrete-lower-bound-plane}.

\begin{proof}[Proof of Theorem~\ref{thm:discrete-lower-bound-plane}]
We only discuss the details of the bound for arbitrary, that is, not necessarily origin-symmetric, convex bodies $K \in \mathcal{K}^2$.
If $K$ is origin-symmetric then a suitable adjustment of the argument below leads to the slightly improved bound stated in the theorem.

It suffices to prove the bound for $\Lambda = \Z^2$, and because $\rho_1(K,\Lambda) = \lambda_1(\dual{K_c},\dual{\Lambda})^{-1}$ and $\rho_2(K,\Lambda) = \lambda_1(K_c,\Lambda)$, we prove it in the formulation
\begin{align}
\#(K \cap \Z^2) &\geq \frac12 \left\lfloor \lambda_1(\dual{K_c},\Z^2)-2 \right\rfloor \left( \frac{1}{\lambda_1(K_c,\Z^2)} - 2 \right)-2.\label{eq:low-bound-rho-plane-gen}
\end{align}
First, write $\lambda_1 = \lambda_1(K_c,\Z^2)$, $\lambda_1^\star = \lambda_1(\dual{K_c},\Z^2)$, and let $\vv \in \lambda_1 \cdot K_c \cap \Z^2 \setminus \{\vnull\}$.
We apply a suitable unimodular transformation to assume that $\vv = \ve_1$.
Moreover, let $\vv_1,\vv_2 \in K$ be such that $\vv =
\lambda_1(\vv_1-\vv_2)$, and we translate~$K$ by the lattice vector $-\lfloor\ip{\vv_1}{\ve_2}\rfloor\ve_2$, so that (after the translation) the second coordinate of the points $\vv_1$ and $\vv_2$ lies in $[0,1)$.
We now fix the parameters
\begin{align*}
\gamma_2 &= \suk(K,\ve_2) = \max\{\ip{\vx}{\ve_2} : \vx \in K\},\\
\gamma_1 &= \suk(K,-\ve_2) = \max\{-\ip{\vx}{\ve_2} : \vx \in K\},\text{ and}\\
w &= \ip{\vv_1}{\ve_2} = \ip{\vv_2}{\ve_2} \in [0,1).
\end{align*}
Clearly, $\gamma_2 - w \geq 0$ and $\gamma_1 + w \geq 0$, and at most one of these inequalities can be tight as $K$ is two-dimensional.
Since $\lambda_1(\dual{K_c},\Z^2)$ is the lattice-width of~$K$ and from the definition of~$\vv$, we also have
\begin{align}
\gamma_1 + \gamma_2 \geq \lambda_1^\star \qquad \text{and} \qquad \|\vv_1-\vv_2\| = \frac{1}{\lambda_1}\|\vv\| = \frac{1}{\lambda_1}.\label{eq:eta-lambda-1-dual-gen}
\end{align}
Further, let $\vs,\vt \in K$ be such that $\gamma_2 = \ip{\vs}{\ve_2}$ and $\gamma_1 = \ip{\vt}{-\ve_2}$.
By convexity of~$K$, we then have $K \supseteq \conv\{\vv_1,\vv_2,\vs,\vt\} =: Q$, so that the number of lattice points in~$K$ can be estimated by
\begin{align}
\#(K \cap \Z^2) &\geq \#(Q \cap \Z^2) = \sum_{i=-\lfloor \gamma_1 \rfloor}^{\lfloor \gamma_2 \rfloor} \#(Q \cap (\R \ve_1 + i \ve_2) \cap \Z^2)\nonumber\\
&\geq \sum_{i=1}^{\lfloor \gamma_2 \rfloor} \left\lfloor \vol_1(Q \cap (\R \ve_1 + i \ve_2)) \right\rfloor + \sum_{j=0}^{\lfloor \gamma_1 \rfloor} \left\lfloor \vol_1(Q \cap (\R \ve_1 - j \ve_2)) \right\rfloor.\label{eq:s-t-estimate}
\end{align}
Here, $\vol_1(\cdot)$ denotes the one-dimensional volume.
Note that if either $\gamma_2 < 1$ or $\gamma_1 < 0$, then the corresponding sum would be empty and there is nothing to estimate.
So we assume in the sequel that both $\gamma_2 \geq 1$ and $\gamma_1 \geq 0$ hold.

We now use the intercept theorem for rays emanating from~$\vs$, respectively~$\vt$, and passing through $\vv_1$ and $\vv_2$, in order to obtain
\[
\vol_1(Q \cap (\R \ve_1 + i \ve_2)) = \frac{\gamma_2}{\gamma_2-w} \frac{1}{\lambda_1}\left(1-\frac{i}{\gamma_2}\right) = \frac{1}{\lambda_1}\frac{\gamma_2-i}{\gamma_2-w}, \text{ for } 1 \leq i \leq \lfloor \gamma_2 \rfloor,
\]
and respectively
\[
\vol_1(Q \cap (\R \ve_1 - j \ve_2)) = \frac{\gamma_1}{\gamma_1+w} \frac{1}{\lambda_1}\left(1-\frac{j}{\gamma_1}\right) = \frac{1}{\lambda_1}\frac{\gamma_1-j}{\gamma_1+w}, \text{ for } 0 \leq j \leq \lfloor \gamma_1 \rfloor.
\]
Taking into account that $0 \leq w \leq 1$, we continue the estimate~\eqref{eq:s-t-estimate} by
\begin{align*}
\#(K \cap \Z^2)\phantom{\hspace{-30pt}}&\\
&\geq \sum_{i=1}^{\lfloor \gamma_2 \rfloor} \left\lfloor \frac{1}{\lambda_1}\frac{\gamma_2-i}{\gamma_2-w} \right\rfloor + \sum_{j=0}^{\lfloor \gamma_1 \rfloor} \left\lfloor \frac{1}{\lambda_1}\frac{\gamma_1-j}{\gamma_1+w} \right\rfloor\\
&\geq \sum_{i=1}^{\lfloor \gamma_2 \rfloor} \left( \frac{1}{\lambda_1}\frac{\gamma_2-i}{\gamma_2-w} - 1 \right) + \sum_{j=0}^{\lfloor \gamma_1 \rfloor} \left( \frac{1}{\lambda_1}\frac{\gamma_1-j}{\gamma_1+w} - 1 \right)\\
&= \frac{\lfloor \gamma_2 \rfloor(2 \gamma_2 - \lfloor \gamma_2 \rfloor - 1)}{2 \lambda_1 (\gamma_2-w)} - \lfloor \gamma_2 \rfloor + \frac{\lfloor \gamma_1 +1\rfloor(2\gamma_1 - \lfloor \gamma_1 \rfloor)}{2 \lambda_1 (\gamma_1+w)} - \lfloor \gamma_1 +1\rfloor\\
&\geq \frac{\lfloor \gamma_2 \rfloor(\gamma_2-1)}{2\lambda_1(\gamma_2-w)} - \lfloor \gamma_2 \rfloor + \frac{\lfloor \gamma_1+1 \rfloor\gamma_1}{2\lambda_1(\gamma_1+w)} - \lfloor \gamma_1+1 \rfloor\\
&\geq \frac{\lfloor \gamma_2 \rfloor(\gamma_2-1)}{2\lambda_1\gamma_2} - \lfloor \gamma_2 \rfloor + \frac{\lfloor \gamma_1+1 \rfloor\gamma_1}{2\lambda_1(\gamma_1+1)} - \lfloor \gamma_1+1 \rfloor\\
&= \left(\frac{1}{2\lambda_1}-1\right)\left(\lfloor\gamma_2\rfloor+\lfloor\gamma_1\rfloor+1\right) - \frac{1}{2\lambda_1}\left(\frac{\lfloor\gamma_2\rfloor}{\gamma_2} + \frac{\lfloor\gamma_1+1\rfloor}{\gamma_1+1}\right)\\
&\geq \left(\frac{1}{2\lambda_1}-1\right)\lfloor\gamma_2+\gamma_1\rfloor - \frac{1}{\lambda_1} \ \geq \ \left(\frac{1}{2\lambda_1}-1\right)\lfloor\lambda_1^\star\rfloor - \frac{1}{\lambda_1}\\
&= \frac12\left(\frac{1}{\lambda_1}-2\right)\lfloor \lambda_1^\star-2\rfloor - 2.
\end{align*}
In the penultimate line we also used the assumption $\lambda_1 = \rho_2(K,\Z^2) \leq\frac12$.
\end{proof}

\section{Lattices generated by an equiangular basis}
\label{sect:reg-simplex-lattice}

\noindent In this section, we exhibit examples illustrating the following claims:

\begin{enumerate}[label=\alph*)]
 \item In contrast to the boxes $C(\vr)$ in~\eqref{eq:ex-boxes}, the sequence of packing minima is not necessarily decreasing.
 \item The computation of $\rho_j(K,\Lambda)$ is challenging, even for very structured examples of~$K$ and~$\Lambda$, and particular cases such as $j=n-1$.
 \item The packing minima should obey a strong transference theorem that refines the lower bound in~\eqref{eq:banaszczyk} (see Conjecture~\ref{conj:rho-transference}).
\end{enumerate}
We are interested in the packing minima of the Euclidean unit ball~$B_n = \{\vx \in \R^n : \|\vx\|\leq 1\}$ with respect to lattices that are generated by vectors of the same length, and such that any two of them span the same angle.
To this end, given $a > 0$ and $b \in \R$, we say that a basis $\{\vv_1,\ldots,\vv_n\}$ of $\R^n$ is \emph{$(a,b)$-equiangular} if
\begin{align}
\ip{\vv_k}{\vv_\ell} = \begin{cases} a & \text{for } k=\ell,\\ b & \text{for }k \neq \ell.\end{cases}\label{eq:lambda-ab-basis}
\end{align}
We denote the lattice that is generated by the vectors $\vv_1,\ldots,\vv_n$ by~$\Lambda_{a,b}$, and note that it is determined up to orthogonal transformations.
Since~$B_n$ is invariant under orthogonal transformations, the particular choice of~$\Lambda_{a,b}$ does not matter.
The lattices $\Lambda_{a,b}$ interpolate between two special lattices:
\begin{itemize}
 \item $\Lambda_{1,0}$ is orthogonally equivalent to the standard lattice $\Z^n$, and
 \item $\Lambda_{1,\frac12}$ is generated by the edge-directions of a regular simplex of edge-length one.
\end{itemize}
Although the most natural example to consider is the \emph{regular simplex lattice}~$\Lambda_{1,\frac12}$, we choose to investigate the parameterized lattices $\Lambda_{a,b}$ because this allows to deal more uniformly with projections and polarity.
Before we can go into the details, we need two preparatory lemmas.
The first one just determines the polar lattice of~$\Lambda_{a,b}$.

\begin{lemma}
\label{lem:props_lambda-ab}
Let $G_{a,b} = (\ip{\vv_k}{\vv_\ell})_{k,\ell}$ be the
Gram  matrix of an $(a,b)$-equi\-angular basis $\{\vv_1,\ldots,\vv_n\}$ and let $\Lambda_{a,b}$ be the corresponding lattice.
Then, $G_{a,b}^{-1}$ is the Gram matrix of the dual basis
$\{\bar\vv_1,\ldots,\bar\vv_n\}$ of~$\dual{\Lambda_{a,b}}$ and
it holds
\[
G_{a,b}^{-1} = \frac{1}{a a' + (n-1) b b'} \, G_{a',b'} = G_{\bar a,\bar b},
\]
where $a' = a + (n-2)b$, $b' = -b$, $\bar a = \frac{a'}{a a' + (n-1) b b'}$ and $\bar b = \frac{b'}{a a' + (n-1) b b'}$.

In particular, we have $\dual{\Lambda_{a,b}} = \Lambda_{\bar a,\bar b}$.
\end{lemma}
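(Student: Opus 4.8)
The plan is to reduce the statement to an explicit inversion of a highly structured matrix, after invoking the standard relationship between the Gram matrix of a basis and that of its dual basis.

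First, I would record the general fact underlying the first assertion. Let $B \in \R^{n\times n}$ be the matrix whose columns are the basis vectors $\vv_1,\ldots,\vv_n$, so that $G_{a,b} = B^\trans B$. The dual basis $\bar\vv_1,\ldots,\bar\vv_n$ of $\dual{\Lambda_{a,b}}$ is characterized by $\ip{\bar\vv_k}{\vv_\ell} = \delta_{k\ell}$, which in matrix form means that the matrix $\bar B$ with columns $\bar\vv_k$ satisfies $\bar B^\trans B = I_n$, i.e.\ $\bar B = (B^\trans)^{-1}$. Consequently, the Gram matrix of the dual basis is
\[
(\ip{\bar\vv_k}{\bar\vv_\ell})_{k,\ell} = \bar B^\trans \bar B = B^{-1}(B^\trans)^{-1} = (B^\trans B)^{-1} = G_{a,b}^{-1},
\]
which proves the first claim and reduces everything to identifying $G_{a,b}^{-1}$.

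Next I would compute this inverse by exploiting the structure $G_{a,b} = (a-b)\,I_n + b\,J$, where $J$ denotes the all-ones matrix. Since $J$ has eigenvalue $n$ on the all-ones vector and eigenvalue $0$ on its orthogonal complement, the inverse has the same form $x\,I_n + y\,J$; solving the two resulting scalar equations gives $x = 1/(a-b)$ and $y = -b/[(a-b)(a+(n-1)b)]$. (Here invertibility is guaranteed because $G_{a,b}$ is the Gram matrix of a basis, hence positive definite, so $a-b>0$ and $a+(n-1)b>0$.) To match this with the claimed expression, I would observe that $a' - b' = a + (n-1)b$ and $b' = -b$, so $G_{a',b'} = (a+(n-1)b)\,I_n - b\,J$, and that the scalar factor factorizes as $aa' + (n-1)bb' = (a-b)(a+(n-1)b)$. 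A one-line comparison then shows $\tfrac{1}{aa'+(n-1)bb'}\,G_{a',b'} = x\,I_n + y\,J = G_{a,b}^{-1}$. Finally, since scaling a matrix of the form $G_{a',b'}$ by a constant $c$ yields $G_{ca',cb'}$, and $\bar a = ca'$, $\bar b = cb'$ with $c = 1/(aa'+(n-1)bb')$, this scaled matrix is exactly $G_{\bar a,\bar b}$.

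It remains to pass from Gram matrices back to lattices. As the dual basis $\bar\vv_1,\ldots,\bar\vv_n$ has Gram matrix $G_{\bar a,\bar b}$, it is by definition an $(\bar a,\bar b)$-equiangular basis; since a lattice is determined up to orthogonal transformation by the Gram matrix of a basis — a fact already noted in the text for $\Lambda_{a,b}$ — we conclude $\dual{\Lambda_{a,b}} = \Lambda_{\bar a,\bar b}$. I do not expect a genuine obstacle here: the argument is routine linear algebra, and the only points requiring care are the factorization $aa'+(n-1)bb' = (a-b)(a+(n-1)b)$ and the bookkeeping that identifies the scaled matrix $c\,G_{a',b'}$ with $G_{\bar a,\bar b}$ via the convention $c\,G_{a',b'} = G_{ca',cb'}$.
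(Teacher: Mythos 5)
Your proof is correct, and while it shares the paper's first step verbatim (the columns of $(B^\trans)^{-1}$ form the dual basis, so its Gram matrix is $G_{a,b}^{-1}$), the way you identify that inverse is genuinely different. The paper does not derive the inverse at all: it simply multiplies the \emph{claimed} matrix $G_{\bar a,\bar b}$ against $G_{a,b}$ and, exploiting the symmetric structure, checks two representative entries of the product (the diagonal entry equals $1$, an off-diagonal entry vanishes via $a'b+ab'+(n-2)bb'=0$). You instead decompose $G_{a,b}=(a-b)I_n+bJ$, use the spectral structure of the all-ones matrix $J$ to solve for the inverse in the form $xI_n+yJ$, and then match coefficients with $\frac{1}{aa'+(n-1)bb'}G_{a',b'}$ via the factorization $aa'+(n-1)bb'=(a-b)\bigl(a+(n-1)b\bigr)$. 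Your route is constructive rather than verificational, and it buys two things the paper's computation does not make explicit: the eigenvalues $a-b$ and $a+(n-1)b$ of $G_{a,b}$ (hence a clean justification of invertibility and of positivity of the scalar $aa'+(n-1)bb'$, which the paper never addresses), and the factorization itself, which reappears implicitly in the computations of Corollary~\ref{cor:Lambda-ab-bounds}. The paper's verification is shorter but assumes the answer; yours explains where $a'$, $b'$ and the normalizing constant come from. Both are complete; your closing remark that a positive multiple $c\,G_{a',b'}$ equals $G_{ca',cb'}$ and that the lattice is determined up to orthogonal transformations correctly handles the passage back to $\dual{\Lambda_{a,b}}=\Lambda_{\bar a,\bar b}$, which the paper leaves implicit.
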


\begin{proof}
Let us write $V = (\vv_1,\ldots,\vv_n) \in \R^{n \times n}$, so that $\Lambda_{a,b} = V \Z^n$.
Then, by the definition of the polar lattice, we have that $\dual{\Lambda_{a,b}} = V^{-\intercal} \Z^n$, and the columns of $V^{-\intercal}$ are the dual basis vectors~$\bar\vv_i$, $1 \leq i \leq n$.
The corresponding Gram matrix is thus given as
\[
(V^{-\intercal})^\intercal V^{-\intercal} = (V^\intercal V)^{-1} = G_{a,b}^{-1}.
\]
In order to see that $G_{\bar a,\bar b} = G_{a,b}^{-1}$, we first note that due to the symmetry of these matrices, we only need to check that the upper left entry of $G_{\bar a,\bar b} \cdot G_{a,b}$ equals $1$ and that the second entry in the first row of this matrix vanishes.
Indeed, the upper left entry equals
\[
\ip{(\bar a,\bar b,\ldots,\bar b)}{(a,b,\ldots,b)} = \frac{aa'}{aa'+(n-1)bb'} + \frac{(n-1)bb'}{aa'+(n-1)bb'} = 1,
\]
whereas the second entry in the first row equals
\[
\ip{(\bar a,\bar b,\ldots,\bar b)}{(b,a,b,\ldots,b)} = \frac{a'b+ab'+(n-2)bb'}{aa'+(n-1)bb'} = 0,
\]
as claimed.
\end{proof}

The second lemma is just a technical auxiliary bound needed below.

\begin{lemma}
\label{lem:integer-beta-inequality}
Let $(\beta_1,\ldots,\beta_n) \in \Z^n \setminus \{\vnull\}$.
Then,
\[
2 \cdot \sum_{1 \leq k < \ell \leq n} \beta_k \beta_\ell \leq n \cdot \Big(\sum_{k=1}^n \beta_k^2 - 1\Big).
\]
\end{lemma}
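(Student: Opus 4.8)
The plan is to recast the bilinear expression on the left as a difference of symmetric functions and reduce the claim to an inequality about sums of squared differences. Writing $S_1 = \sum_{k=1}^n\beta_k$ and $S_2 = \sum_{k=1}^n\beta_k^2$, the elementary identity $2\sum_{1\le k<\ell\le n}\beta_k\beta_\ell = S_1^2 - S_2$ shows that the desired bound $2\sum_{k<\ell}\beta_k\beta_\ell \le n(S_2-1)$ is equivalent to
\[
(n+1)\,S_2 - S_1^2 \ge n.
\]
So after this first reduction it remains only to bound $(n+1)S_2 - S_1^2$ from below by $n$ for every nonzero integer vector.

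The main idea I would use is that this quantity is secretly a sum of squared pairwise differences. Appending a zero coordinate, I set $\gamma = (\beta_1,\dots,\beta_n,0)\in\Z^{n+1}$ and invoke the standard identity $\sum_{1\le k<\ell\le m}(x_k-x_\ell)^2 = m\sum_k x_k^2 - \big(\sum_k x_k\big)^2$ with $m=n+1$; since the extra zero coordinate affects neither $S_1$ nor $S_2$, this yields
\[
\sum_{1\le k<\ell\le n+1}(\gamma_k-\gamma_\ell)^2 = (n+1)\,S_2 - S_1^2.
\]
The problem thus becomes purely combinatorial: prove that the $n+1$ integers $\gamma_k$ satisfy $\sum_{k<\ell}(\gamma_k-\gamma_\ell)^2\ge n$. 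Crucially, the hypothesis $(\beta_1,\dots,\beta_n)\neq\vnull$ guarantees that $\gamma$ is non-constant.

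The last step exploits integrality. Each pair with $\gamma_k\neq\gamma_\ell$ contributes at least $1$, so the sum is bounded below by the number of unequal pairs. Grouping indices by the value of $\gamma_k$ into classes of sizes $n_1,\dots,n_t$ with $t\ge 2$, the number of unequal pairs equals $\binom{n+1}{2}-\sum_i\binom{n_i}{2}$, and since $\binom{a+b}{2}\ge\binom{a}{2}+\binom{b}{2}$ the sum $\sum_i\binom{n_i}{2}$ is largest for the extreme split $\{n,1\}$, giving $\sum_i\binom{n_i}{2}\le\binom{n}{2}$. Hence the number of unequal pairs is at least $\binom{n+1}{2}-\binom{n}{2}=n$, as required. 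I do not anticipate a genuine obstacle; the only delicate point is the combinatorial maximization of $\sum_i\binom{n_i}{2}$ over partitions into at least two parts, which the convexity (or merging) argument settles, and the bound is seen to be sharp at the all-ones vector $\beta=(1,\dots,1)$, consistent with the equality case $\gamma=(1,\dots,1,0)$.
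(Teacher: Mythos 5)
Your proof is correct, and it is a genuine (if close) variant of the paper's argument rather than a copy of it. Both proofs pivot on the same identity: the paper rewrites the claim as $\sum_{1\le k<\ell\le n}(\beta_k-\beta_\ell)^2+\sum_{k=1}^n\beta_k^2\ge n$, and your augmented vector $\gamma=(\beta_1,\dots,\beta_n,0)$ packages exactly this quantity as a single sum of squared differences, since the pairs involving the appended zero contribute precisely $\sum_k\beta_k^2$. Where the two proofs part ways is in the closing combinatorial step. The paper exploits the zero/nonzero structure of $\beta$ directly: with $j\ge 1$ nonzero entries, the cross pairs give at least $j(n-j)$ and the squares give at least $j$, so the total is $j(n-j)+j=n+(j-1)(n-j)\ge n$ --- a two-line computation. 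You instead prove the more general statement that any non-constant integer vector of length $n+1$ has at least $n$ unequal pairs, by partitioning indices into value classes and maximizing $\sum_i\binom{n_i}{2}$ over partitions with at least two parts via convexity/merging, the extremal split being $\{n,1\}$. Your route is slightly longer but buys a cleaner structural statement (it does not care which value is zero), and it makes the equality case $\beta=(1,\dots,1)$, i.e.\ $\gamma=(1,\dots,1,0)$ with exactly $n$ unequal pairs each of difference $1$, completely transparent; the paper's split is more economical but yields the same bound only through the specific role played by the zero coordinates.
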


\begin{proof}
First of all elementary algebraic manipulations show that the claimed inequality is equivalent to
\[
\sum_{1 \leq k < \ell \leq n} (\beta_k - \beta_\ell)^2 + \sum_{k=1}^n \beta_k^2 \geq n.
\]
Now, after suitably relabeling the indices we may assume that $\beta_1,\ldots,\beta_j \neq 0$ and $\beta_{j+1}=\ldots=\beta_n=0$, for some $j \in \{1,\ldots,n\}$.
Then,
\[
\sum_{1 \leq k < \ell \leq n} (\beta_k - \beta_\ell)^2 + \sum_{k=1}^n \beta_k^2 \geq j(n-j) + j = n + (j-1)(n-j) \geq n,
\]
as desired.
\end{proof}

Let us now determine the successive minima of $B_n$ with respect to the lattice~$\Lambda_{a,b}$, and with respect to projections along lattice planes that are generated by a subset of the basis vectors $\vv_i$ in~\eqref{eq:lambda-ab-basis}.

\begin{proposition}
\label{prop:ex-ball-symmetric-lattice}
Let $a > 0$ and $b \in \R$ be such that
\[
a \geq 2 b \geq 0 \qquad \text{or} \qquad a \geq -n b \geq 0,
\]
and let $\{\vv_1,\ldots,\vv_n\}$ be an $(a,b)$-equiangular basis.
\begin{enumerate}[label=\roman*)]
 \item For every $i \in \{1,\ldots,n\}$, we have
 \[
 \lambda_i(B_n,\Lambda_{a,b}) = \sqrt{a}.
 \]
 \item\label{item:Lambda-ab-sucmins} For $j \in \{1,\ldots,n-1\}$, let $L_j = \lin\{\vv_1,\ldots,\vv_j\} \in \grass(\Lambda_{a,b},j)$.
 Then,
 \[
 \lambda_i(B_n|L_j^\perp,\Lambda_{a,b}|L_j^\perp) = \sqrt{\frac{(a-b)(a + b j)}{a + b(j-1)}},
 \]
 for every $i \in \{1,\ldots,n-j\}$.
\end{enumerate}
\end{proposition}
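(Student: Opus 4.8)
The plan is to reduce both statements to estimating the quadratic form given by the Gram matrix. Writing $V=(\vv_1,\dots,\vv_n)$, every vector of $\Lambda_{a,b}$ has the form $V\beta$ with $\beta=(\beta_1,\dots,\beta_n)\in\Z^n$, and its squared Euclidean length is $\beta^\top G_{a,b}\beta = a\sum_k\beta_k^2+2b\sum_{k<\ell}\beta_k\beta_\ell$. Since the orthogonal projection of $B_n$ onto any subspace is again a Euclidean ball of the same radius, and $B_n$ is invariant under orthogonal maps, in both parts we are merely computing the successive minima of a Euclidean ball with respect to an equiangular lattice; hence it suffices to control this quadratic form over $\Z^n\setminus\{\vnull\}$.

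For part~i), the choice $\beta=\pm\ve_k$ gives $\|V\beta\|^2=a$, so the basis vectors are $n$ linearly independent lattice vectors of length $\sqrt a$, whence $\lambda_n(B_n,\Lambda_{a,b})\le\sqrt a$. It remains to prove the matching lower bound, i.e.\ $\beta^\top G_{a,b}\beta\ge a$ for all $\beta\ne\vnull$. Put $s=\sum_k\beta_k^2\ge 1$ and $p=\sum_{k<\ell}\beta_k\beta_\ell$, so the claim reads $a(s-1)+2bp\ge 0$. If $s=1$ then $p=0$ and equality holds, so assume $s\ge 2$. For $b\ge 0$ I would use $2p\ge -s$ (from $(\sum_k\beta_k)^2\ge 0$), reducing to $a(s-1)\ge bs$, which follows from $a\ge 2b\ge bs/(s-1)$ since $s/(s-1)\le 2$. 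For $b\le 0$ I would instead invoke Lemma~\ref{lem:integer-beta-inequality} in the form $2p\le n(s-1)$, giving $2bp\ge bn(s-1)$ and hence $a(s-1)+2bp\ge (s-1)(a+nb)\ge 0$ by $a\ge -nb$. Together with the upper bound this forces $\lambda_1=\dots=\lambda_n=\sqrt a$.

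For part~ii) the decisive step is to recognize $\Lambda_{a,b}|L_j^\perp$ as itself equiangular. As $\vv_1,\dots,\vv_j\in L_j$ project to $\vnull$, the lattice $\Lambda_{a,b}|L_j^\perp$ is generated by the $n-j$ vectors $\vv_{j+1}|L_j^\perp,\dots,\vv_n|L_j^\perp$, which therefore form a basis. Solving the normal equations for the $L_j$-component and using the symmetry of the Gram matrix yields $\vv_\ell|L_j=\frac{b}{a+(j-1)b}\sum_{m=1}^{j}\vv_m$, the same for every $\ell>j$. Subtracting this common component and using the factorization $a^2+(j-1)ab-jb^2=(a-b)(a+jb)$, one finds that the projected vectors have Gram matrix of $(\tilde a,\tilde b)$-equiangular type with
\[
\tilde a=\frac{(a-b)(a+jb)}{a+(j-1)b},\qquad \tilde b=\frac{b(a-b)}{a+(j-1)b};
\]
note $\tilde a-\tilde b=a-b$ and that $\tilde a$ is exactly the expression under the square root in the statement.

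Finally I would verify that $(\tilde a,\tilde b)$ satisfies the hypothesis of the proposition in dimension $n-j$, so that part~i) applies to $\Lambda_{\tilde a,\tilde b}$ and the ball $B_n|L_j^\perp=B_{n-j}$, yielding successive minima all equal to $\sqrt{\tilde a}$. For this I would use the two identities
\[
\tilde a-2\tilde b=\frac{(a-b)(a+(j-2)b)}{a+(j-1)b},\qquad \tilde a+(n-j)\tilde b=\frac{(a-b)(a+nb)}{a+(j-1)b},
\]
together with $a-b>0$ and $a+(j-1)b>0$ (which follow from the hypotheses and $1\le j\le n-1$). In the case $a\ge 2b\ge 0$ one then checks $\tilde b\ge 0$ and $\tilde a-2\tilde b\ge 0$ (as $a+(j-2)b\ge 0$); in the case $a\ge -nb\ge 0$ one checks $\tilde b\le 0$ and $\tilde a+(n-j)\tilde b\ge 0$ (as $a+nb\ge 0$). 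The main obstacle is not any single hard estimate but precisely this last point: ensuring that equiangularity and the sign conditions are inherited by the projection, which is exactly what the two displayed factorizations make transparent.
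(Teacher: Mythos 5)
Your proposal is correct and follows essentially the same route as the paper: the lower bound in part~i) via the quadratic form $a\sum_k\beta_k^2+2b\sum_{k<\ell}\beta_k\beta_\ell$ split by the sign of $b$ (with Lemma~\ref{lem:integer-beta-inequality} handling $b\leq 0$), and part~ii) by identifying $\Lambda_{a,b}|L_j^\perp$ as $(\tilde a,\tilde b)$-equiangular with the same $\tilde a,\tilde b$, checking that the sign hypotheses are inherited, and invoking part~i) in dimension $n-j$. Your minor variations --- bounding $2p\geq -s$ directly instead of the paper's rewriting via $\bigl(\sum_k\beta_k\bigr)^2$, and computing the mixed Gram entries from the normal equations rather than from $\|\vw_k-\vw_\ell\|^2=\|\vv_k-\vv_\ell\|^2$ --- are equivalent and equally valid.
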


\begin{proof}
For i), we first observe that the upper bound $\lambda_i(B_n,\Lambda_{a,b}) \leq \sqrt{a}$ is immediate since the $\vv_i$ are linearly independent lattice vectors of length $\sqrt{a}$.
For the lower bound, due to the monotonicity of the successive minima, it suffices to show that $\lambda_1(B_n,\Lambda_{a,b}) \geq \sqrt{a}$, that is, $\|\vw\|^2 \geq a$, for every $\vw \in \Lambda_{a,b} \setminus \{\vnull\}$.
Any such vector can be written as $\vw = \beta_1 \vv_1 + \ldots + \beta_n \vv_n$, for some $\vbeta = (\beta_1,\ldots,\beta_n) \in \Z^n \setminus \{\vnull\}$.
In view of~\eqref{eq:lambda-ab-basis}, we thus claim that
\begin{align}
\|\vw\|^2 &= a \cdot \left(\sum_{k=1}^n \beta_k^2\right)  + 2b \cdot \left(\sum_{1 \leq k < \ell \leq n}\beta_k \beta_\ell\right) \geq a.\label{eq:w-length-bound}
\end{align}
\underline{Case 1:} $a \geq 2b \geq 0$.

If $\sum_{k < \ell}\beta_k \beta_\ell \geq 0$, then since $b \geq 0$ and $\sum_{k=1}^n \beta_k^2 \geq 1$ by the assumption that $\vbeta \neq \vnull$, the bound~\eqref{eq:w-length-bound} clearly holds.
If $\sum_{k < \ell} \beta_k \beta_\ell \leq -1$, then 
\begin{align*}
&\phantom{=\ }a \cdot \left(\sum_{k=1}^n \beta_k^2\right) + 2b \cdot \left(\sum_{1 \leq k < \ell \leq n}\beta_k \beta_\ell\right) \\
&=a \cdot \left(\sum_{k=1}^n \beta_k\right)^2 + 2(b-a) \cdot \left(\sum_{1 \leq k < \ell \leq n}\beta_k \beta_\ell\right) \\
&\geq a \cdot \left(\sum_{k=1}^n \beta_k\right)^2 - 2(b-a) \geq 2(a-b),
\end{align*}
which is at least~$a$, in view of $a \geq 2b$.

\noindent\underline{Case 2:} $a \geq -nb \geq 0$.

In order to prove~\eqref{eq:w-length-bound}, that is,
\[
a \cdot \left(\sum_{k=1}^n \beta_k^2 - 1\right)  + 2b \cdot \left(\sum_{1 \leq k < \ell \leq n}\beta_k \beta_\ell\right) \geq 0,
\]
we may use $a \geq -nb$ and $b \leq 0$, and see that it suffices to show
\[
2 \cdot \left(\sum_{1 \leq k < \ell \leq n} \beta_k \beta_\ell\right) \leq n \cdot \left(\sum_{k=1}^n \beta_k^2 - 1\right).
\]
This inequality is proven in Lemma~\ref{lem:integer-beta-inequality}.

For ii), we show that the projected lattice $\Lambda_{a,b}|L_j^\perp$ is an $(n-j)$-dimensional lattice of the form $\Lambda_{\tilde a,\tilde b} \in \Lat^{n-j}$, where
\[
\tilde a = \frac{(a-b)(a + b j)}{a + b(j-1)} \quad \text{and} \quad \tilde b = \frac{(a-b)b}{a + b(j-1)}.
\]
Moreover, elementary algebraic manipulations show that if $a \geq 2b \geq 0$, then $\tilde a \geq 2 \tilde b \geq 0$, and likewise, if $a \geq -nb \geq 0$, then $\tilde a \geq -(n-j) \tilde b \geq 0$.
Since $B_n|L_j^\perp$ is an $(n-j)$-dimensional Euclidean unit ball, we can therefore apply the claim in part~i) and get that indeed $\lambda_i(B_n|L_j^\perp,\Lambda_{a,b}|L_j^\perp) = \sqrt{\tilde a}$, for every $i \in \{1,\ldots,j\}$.

In order to see that $\Lambda_{a,b}|L_j^\perp = \Lambda_{\tilde a,\tilde b}$, we first note that, for $j+1 \leq k \leq n$,
\[
\vw_k := \vv_k | L_j^\perp = \vv_k - \eta_j (\vv_1+\ldots+\vv_j),
\]
where $\eta_j = \frac{b}{a+b(j-1)}$.
Indeed, for every $1 \leq i \leq j$, we have
\[
\ip{\vw_k}{\vv_i} = \ip{\vv_k}{\vv_i} - \eta_j (\ip{\vv_1}{\vv_i} + \ldots + \ip{\vv_j}{\vv_i}) = b - \eta_j (a+b(j-1)) = 0.
\]
Moreover, since $\{\vv_1,\ldots,\vv_n\}$ is a basis of $\Lambda_{a,b}$, the set $\{\vw_{j+1},\ldots,\vw_n\}$ is a basis of $\Lambda_{a,b} | L_j^\perp$.
The squared length of the vectors~$\vw_k$ can be computed by
\begin{align*}
\|\vw_k\|^2 &= \|\vv_k\|^2 - 2 \eta_j \ip{\vv_k}{\sum_{i=1}^j \vv_i} + \eta_j^2 \ip{\sum_{i=1}^j \vv_i}{\sum_{i=1}^j \vv_i}\\
&= a - 2 \eta_j b j + \eta_j^2\left( a j + 2 b \binom{j}{2} \right) = a - \eta_j b j = \frac{(a-b)(a + b j)}{a + b(j-1)} = \tilde a.
\end{align*}
For the mixed scalar products, we observe that for every $j+1 \leq k < \ell \leq n$,
\begin{align*}
\|\vw_k - \vw_\ell\|^2 &= \|\vv_k - \vv_\ell\|^2 = 2 (a - b), \text{ and}\\
\|\vw_k - \vw_\ell\|^2 &= \|\vw_k\|^2 + \|\vw_\ell\|^2 - 2 \ip{\vw_k}{\vw_\ell}  = 2 \, \|\vw_k\|^2 - 2 \ip{\vw_k}{\vw_\ell}.
\end{align*}
Thus,
\[
\ip{\vw_k}{\vw_\ell} = \|\vw_k\|^2 - (a-b) = \tilde a - (a - b) = \tilde b,
\]
as desired.
\end{proof}

We can now discuss consequences for the packing minima of~$B_n$ with respect to the lattices~$\Lambda_{a,b}$.
In particular, these lattices contain many examples for which the sequence of packing minima is not decreasing (compare the initial discussion for the boxes~$C(\vr)$ in the introduction).
Moreover, they provide evidence for the conjectured transference bound in
Conjecture~\ref{conj:rho-transference}.

\begin{corollary}
\label{cor:Lambda-ab-bounds}
Let $a > 0$ and $b \in \R$ be such that
\[
a \geq 2 b \geq 0 \qquad \text{or} \qquad a \geq -n b \geq 0.
\]
Then, for every $1 \leq j \leq n$, we have
\[
\rho_j(B_n,\Lambda_{a,b}) \geq \frac12 \sqrt{\frac{(a-b)(a+b(n-j))}{a+b(n-j-1)}},
\]
and
\[
\rho_j(B_n,\Lambda_{a,b}) \cdot \rho_{n-j+1}(\dual{(B_n)_c},\dual{\Lambda_{a,b}}) \geq \frac12.
\]
Moreover, if $b \neq 0$, then
\[
\rho_{n-1}(B_n,\Lambda_{a,b}) < \rho_n(B_n,\Lambda_{a,b}).
\]
\end{corollary}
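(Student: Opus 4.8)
The key simplification is that $B_n$ is origin-symmetric, so $(B_n)_c = 2B_n$ and, by~\eqref{eq:def-packing-minima} together with the homogeneity of $\lambda_1$,
\[
\rho_j(B_n,\Lambda) = \tfrac12\max\bigl\{\lambda_1(B_n|L^\perp,\Lambda|L^\perp) : L \in \grass(\Lambda,n-j)\bigr\}.
\]
For the \emph{lower bound} I would simply evaluate this maximum at one convenient lattice plane. For $1 \leq j \leq n-1$ take $L = \lin\{\vv_1,\ldots,\vv_{n-j}\} \in \grass(\Lambda_{a,b},n-j)$; then Proposition~\ref{prop:ex-ball-symmetric-lattice}~\ref{item:Lambda-ab-sucmins} (with its index $j$ replaced by $n-j$, and $i=1$) gives $\lambda_1(B_n|L^\perp,\Lambda_{a,b}|L^\perp) = \sqrt{(a-b)(a+b(n-j))/(a+b(n-j-1))}$, which is exactly twice the claimed bound. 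The case $j=n$ is covered by part~i), since $\rho_n(B_n,\Lambda_{a,b}) = \tfrac12\lambda_1(B_n,\Lambda_{a,b}) = \tfrac12\sqrt a$, and one checks that the stated formula specializes to this (the radicand becoming $(a-b)a/(a-b) = a$).

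\textbf{Transference bound.} Here I would first rewrite the polar factor. Since $\dual{(B_n)_c} = \tfrac12 B_n$ and $\dual{\Lambda_{a,b}} = \Lambda_{\bar a,\bar b}$ by Lemma~\ref{lem:props_lambda-ab}, homogeneity gives $\rho_{n-j+1}(\dual{(B_n)_c},\dual{\Lambda_{a,b}}) = 2\,\rho_{n-j+1}(B_n,\Lambda_{\bar a,\bar b})$. Before applying the already-proven first part of the corollary to this factor, one must check that $(\bar a,\bar b)$ again satisfies the hypotheses: using $\bar a = (a+(n-2)b)/D$ and $\bar b = -b/D$ with $D = (a-b)(a+(n-1)b)>0$, a short computation shows that $a\geq 2b\geq 0$ forces $\bar a \geq -n\bar b \geq 0$ (indeed $\bar a + n\bar b = (a-2b)/D$), whereas $a \geq -nb \geq 0$ forces $\bar a \geq 2\bar b \geq 0$ (indeed $\bar a - 2\bar b = (a+nb)/D$). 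Writing $\tilde a_k := (a-b)(a+bk)/(a+b(k-1))$, multiplying the two lower bounds then reduces the claim to $\tilde a_{n-j}(a,b)\cdot\tilde a_{j-1}(\bar a,\bar b) \geq 1$. Substituting the expressions for $\bar a,\bar b$ and simplifying, I expect this product to equal \emph{exactly} $1$, so that $\rho_j(B_n,\Lambda_{a,b})\cdot\rho_{n-j+1}(\dual{(B_n)_c},\dual{\Lambda_{a,b}}) \geq 2\cdot\tfrac12\sqrt{\tilde a_{n-j}(a,b)}\cdot\tfrac12\sqrt{\tilde a_{j-1}(\bar a,\bar b)} = \tfrac12$.

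\textbf{Non-monotonicity.} For $\rho_{n-1} < \rho_n$ I would bound $\rho_{n-1}$ from \emph{above}, which is where the genuine work lies. As $\rho_{n-1}(B_n,\Lambda_{a,b}) = \tfrac12\max_L\lambda_1(B_n|L^\perp,\Lambda_{a,b}|L^\perp)$ over lines $L \in \grass(\Lambda_{a,b},1)$ with the maximum attained (Lemma~\ref{lem:rho-i-attained}), while $\rho_n = \tfrac12\sqrt a$, it suffices to show $\lambda_1(B_n|L^\perp,\Lambda_{a,b}|L^\perp) < \sqrt a$ for \emph{every} such line. Fix a shortest lattice vector $\vu$ with $\|\vu\| = \sqrt a$. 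If $\vu \notin L$, then $\vu|L^\perp$ is a nonzero vector of $\Lambda_{a,b}|L^\perp$ of length strictly below $\sqrt a$, and we are done. If $\vu \in L$, pick a basis vector $\vv_i$ with $\ip{\vu}{\vv_i} \neq 0$ (possible since the $\vv_i$ span $\R^n$); should that $\vv_i$ be parallel to $\vu$, primitivity and $\|\vv_i\|=\|\vu\|$ force $\vu=\pm\vv_i$, and I would instead take $\vv_\ell$ with $\ell\neq i$, for which $\ip{\vu}{\vv_\ell}=\pm b\neq 0$ and $\vv_\ell\notin L$. In either case the chosen basis vector projects to a nonzero vector of squared length $a - \ip{\vu}{\vv_i}^2/a < a$, using $b\neq 0$. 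Hence $\rho_{n-1} < \rho_n$. The main obstacle is precisely this upper bound: unlike the lower bounds, it must rule out all lines simultaneously, and the case analysis must account for shortest vectors $\vu$ that need not lie among the $\vv_i$ (as can occur in the boundary case $a=2b$), which is exactly why the argument pivots through the spanning property of the basis rather than assuming $\vu$ is a basis vector.
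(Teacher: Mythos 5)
Your treatment of the first two statements matches the paper's own proof step for step: evaluate the maximum in \eqref{eq:def-packing-minima} at the coordinate lattice plane $\lin\{\vv_1,\ldots,\vv_{n-j}\}$ using Proposition~\ref{prop:ex-ball-symmetric-lattice}~\ref{item:Lambda-ab-sucmins}, then pass to $\dual{\Lambda_{a,b}} = \Lambda_{\bar a,\bar b}$ via Lemma~\ref{lem:props_lambda-ab}, check that the hypotheses swap between the two regimes, and multiply the two lower bounds. The one soft spot is that you only \emph{assert} that $\tilde a_{n-j}(a,b)\cdot \tilde a_{j-1}(\bar a,\bar b)=1$; the identity is true, and the verification is short (one has $a'-b' = a+(n-1)b$, $a'+b'(j-1) = a+(n-j-1)b$ and $a'+b'(j-2) = a+(n-j)b$, so both middle factors cancel and the product collapses to $(a-b)(a+(n-1)b)/(aa'+(n-1)bb') = 1$), but a proof must contain it — the paper does carry out exactly this computation.

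The third part has a genuine gap. You fix \emph{one} shortest vector $\vu$ and claim that whenever $\vu \notin L$, the projection $\vu|L^\perp$ has length \emph{strictly} below $\sqrt a$. That is false when $\vu$ is orthogonal to $L$: then $\vu|L^\perp = \vu$ has length exactly $\sqrt a$, and your first branch produces nothing. Such configurations do occur under the hypotheses with $b \neq 0$. For $n=2$, $a=2$, $b=1$, the vector $\vu = \vv_1-\vv_2$ is shortest (its squared length is $2a-2b = 2 = a$) and is orthogonal to $\vz = \vv_1+\vv_2$, since $\ip{\vv_1-\vv_2}{\vv_1+\vv_2} = \|\vv_1\|^2-\|\vv_2\|^2 = 0$; and even away from the boundary case, for $n=2$, $a=3$, $b=1$, the shortest vector $\vu=\vv_1$ is orthogonal to $\vz = \vv_1-3\vv_2$. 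So the difficulty is not only (as you suggest at the end) that shortest vectors need not be basis vectors, but that the shortest vector you fixed may be perpendicular to the line you are projecting along, in which case its projection certifies only $\lambda_1(B_n|L^\perp,\Lambda_{a,b}|L^\perp) \leq \sqrt a$, not the strict inequality you need.

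The repair is exactly what the paper does — and what your own second branch (the case $\vu \in L$) already does: quantify over the line $L = \lin\{\vz\}$ rather than over a fixed shortest vector, and show that among the basis vectors there is always some $\vv_k$ with $\vv_k \notin L$ \emph{and} $\ip{\vv_k}{\vz} \neq 0$. Indeed, if no $\vv_k$ is parallel to $\vz$, then some $\vv_k$ satisfies $\ip{\vv_k}{\vz} \neq 0$ because the basis spans $\R^n$; and if $\vz$ is parallel to some $\vv_i$, then any $\vv_\ell$ with $\ell \neq i$ works, since $\ip{\vv_\ell}{\vz}$ is a nonzero multiple of $\ip{\vv_\ell}{\vv_i} = b \neq 0$. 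That $\vv_k$ projects to a nonzero vector of squared length $a - \ip{\vv_k}{\vz}^2/\|\vz\|^2 < a$, and since the packing minima are attained (Lemma~\ref{lem:rho-i-attained}), this yields the strict inequality $\rho_{n-1}(B_n,\Lambda_{a,b}) < \sqrt a/2 = \rho_n(B_n,\Lambda_{a,b})$.
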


\begin{proof}
The first inequality is immediate from Proposition~\ref{prop:ex-ball-symmetric-lattice}~\ref{item:Lambda-ab-sucmins}, because by definition $\rho_j(B_n,\Lambda_{a,b}) \geq \lambda((B_n)_c|L^\perp,\Lambda_{a,b}|L^\perp) = \frac12 \lambda_1(B_n|L^\perp,\Lambda_{a,b}|L^\perp)$, for every $L \in \grass(\Lambda_{a,b},n-j)$.

For the second statement we use Lemma~\ref{lem:props_lambda-ab} and its notation to get $\dual{\Lambda_{a,b}} = \Lambda_{\bar a,\bar b}$.
Further, one quickly checks that $a \geq 2b \geq 0$ if and only if $\bar a \geq -n \bar b \geq 0$, and similarly, $a \geq -n b \geq 0$ if and only if $\bar a \geq 2 \bar b \geq 0$.
We can thus apply Proposition~\ref{prop:ex-ball-symmetric-lattice}~\ref{item:Lambda-ab-sucmins} as above and get the bound
\[
\rho_{n-j+1}(\dual{(B_n)_c},\dual{\Lambda_{a,b}})^2 = 4 \cdot \rho_{n-j+1}(B_n,\Lambda_{\bar a,\bar b})^2 \geq \frac{(\bar a - \bar b)(\bar a + \bar b(j-1))}{\bar a + \bar b(j-2)}.
\]
Together with the lower bound on $\rho_j(B_n,\Lambda_{a,b})$ and the definitions of $\bar a,\bar b, a'$ and $b'$ in Lemma~\ref{lem:props_lambda-ab}, we obtain the claimed estimate
\begin{align*}
&4 \cdot \rho_j(B_n,\Lambda_{a,b})^2 \cdot \rho_{n-j+1}(\dual{(B_n)_c},\dual{\Lambda_{a,b}})^2 \\
&\geq \frac{(a-b)(a+b(n-j))}{a+b(n-j-1)} \cdot \frac{1}{aa'+(n-1)bb'} \cdot \frac{(a' - b')(a' + b'(j-1))}{a' + b'(j-2)}\\
&= \frac{(a-b)(a+b(n-j))}{a+b(n-j-1)} \cdot \frac{(a + b(n-1))(a + b(n-j-1))}{(aa'+(n-1)bb')(a + b(n-j))}\\
&= \frac{(a-b)(a + b(n-1))}{aa'+(n-1)bb'} = \frac{(a-b)(a + b(n-1))}{a^2+ab(n-2)-b^2(n-1)} = 1.
\end{align*}
Now assume that $b \neq 0$.
We know that $\rho_n(B_n,\Lambda_{a,b}) = \lambda_1((B_n)_c,\Lambda_{a,b}) = \sqrt{a}/2$ and $\rho_{n-1}(B_n,\Lambda_{a,b}) = \max\{\lambda_1(B_n|\vw^\perp,\Lambda_{a,b}|\vw^\perp) : \vw \in \Lambda_{a,b} \setminus \{\vnull\}\}/2$.
Since the $\vv_1,\ldots,\vv_n$ are linearly independent and not pairwise orthogonal, for every $\vw \in \Lambda_{a,b} \setminus \{\vnull\}$, at least one of the projections $\vv_k|\vw^\perp$, $1 \leq k \leq n$, does not vanish and is shorter than $\vv_k$.
Hence, $\lambda_1(B_n|\vw^\perp,\Lambda_{a,b}|\vw^\perp) < \sqrt{a}$.
By Lemma~\ref{lem:rho-i-attained} the packing minima are attained, and thus we indeed get the strict inequality $\rho_{n-1}(B_n,\Lambda_{a,b}) < \sqrt{a}/2 = \rho_n(B_n,\Lambda_{a,b})$.
\end{proof}

We finish our considerations of the lattices $\Lambda_{a,b}$ by investigating the special case $\Lambda_n := \Lambda_{1,\frac12} \in \Lat^n$ of the regular simplex lattice a bit more in detail.
We determine $\rho_{n-1}(B_n,\Lambda_n)$ in small dimensions, and illustrate the complications that arise by computing the packing minima exactly in higher dimensions and for other indices.

Our general conjecture is that for the lattice $\Lambda_n$ the lower bounds on the packing minima in Corollary~\ref{cor:Lambda-ab-bounds} are actually attained, that is,
\begin{align}
\rho_j(B_n,\Lambda_n) = \frac12\sqrt{\frac{n-j+2}{2(n-j+1)}},\label{eq:rho-j-reg-simpl-conj}
\end{align}
for $1 \leq j \leq n$.
This means that $\rho_j(B_n,\Lambda_n)$ should be attained by the projection along $L_{n-j} = \lin\{\vv_1,\ldots,\vv_{n-j}\}$, where as usual $\{\vv_1,\ldots,\vv_n\}$ is a $(1,\frac12)$-equiangular basis.

We are able to confirm these speculations in a restricted setting:

\begin{proposition}
Let $\Lambda_n$ be the regular simplex lattice as above. Then,
\[
\rho_{n-1}(B_n,\Lambda_n) = \frac{\sqrt{3}}{4},\quad\text{ for } n \leq 7.
\]
\end{proposition}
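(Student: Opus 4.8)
The plan is to prove the two matching estimates $\rho_{n-1}(B_n,\Lambda_n)\ge \frac{\sqrt3}{4}$ and $\rho_{n-1}(B_n,\Lambda_n)\le \frac{\sqrt3}{4}$ separately. The lower bound requires no extra work: Corollary~\ref{cor:Lambda-ab-bounds} with $a=1$, $b=\tfrac12$ and $j=n-1$ gives $\rho_{n-1}(B_n,\Lambda_n)\ge \tfrac12\sqrt{(1-\tfrac12)(1+\tfrac12)}=\frac{\sqrt3}{4}$ in every dimension, so the whole content of the statement, and in particular the restriction $n\le 7$, lies in the reverse inequality. For the upper bound I first reduce to a shortest-vector problem. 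Since $(B_n)_c=2B_n$ and the lattice planes in $\grass(\Lambda_n,1)$ are exactly the lines $\R\vw$ spanned by primitive lattice vectors $\vw$, the representation~\eqref{eq:def-packing-minima} together with Lemma~\ref{lem:rho-i-attained} yields
\[
\rho_{n-1}(B_n,\Lambda_n)=\tfrac12\max\bigl\{\lambda_1(\Lambda_n|\vw^\perp):\vw\in\Lambda_n\setminus\{\vnull\}\text{ primitive}\bigr\},
\]
where $\lambda_1(\Lambda_n|\vw^\perp)$ is the length of a shortest nonzero vector of the projected lattice. Hence it suffices to show that for $n\le 7$ every such projection contains a nonzero vector of length at most $\tfrac{\sqrt3}{2}$; equivalently, that some $\vu\in\Lambda_n$ with $\vu\notin\R\vw$ satisfies $d(\vu,\R\vw)\le \tfrac{\sqrt3}{2}$.

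To make this explicit I would use the $A_n$-model of the lattice. Setting $\vv_i=\tfrac1{\sqrt2}(\ve_i-\ve_{n+1})$ realizes $\Lambda_n$ as $\tfrac1{\sqrt2}A_n$ inside the hyperplane $\{\vx\in\R^{n+1}:\sum_k x_k=0\}$, and its minimal vectors are precisely $\vu_{ij}=\tfrac1{\sqrt2}(\ve_i-\ve_j)$, each of length $1$. Writing a primitive $\vw=\tfrac1{\sqrt2}\tilde\vw$ with $\tilde\vw=(w_1,\dots,w_{n+1})\in A_n$, an elementary computation gives
\[
d(\vu_{ij},\R\vw)^2=1-\frac{(w_i-w_j)^2}{2\sum_k w_k^2}.
\]
Thus it is enough to find indices $i,j$ with $\vu_{ij}\not\parallel\vw$ and $(w_i-w_j)^2\ge \tfrac12\sum_k w_k^2$. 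Choosing $i,j$ to attain the largest coordinate $w_{\max}$ and the smallest coordinate $w_{\min}$, the inequality to establish reads $(w_{\max}-w_{\min})^2\ge \tfrac12\sum_k w_k^2$.

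This last inequality is where the bound $n\le 7$ enters and is the technical heart of the argument. It follows by combining $\sum_k w_k^2\le (n+1)\,w_{\max}(-w_{\min})$, obtained by summing the pointwise inequalities $(w_{\max}-w_k)(w_k-w_{\min})\ge0$ and using $\sum_k w_k=0$, with the AM--GM bound $(w_{\max}-w_{\min})^2\ge 4\,w_{\max}(-w_{\min})$; together these give $(w_{\max}-w_{\min})^2\ge \tfrac{4}{n+1}\sum_k w_k^2$, which is at least $\tfrac12\sum_k w_k^2$ exactly when $n+1\le 8$. The vector $\tilde\vw=(1,1,1,1,-1,-1,-1,-1)$ makes this tight for $n=7$, which also explains why minimal vectors no longer suffice once $n\ge 8$. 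The main obstacle, and the only delicate point, is the non-degeneracy condition $\vu_{ij}\not\parallel\vw$: the pair realizing $w_{\max}$ and $w_{\min}$ is parallel to $\vw$ precisely when $\vw$ is itself a minimal vector $\tfrac1{\sqrt2}(\ve_p-\ve_q)$, in which case I would instead pick any third index $r$ (available since $n\ge2$) and use $\vu_{pr}$, for which $(w_p-w_r)^2=1=\tfrac12\sum_k w_k^2$ and $\vu_{pr}\not\parallel\vw$, so that $d(\vu_{pr},\R\vw)^2=\tfrac34$. In all remaining cases $\vu_{ij}$ is automatically non-parallel to $\vw$. This shows $\lambda_1(\Lambda_n|\vw^\perp)\le\tfrac{\sqrt3}{2}$ for every primitive $\vw$, which yields the upper bound and, together with the first paragraph, the claimed equality.
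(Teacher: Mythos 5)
Your proposal is correct, and for the upper bound it takes a genuinely different route from the paper. Both proofs share the same skeleton: the lower bound via Corollary~\ref{cor:Lambda-ab-bounds}, the reduction of the upper bound to showing that along every lattice line some \emph{minimal} (unit-length) vector of $\Lambda_n$ projects to length at most $\sqrt{3}/2$, and the same treatment of the degenerate case where the natural candidate is parallel to the line (replace it by $\vv_k$, respectively by a third index, giving projected length exactly $\sqrt{3}/2$). Where you diverge is in how this claim is verified. The paper converts it into the statement that the polytope $P_n$ cut out by the minimal vectors has circumradius at most $2$, dualizes via Gritzmann--Klee to the inradius bound $\ir(S_n - S_n) \geq 1/2$ for the difference body of the regular simplex, and then invokes Rogers \& Shephard's description of the facets of $S_n-S_n$ and their heights $h_r$, checking $h_r \geq 1/2$ precisely for $n \leq 7$. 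You instead work in the explicit $A_n$-model $\Lambda_n \cong \tfrac{1}{\sqrt{2}}A_n$ and prove the required inequality $(w_{\max}-w_{\min})^2 \geq \tfrac12 \sum_k w_k^2$ by entirely elementary means: summing $(w_{\max}-w_k)(w_k - w_{\min}) \geq 0$ over $k$ and using $\sum_k w_k = 0$ gives $\sum_k w_k^2 \leq (n+1)\,w_{\max}(-w_{\min})$, and AM--GM then yields $(w_{\max}-w_{\min})^2 \geq \tfrac{4}{n+1}\sum_k w_k^2$, which suffices exactly when $n+1 \leq 8$ (I checked all steps, including the distance formula and the primitivity argument in the parallel case; they are sound). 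Your approach buys self-containedness — no external results on difference bodies or radius duality are needed, and the threshold $n \leq 7$ falls out of a one-line inequality with an explicit tight example $(1,1,1,1,-1,-1,-1,-1)$. The paper's approach buys structural insight: it identifies the extremal directions as the facet normals of $S_n-S_n$ (barycenter differences of opposite faces), which pinpoints exactly which lattice directions defeat minimal vectors once $n \geq 8$, something your inequality only signals implicitly through its failure.
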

\begin{proof}
The lower bound $\rho_{n-1}(B_n,\Lambda_n) \geq \sqrt{3}/4$ holds in every dimension~$n$ by Corollary~\ref{cor:Lambda-ab-bounds}.
For the upper bound, we invoke~\eqref{eq:def-packing-minima} and see that we have to show that $\lambda_1((B_n)_c|\vz^\perp,\Lambda_n|\vz^\perp) = \frac12\lambda_1(B_n|\vz^\perp,\Lambda_n|\vz^\perp) \leq \sqrt{3}/4$, for every $\vz \in \Lambda_n \setminus \{\vnull\}$.
In other words, we need to find for every $\vz \in \Lambda_n \setminus \{\vnull\}$ a non-zero vector in the projected lattice~$\Lambda_n|\vz^\perp$ whose squared length is at most $3/4$.
To this end, let $\vy \in \Lambda_n \setminus \{\vnull\}$ and let
\[
\vy' = \vy|\vz^\perp = \vy - \frac{\ip{\vy}{\vz}}{\ip{\vz}{\vz}} \vz.
\]
We thus want that $\|\vy'\|^2 = \|\vy\|^2 - \ip{\vy}{\vz}^2/\|\vz\|^2 \leq 3/4$.
Getting rid of the quadratic nature of this inequality, we choose the vector $\vy \in \Lambda_n$ to be of length one, that is,
\begin{align}
\vy \in \{\pm(\vv_k - \vv_\ell) : 1 \leq k < \ell \leq n\} \cup \{\pm\vv_1,\ldots,\pm\vv_n\} =: S(\Lambda_n).\label{eq:choice-of-y}
\end{align}
Then, for each $\vz \in \Lambda_n \setminus \{\vnull\}$, we need to find some~$\vy \in S(\Lambda_n)$ such that
\[
\ip{\vy}{\vz} \geq \frac12 \|\vz\|\quad\text{ and }\quad \vy \notin \lin\{\vz\}.
\]
The second condition guarantees that the projection~$\vy'$ is indeed non-zero.
However, if $\vy \in \lin\{\vz\}$, for some $\vy \in S(\Lambda_n)$, then, of course, $\vz$ is parallel to~$\vy$.
Let us first assume that $\vz = \vv_k$, for some $1 \leq k \leq n$.
Up to relabeling we are thus projecting along the line $L_1$ in Proposition~\ref{prop:ex-ball-symmetric-lattice}~\ref{item:Lambda-ab-sucmins} and get the desired bound.
If $\vz = \vv_k - \vv_\ell$ for some $k \neq \ell$, then for $\vy = \vv_k$, we have
\[
\ip{\vy}{\vz} = \ip{\vv_k}{\vv_k - \vv_\ell} = 1 - \frac12 = \frac12 \|\vz\|,
\]
so that we are fine as well.

Therefore, we can neglect the condition $\vy \notin\lin\{\vz\}$ and thus reduced the problem to the following:
\[
\text{For every } \vz \in \Lambda_n \setminus \{\vnull\} \text{ find } \vy \in S(\Lambda_n) \text{ such that} \ip{\vy}{\vz} \geq \frac12 \|\vz\|.
\]
Scaling $\vz$ to unit length, it thus suffices to prove that  the minimal $\mu \geq 0$ such that there is a $\vz \in \R^n$ with $\|\vz\|=1$ and $\ip{\vy}{\vz} \leq \mu$, for all $\vy \in S(\Lambda_n)$, equals $1/2$.
In other words, the origin-symmetric polytope
\[
P_n := \left\{\vx \in \R^n : \ip{\vx}{\vy} \leq 1, \text{ for all }\vy \in S(\Lambda_n)\right\}
\]
has circumradius $\cR(P_n) \leq 2$.
Gritzmann \& Klee~\cite{gritzmannklee1992inner} showed that for origin-symmetric convex bodies the circumradius is dual to the inradius, that is, $\ir(\dual{P_n})\cR(P_n)=1$.
Hence, the claim above is equivalent to $\ir(\dual{P_n}) \geq 1/2$.
By basic convex geometry we find that $\dual{P_n} = \conv(S(\Lambda_n))$, which is equal to the difference body $S_n-S_n$ of the regular simplex~$S_n \in \Kn$ of edge-length one.
Rogers \& Shephard~\cite{rogersshephard1957the} investigated $S_n-S_n$ in detail:
\begin{enumerate}[label=\roman*)]
 \item Every facet of $S_n-S_n$ is of the form $S_r - S_{n-r-1}$, for some $0 \leq r \leq n-1$, and where $S_r$ is an $r$-dimensional face of~$S_n$, and $S_{n-r-1}$ is its opposite face of dimension $n-r-1$.
 \item The height $h_r$ of such a facet $S_r - S_{n-r-1}$ over the origin~$\vnull$ is the distance of the barycenters of $S_r$ and $S_{n-r-1}$, and equals
 \begin{align}
 h_r = \sqrt{\frac{n+1}{2(r+1)(n-r)}}.\label{eq:height-hr}
 \end{align}
\end{enumerate}
Now, due to the regularity of~$S_n$ and $S_n-S_n$, the inradius $\ir(S_n-S_n)=\ir(\dual{P_n})$ equals the minimal such height~$h_r$, $0 \leq r \leq n-1$.
Since for $r=\lfloor n/2 \rfloor$ the value of~$h_r$ in~\eqref{eq:height-hr} grows as $\sqrt{2/n}$, we see that $\ir(\dual{P_n}) \geq 1/2$ is not true in large dimensions.
However, it does hold for every $n \leq 7$ and thus, summarizing our considerations above, we have
\[
\rho_{n-1}(B_n,\Lambda_n) = \frac{\sqrt{3}}{4},\quad\text{ for } n \leq 7.\qedhere
\]
\end{proof}

\noindent The arguments in the proof above are based on the strong assumption that we only consider vectors $\vy \in S(\Lambda_n)$.
So, for computing $\rho_{n-1}(B_n,\Lambda_n)$ in higher dimensions, we would need to project longer lattice vectors from~$\Lambda_n$.

\bibliographystyle{amsplain}
\bibliography{mybib}

\end{document}